\newtheorem{thm}{Theorem}[section]
\newtheorem{prop}[thm]{Proposition}
\newtheorem{lem}[thm]{Lemma}
\renewcommand{\le}{\leqslant}
\renewcommand{\ge}{\geqslant}
\renewcommand{\subset}{\subseteq}
\newcommand{\mcD}{\mathcal{D}}
\newcommand{\E}{\mathbb{E}}
\newcommand{\EEt}{\mathbf{E}^\tau}
\newcommand{\hX}{\hat{X}}
\newcommand{\htau}{\hat{\tau}}
\newcommand{\N}{\mathbb{N}}
\newcommand{\1}{\mathbf{1}}
\newcommand{\R}{\mathbb{R}}
\newcommand{\Z}{\mathbb{Z}}
\renewcommand{\P}{\mathbb{P}}
\newcommand{\PP}{\mathbf{P}}
\newcommand{\PPt}{\mathbf{P}^\tau}
\newcommand{\ov}{\overline}
\newcommand{\td}{\tilde}
\newcommand{\eps}{\varepsilon}
\def\d{{\mathrm{d}}}
\newcommand{\Hde}{H_\delta^{(\eps)}}
\newcommand{\Hdek}{H_\delta^{(\eps_k)}}
\newcommand{\Lde}{{L}_\delta^{(\eps)}}
\newcommand{\Hdo}{H_\delta}
\newcommand{\var}{\mathbb{V}\mathrm{ar}}
\newcommand{\de}{_\delta^{(\eps)}}
\newcommand{\dek}{_\delta^{(\eps_k)}}
\newcommand{\ek}{^{(\eps_k)}}
\newcommand{\e}{^{(\eps)}}
\newcommand{\mfk}{\mathfrak}
\title{Scaling limit of the random walk among random traps on $\Z^d$}
\author{Jean-Christophe Mourrat}
\address{Universit\'e de Provence, CMI, 39 rue Joliot Curie, 13013 Marseille, France ; PUC de Chile, Facultad de Matem\'aticas, Vicu\~na Mackenna 4860, Macul, Santiago, Chile.}
\begin{document}
\begin{abstract}
Attributing a positive value $\tau_x$ to each $x \in \Z^d$, we investigate a nearest-neighbour random walk which is reversible for the measure with weights $(\tau_x)$, often known as ``Bouchaud's trap model''. We assume that these weights are independent, identically distributed and non-integrable random variables (with polynomial tail), and that $d \ge 5$. We obtain the quenched subdiffusive scaling limit of the model, the limit being the fractional kinetics process. We begin our proof by expressing the random walk as a time change of a random walk among random conductances. We then focus on proving that the time change converges, under the annealed measure, to a stable subordinator. This is achieved using previous results concerning the mixing properties of the environment viewed by the time-changed random walk.
\end{abstract}
\maketitle
\section{Introduction}
In this paper, we consider a trap model, known as \emph{Bouchaud's trap model} or also the \emph{random walk among random traps}, evolving on the graph $\Z^d$. In this model, every site $x \in \Z^d$ represents a trap of a certain depth $\tau_x > 0$, and the dynamics is chosen in order to make the measure with weights $(\tau_x)_{x \in \Z^d}$ reversible. More precisely, for a fixed $a \in [0,1]$ and $\tau = (\tau_x)_{x \in \Z^d}$, we consider the continuous time Markov chain $(X_t)_{t \ge 0}$ whose jump rate from a site $x$ to a neighbour $y$ is 
\begin{equation}
\label{jumprate}
\frac{(\tau_y)^a}{(\tau_x)^{1-a}}.
\end{equation}
We write $\PPt_x$ for the law of this process starting from $x \in \Z^d$, $\EEt_x$ for its associated expectation. 
The environment $\tau$ is chosen according to some probability law $\P$ (with corresponding expectation $\E$). We focus here on the case when $(\tau_x)_{x \in \Z^d}$ are independent and identically distributed, and in the regime where there is some $\alpha \in (0,1)$ such that
\begin{equation}
\label{regvar}
\P[\tau_0 \ge y] \sim \frac{1}{y^\alpha} \qquad (y \to +\infty).
\end{equation}
In particular, the expectation of the depth of a trap is infinite. We also assume that $\tau_0 \ge 1$. For $\eps > 0$, we define the rescaled process $X\e(t) = \sqrt{\eps} X_{\eps^{1/\alpha} t}$, and call $J_1$ topology the usual Skorokhod's topology \cite[Chapter~3]{bill}. We will prove the following result.
\begin{thm}
\label{scaling1}
If $d \ge 5$, then for almost every environment, the law of $X\e$ under $\PPt_0$ converges, for the $J_1$ topology and as $\eps$ tends to $0$, to the law of $B \circ H^{-1}$, where $B$ is a Brownian motion, $H$ is an $\alpha$-stable subordinator, and $(B,H)$ are independent.
\end{thm}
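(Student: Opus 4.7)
The plan is to exploit the time-change representation of $X$. Defining the symmetric conductances $c_{xy} = (\tau_x \tau_y)^a$ on edges of $\Z^d$, a direct reversibility check identifies $X$ with the time-change
\[
X_t = \hX\bigl(S^{-1}(t)\bigr), \qquad S(t) = \int_0^t \tau_{\hX(s)} \, \d s,
\]
where $\hX$ is the Markov chain with jump rates $c_{xy}$ (reversible with respect to the counting measure). With this decomposition in hand, obtaining the scaling limit of $X$ reduces to a joint scaling limit for $\hX$ and $S$, combined via the continuous mapping theorem for right-continuous inverses and compositions.

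For the first ingredient, I would invoke a quenched invariance principle for $\hX$: the conductances $(\tau_x\tau_y)^a$ are bounded below, stationary and ergodic with heavy-tailed marginal, so $\sqrt\eps\,\hX(\cdot/\eps)$ converges under $\PPt_0$ to $\sigma B$ for some deterministic $\sigma>0$ and almost every $\tau$. This follows from standard corrector/homogenization methods for i.i.d.\ conductance models and does not require the restriction $d\ge 5$.

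The heart of the proof is the clock's convergence, at an $\eps^{1/\alpha}$ scale, to an $\alpha$-stable subordinator $H$. Working first under the annealed law $\E\otimes\PPt_0$, the environment viewed by $\hX$ is stationary and ergodic, and $S$ is an additive functional of the non-integrable stationary sequence $\tau_{\hX(s)}$. I would split the integrand at a threshold of order $\eps^{-1/\alpha}$: the bounded part is (essentially) integrable and contributes negligibly after rescaling, while the contribution from excursions of $\hX$ into ``deep traps'' $x$ with $\tau_x \gtrsim \eps^{-1/\alpha}$ is described by a marked point process of arrival times and trap depths that, in the limit, converges to a Poisson point process with intensity $\d t \otimes \alpha u^{-\alpha-1}\,\d u$. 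Summing the marks yields $H$. The requisite Poisson convergence of the deep-trap excursions rests on quantitative mixing estimates for the environment seen by $\hX$, established in prior work referenced in the abstract; it is precisely these bounds that force $d\ge 5$.

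Joint convergence $(\hX^{(\eps)}, S^{(\eps)}) \Rightarrow (\sigma B, H)$ with $B$ and $H$ independent comes from the observation that deep-trap excursions occupy asymptotically vanishing space-time volume while contributing bulk mass to $S$, so they decouple from the macroscopic displacement of $\hX$. The $J_1$ convergence $X^{(\eps)} \Rightarrow \sigma B \circ H^{-1}$ then follows by composition (using the strict increase of $H$ to control the inverse at limit points), and one absorbs $\sigma$ into $B$. To upgrade from annealed to quenched, I would use that the limit law does not depend on $\tau$: the quenched CLT for $\hX$ is already available, and for the clock it suffices to show that the conditional law of the rescaled deep-trap point process given $\tau$ concentrates on the annealed one, which again follows from the mixing bounds available in $d\ge 5$. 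I expect the main obstacle to be the combination of these mixing estimates with the heavy-tailed integrand to simultaneously control the truncated part, establish the Poisson limit of the deep-trap point process, and secure the asymptotic independence with $B$.
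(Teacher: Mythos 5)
Your architecture matches the paper's: time--change $X=\hX\circ A^{-1}$, quenched invariance principle for $\hX$ (cited from Barlow--Deuschel; note the conductances $(\tau_x\tau_y)^a$ are unbounded and \emph{not} i.i.d., so this is not quite ``standard i.i.d.\ homogenization''), annealed stable limit for the clock via deep traps, joint convergence with independence, annealed-to-quenched by variance decay, and finally continuity of the inverse and composition maps. However, there is a genuine gap at the heart of the clock argument. The increment of the clock contributed by a deep trap $x$ is not its depth $\tau_x$ but the product of $\tau_x$ with the total occupation time of $x$, and that occupation time is an exponential variable times the Green function $G(\theta_x\,\tau)$, which depends on the whole environment around the trap. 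To identify the limit of the clock you therefore need the limit law of the environment seen from a deep trap, and this is \emph{not} contained in the mixing results you invoke: the variance-decay estimate of the prior work only yields that any subsequential limit of the rescaled clock has independent increments, i.e.\ is a subordinator; it says nothing about the jump measure. The paper's central technical work is precisely this identification --- the environment around the deep trap has the size-biased law $\frac{1}{c\,\ov{G}(\tau)}\,\d\P(\tau)$ --- obtained by computing the expectation of an auxiliary truncated functional $L\de$ in two ways (once by stationarity of the environment viewed by the particle, once through the limiting subordinator representation), together with an independence statement between the depth, the exponential occupation variable, and the surrounding environment. Without this, your Poisson point process has unidentified marks, the Laplace exponent (which carries the nontrivial factor $\Gamma(\alpha+1)\,\E[\ov{G}(\tau)^{\alpha-1}]$) cannot be pinned down, and uniqueness of the subsequential limit --- hence convergence --- is not established.

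Two secondary points. First, your justification of the independence of $B$ and $H$ (``deep-trap excursions occupy vanishing space-time volume'') is a heuristic, not an argument; the paper instead shows, again via the variance decay, that any joint limit is a L\'evy process and then uses the uniqueness of the L\'evy--Khintchine decomposition into independent continuous and pure-jump parts, which must be $(B,0)$ and $(0,H)$. Second, the clock converges only in the $M_1$ topology (its approximants are continuous while the limit jumps), and the passage to $J_1$ convergence of $X\e$ goes through the continuity of $h\mapsto h^{-1}$ from $(D_{u,\uparrow\uparrow},M_1)$ to continuous functions with the uniform topology, followed by composition; you gesture at this correctly but it should be made explicit since a naive $J_1$ composition argument fails.
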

The limit process $B \circ H^{-1}$ appearing in Theorem~\ref{scaling1} is known as the \emph{fractional kinetics process} \cite{zas}. 

The proof of Theorem~\ref{scaling1} we present below can easily be adapted to cover also the Metropolis and heat bath dynamics, where one replaces the jump rates in (\ref{jumprate}) by, respectively~:
$$
\min\left(1 , \frac{\tau_y}{\tau_x}\right), \qquad \text{and} \qquad \left(1 + \frac{\tau_x}{\tau_y}  \right)^{-1}.
$$

In the case when $\E[\tau_0]$ is finite, the random walk $(X_t)_{t \ge 0}$ is diffusive under the averaged law, and converges to Brownian motion after rescaling \cite{masi}. Assumption~(\ref{regvar}), with $\alpha < 1$, brings us in the domain where the invariance principle breaks down.

The model investigated here has been considered on various graphs by physicists, as a simplified representation of the dynamics of glassy systems (see \cite{vhobc, bckm} for reviews). The first occurences concern the dynamics on the complete graph, with $a = 1$ in \cite{domin} and with $a = 0$ in \cite{bou}. The general dynamics, with $a \in [0,1]$, was considered in \cite{rmb}. The case when the underlying graph is $\Z^d$ has been studied in the physics litterature in \cite{mb, rmb, rmb01, bb}. 

The characteristic property of glassy systems is the phenomenon of aging. It is experimentally observed the following way. The glass is prepared by a fast cooling at time $t = 0$. After a time $t_w$, some experiment is performed, and a relaxation time is measured. It turns out that this relaxation time depends on the time $t_w$ that separates the instant of preparation and the experiment, on any accessible time scale. For example, one can observe the magnetic susceptibility of certain materials in presence of a small oscillating magnetic field of period $T$. It is observed that this magnetic susceptibility depends only on the ratio $T/t_w$ \cite{uppsala}. Macroscopic properties of the material thus depend on its ``age'' $t_w$.

On the mathematical side, the model attracted interest as well (see \cite{rev} for a review). In terms of the random walk, aging can be observed via two-time correlation functions, letting both times diverge to infinity. The limit obtained should be a non-trivial function of their ratio. One can derive such results from the existence of a scaling limit (see \cite[Theorem 5.1]{rev} for a simple example, and also \cite{fin, dim1, dim2, fm08}). 

In dimension $1$ and for $a = 0$, \cite{fin} obtained convergence of the rescaled process to a singular diffusion, on a subdiffusive scale. The result was extended to general $a \in [0,1]$ in \cite{dim1}. The multidimensional case was then considered. 

A fruitful approach to this problem is to introduce a time-change $(\hX_t)_{t \ge 0}$ of the initial process, in such a way that the counting measure becomes reversible for $\hX$. More precisely, we let $\hX$ follow the trajectory of $X$, but the time spent by $\hX$ at some site $x$ is the time spent by $X$ divided by $\tau_x$. For the walk $\hX$, the jump rate from a site $x$ to a neighbour $y$ is thus $(\tau_x \tau_y)^a$, which is symmetric. Letting
$$
A(t) = \int_0^{t} \tau_{\hX_s} \ \d s,
$$
we can rewrite $X$ as 
$$
X_t = \hX_{A^{-1}(t)}.
$$
We define the rescaled processes $\hX\e(t) = \sqrt{\eps} \hX_{\eps^{-1} t}$ and $H\e(t) = \eps^{1/\alpha} A(\eps^{-1} t)$. In order to prove Theorem~\ref{scaling1}, it is sufficient to prove the following result (see \cite[(3.3.4)]{whitt} for a definition of the $M_1$ topology).
\begin{thm}
\label{scaling2}
If $d \ge 5$, then for almost every environment, the joint law of $(\hX\e, H\e)$ under $\PPt_0$ converges, for the $J_1 \times M_1$ topology and as $\eps$ tends to $0$, to the law of $(B,H)$, where $B$ is a Brownian motion, $H$ is an $\alpha$-stable subordinator, and $(B,H)$ are independent.
\end{thm}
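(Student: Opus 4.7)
The plan is to split the proof into (i) a quenched invariance principle for $\hX\e$ and (ii) a scaling limit for the clock process $H\e$, and then to combine the two into the asserted joint convergence with independence.

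For step (i), the walk $\hX$ is a symmetric nearest-neighbour Markov chain on $\Z^d$ with conductances $c_{xy} = (\tau_x \tau_y)^a$, which are uniformly bounded below by $1$ thanks to $\tau_x \ge 1$. The standard theory of random walks among uniformly elliptic random conductances (via the environment viewed from the particle, or via heat-kernel and corrector methods) then yields, for $\P$-almost every $\tau$, that $\hX\e$ converges under $\PPt_0$, in the $J_1$ topology, to a Brownian motion $B$ with a deterministic nondegenerate covariance. This part of the argument uses neither the heavy-tail assumption (\ref{regvar}) nor the dimension restriction.

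For step (ii), I would study $H\e(t) = \eps^{1/\alpha} \int_0^{\eps^{-1} t} \tau_{\hX_s}\,\d s$ as a sum of contributions coming from sites visited by $\hX$ in well-separated time blocks. The strategy is to use the mixing properties of the environment viewed from $\hX$ (alluded to in the abstract) to replace the block contributions by essentially iid random variables with the right marginal law, and then to check that this marginal law lies in the domain of attraction of an $\alpha$-stable law, so that the rescaled sum converges to an $\alpha$-stable subordinator $H$. This is naturally established under the annealed measure $\int \PPt_0 \, \d\P$, and then upgraded to a $\P$-almost sure statement by ergodicity. The assumption $d \ge 5$ should enter precisely here: enough transience, quantified through Green-function estimates in the random environment, is needed to guarantee that the dominant contributions to $H\e$ come from essentially disjoint, and therefore asymptotically independent, excursions of $\hX$.

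For the joint convergence, the structural point is that $B$ arises from the diffusive spread of $\hX\e$ while $H$ is built from the exceptionally large values of $\tau$ encountered along the way. Since $\hX$ accumulates only $O(1)$ local time at any typical site before drifting away on the diffusive scale, the deep traps contributing to $H$ are localised on a spatially sparse random set whose influence on the displacement of $\hX\e$ is negligible. I would formalise this by conditioning on the rescaled positions of the deep traps responsible for $H$ and checking that, asymptotically, the increments of $\hX\e$ on the complement of these positions are independent of those traps, which gives independence of $B$ and $H$ in the limit.

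The main obstacle is step (ii): proving the stable-subordinator convergence of $H\e$ under the annealed measure and upgrading it, on $\P$-almost every $\tau$, to a statement compatible with the quenched limit of $\hX\e$. The blocking argument requires quantitative control of the speed of convergence to equilibrium of the environment viewed from $\hX$, uniform over typical realisations of $\tau$; this is where the previous mixing results cited in the abstract, together with the transience guaranteed by $d \ge 5$, should carry most of the weight.
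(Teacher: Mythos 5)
Your outline reproduces the correct architecture (marginal limits for $\hX\e$ and $H\e$, then joint convergence with independence, established first under the annealed measure), but it has several genuine gaps. First, the invariance principle for $\hX\e$ is not covered by ``standard theory of uniformly elliptic random conductances'': the conductances $(\tau_x\tau_y)^a$ are bounded below by $1$ but are \emph{unbounded above} (this is exactly the heavy-tail regime with $a\neq 0$), so uniform ellipticity fails on one side and one needs the quenched invariance principle for unbounded conductances of Barlow--Deuschel, which the paper simply cites. Second, and more seriously, ``upgraded to a $\P$-almost sure statement by ergodicity'' is not a proof step: ergodicity of the environment viewed by the particle does not convert annealed convergence in law into quenched convergence in law. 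The paper uses the Bolthausen--Sznitman concentration scheme: one shows that the $\P$-variance of $\EEt_0[F(H\e)]$ (for Laplace-type functionals $F$ of increments on intervals bounded away from $0$) decays \emph{polynomially} in $\eps$ via the variance-decay estimate of Theorem~\ref{gtrajtau}, applies Borel--Cantelli along a geometric subsequence $\eps=\mu^n$, and then interpolates using monotonicity of $H\e$; a separate small-time estimate (Proposition~\ref{Hpres0}) is needed because the variance bound degenerates near $t=0$. None of this is ``ergodicity''.

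Third, your step (ii) hides the hardest part of the whole proof inside the phrase ``check that this marginal law lies in the domain of attraction of an $\alpha$-stable law''. The contribution of a deep trap to $H\e$ is (depth)$\times$(total local time), and the local time is an exponential variable with mean the Green function $G$ of $\hX$ at that trap, which depends on the surrounding random environment; moreover the environment seen from a \emph{discovered} deep trap is not distributed as $\P$ but is size-biased by $\ov{G}(\tau)^{-1}$ (Proposition~\ref{lawtaudelta}), which is how the constant $\E[\ov{G}(\tau)^{\alpha-1}]$ enters the Laplace exponent~(\ref{psilim}). The paper identifies this law by a test-function argument comparing two computations of $\ov{\E}[L\de(t)]$; your blocking scheme gives no mechanism for this identification, and without it the limit subordinator is not pinned down (note also that the paper does not use time blocks at all: it proves that any limit point is a subordinator directly from Theorem~\ref{gtrajtau} and then identifies its L\'evy measure). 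Finally, for the independence of $B$ and $H$ your proposed conditioning on the locations of the deep traps would be delicate to execute; the paper's route is softer and cleaner: the same variance-decay argument shows the joint limit $(B,H)$ is a L\'evy process, and the L\'evy--It\^o/L\'evy--Khintchine decomposition into a continuous part and a pure-jump part forces $B$ and $H$ to be independent.
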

Theorem~\ref{scaling2} was obtained in~\cite{dim2} in the case when $a = 0$, in any dimension $d \ge 2$ (with a different renormalisation when $d = 2$). For $a = 0$, the time-changed random walk $\hX$ is the simple random walk. The proof is based on a coarse-graining procedure introduced in \cite{bcm}, and relies on sharp heat kernel estimates for the simple random walk. 

Recently, \cite{barcer} managed to extend the method to cover general $a \in [0,1]$, for any $d \ge 3$. A preliminary step was to obtain sharp heat kernel estimates for the time-changed random walk $\hX$ \cite{bardeu}.

Hence, Theorem~\ref{scaling1} is not new. The interest of the present paper is that we will follow a radically different method of proof, that we believe to be more natural. The main tool needed here is that the time-changed random walk $\hX$ is ``transient enough'', in the sense that the environment viewed by $\hX$ is sufficiently mixing (see Theorem~\ref{gtrajtau}). Moreover, we will see that we can in fact focus our attention on a priori weaker statements in which one considers the law of the processes under the \emph{annealed} measure $\P \PPt_0$ (that we will now write $\ov{\P}$, with corresponding expectation $\ov{\E}$). Finally, in our method, intermediate statements do not involve a mesoscopic scale, but directly the limit objects, which are more simple. We mainly focus on the following statement.
\begin{prop}
\label{scaling3}
If $d \ge 5$, then the law of $H\e$ under the measure $\ov{\P}$ converges, for the $M_1$ topology and as $\eps$ tends to $0$, to the law of an $\alpha$-stable subordinator.
\end{prop}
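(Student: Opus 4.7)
The plan is to decompose the clock process at level $\delta \eps^{-1/\alpha}$ by setting
$$
\Hde(t) = \eps^{1/\alpha} \int_0^{\eps^{-1} t} \tau_{\hX_s} \, \1_{\tau_{\hX_s} > \delta \eps^{-1/\alpha}} \, \d s, \qquad \Lde(t) = H\e(t) - \Hde(t),
$$
and to prove that (i) the residual $\Lde$ is uniformly negligible as $\delta \to 0$, and (ii) for each fixed $\delta > 0$, $\Hde$ converges under $\ov{\P}$ to a L\'evy process $\Hdo$ whose L\'evy measure is proportional to $\alpha v^{-1-\alpha} \1_{v \ge \delta} \, \d v$. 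Sending $\delta \to 0$ in the resulting limit will then identify the limit of $H\e$ as an $\alpha$-stable subordinator.

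Step (i) is essentially a first-moment computation. Because $\hX$ has symmetric jump rates $(\tau_x\tau_y)^a$, the counting measure on $\Z^d$ is reversible for every quenched environment; combined with the translation invariance of $\P$, a standard argument shows that the environment viewed by $\hX$ is stationary under $\ov{\P}$ with marginal $\P$. This yields the identity
$$
\ov{\E}[\Lde(t)] = \eps^{1/\alpha - 1} t \cdot \E\left[\tau_0 \, \1_{\tau_0 \le \delta \eps^{-1/\alpha}}\right],
$$
and the regular variation in (\ref{regvar}) gives $\E[\tau_0 \1_{\tau_0 \le y}] \sim \frac{\alpha}{1-\alpha} y^{1-\alpha}$, so the right-hand side tends to $\frac{\alpha}{1-\alpha} t \, \delta^{1-\alpha}$ as $\eps \to 0$. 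Since $\alpha < 1$, this is small with $\delta$ uniformly in $\eps$; monotonicity of $\Lde$ together with Markov's inequality then upgrades this to uniform smallness in probability on any compact time interval.

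Step (ii) is the heart of the argument. Sites with $\tau_x > \delta \eps^{-1/\alpha}$ have density $\sim \delta^{-\alpha} \eps$ under $\P$, while the range of $\hX$ up to time $\eps^{-1} t$ is of order $\eps^{-1} t$ thanks to the transience of $\hX$ in the regime $d \ge 5$. Hence the expected number of distinct big traps visited before time $\eps^{-1} t$ is of order $\delta^{-\alpha} t$, and each contributes to $\Hde$ approximately $\eps^{1/\alpha} \tau_x \ell(x)$, where $\ell(x)$ is the total local time spent at $x$ (of order $1$ by transience). Heuristically, under $\ov{\P}$ the successive big traps encountered by $\hX$ form a marked Poisson process of rate $\propto \delta^{-\alpha}$, the heights $\eps^{1/\alpha}\tau_x$ converge to the Pareto law $\alpha\delta^\alpha v^{-1-\alpha} \d v$ on $[\delta,\infty)$, and the local times average out. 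My plan to realise this heuristic is to compute the annealed Laplace transform of $\Hde(t)$, invoking Theorem~\ref{gtrajtau} to decouple the excursions around distinct big traps; a similar decoupling across disjoint time intervals identifies the finite-dimensional distributions as those of a L\'evy process with the claimed jump measure.

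The main obstacle is precisely this mixing-based decoupling: the walk typically returns many times to each big trap before moving on, so Theorem~\ref{gtrajtau} has to be used both to separate visit clusters from one another and to identify the law of a single cluster in terms of local times of $\hX$ near a typical point. The choice of the $M_1$ topology is tailored to this picture~: the microscopic cluster of returns to a single trap collapses, in $M_1$, into one jump of the limit whose height equals the total contribution $\eps^{1/\alpha}\tau_x\ell(x)$, whereas the stronger $J_1$ topology would be disrupted by the intra-cluster oscillations. Once $\Hde \Rightarrow \Hdo$ is established for each $\delta > 0$, the uniform control of $\Lde$ from step (i) combined with the convergence of $\Hdo$ to an $\alpha$-stable subordinator as $\delta \to 0$ closes the proof.
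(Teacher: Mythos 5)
Your overall architecture coincides with the paper's: truncate at depth $\delta\eps^{-1/\alpha}$, kill the shallow part by a first-moment computation using the stationarity of the environment viewed by the particle (this matches the estimate (\ref{convunifHde}) in the paper, and your asymptotics for $\E[\tau_0\1_{\tau_0\le y}]$ are correct), use the $M_1$ topology to collapse the cluster of returns to a single deep trap into one jump, and let $\delta\to 0$ at the end. Step (i) is essentially complete.

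The genuine gap is in step (ii), and it is not a technicality. Theorem~\ref{gtrajtau} does give you what you call ``decoupling across disjoint time intervals'': this is exactly Proposition~\ref{indepincr}, and it shows that any limit point of $\Hde$ is a subordinator. But it does \emph{not} identify the jump distribution, and your phrase ``the local times average out'' hides the hardest part of the proof. The height of a single jump is $\eps^{1/\alpha}\tau_x\,\ell(x)$, where $\ell(x)$ is (on the event that the trap is actually hit) an exponential variable with mean $G(\theta_x\tau)$, the Green function in the environment seen from the trap. The law of that surrounding environment, sampled at the discovery of a deep trap, is \emph{not} $\P$: it is absolutely continuous with density proportional to $\ov{G}(\tau)^{-1}$ (Proposition~\ref{lawtaudelta}), i.e.\ size-biased by the escape probability, because traps with small Green function are discovered at a higher rate per unit time. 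This is why the limiting Laplace exponent (\ref{psilim}) carries the factor $\E[\ov{G}(\tau)^{\alpha-1}]$ rather than any naive average of local times. The paper establishes this law by a separate double-counting argument: it introduces the auxiliary functional $L\de$ of (\ref{defLde}) built from a test function $h$ not depending on $\tau_0$, computes $\ov\E[L\de(t)]$ once via stationarity (giving $t\delta^{-\alpha}\E[h(\tau)]$) and once via its subordinator representation (giving $tc\delta^{-\alpha}\ov\E[(\ov G h)(\tau_\delta(1))]$), and equates the two. Nothing in your proposal supplies a substitute for this identification, nor for the accompanying facts that the depth $\eps^{1/\alpha}\tau_{x\de(n)}$, the exponential mark $e\de(n)$ and the surrounding environment $\tau\de(n)$ are asymptotically independent (Proposition~\ref{trioindep}), and that the family of surrounding environments is tight (Proposition~\ref{tightenv}). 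As written, your computation of the annealed Laplace transform of $\Hde(t)$ would produce a jump measure with an unidentified (and, if one assumes the environment around the trap is $\P$-distributed, incorrect) constant. You need either the paper's test-function/double-counting device or an equivalent computation of the Palm law of the environment at a discovered deep trap.
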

The process $H\e$ is an additive functional of the environment viewed by the particle $\hX$ (see (\ref{defenvpart}) for a definition). Under the annealed measure, this process is known to be stationary (Proposition~\ref{ergodicity}), and when $d \ge 5$, \cite{vardecay} provides an estimate of its speed of convergence to equilibrium (which is recalled in Theorem~\ref{gtrajtau} of the present paper). A simple consequence of these observations is the following (Proposition~\ref{indepincr}).
\begin{prop}
\label{indepincrintro}
Assume $d \ge 5$. If the law of $(H\e)$ under $\ov{\P}$ converges, along some subsequence, to the law of $H$, then $H$ is a subordinator. 
\end{prop}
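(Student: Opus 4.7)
The plan is to verify that the limit process $H$ is non-decreasing, vanishes at $0$, and has stationary and independent increments; together with the c\`adl\`ag paths inherent to the $M_1$-limit of non-decreasing c\`adl\`ag functions, these properties characterise a subordinator. Monotonicity and $H(0) = 0$ pass immediately to the limit from the corresponding properties of $H^{(\eps)}$.

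For \emph{stationary increments}, write
\[
H^{(\eps)}(s+t) - H^{(\eps)}(s) \; = \; \eps^{1/\alpha} \int_0^{\eps^{-1} t} \tau_0(\omega_{\eps^{-1} s + u})\, \d u,
\]
where $(\omega_u)_{u \ge 0}$ denotes the environment viewed by $\hX$. Proposition \ref{ergodicity} states that $(\omega_u)$ is stationary under $\ov{\P}$, so the joint law of the increments of $H^{(\eps)}$ is shift-invariant; the same property holds for $H$ by passage to the limit.

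The substantive step is \emph{independent increments}. Fix $0 < t_1 < t_2$ outside a countable exceptional set and two bounded Lipschitz functions $f, g$; we wish to show
\[
\ov{\E}\!\left[f(H^{(\eps)}(t_1))\, g(H^{(\eps)}(t_2) - H^{(\eps)}(t_1))\right] - \ov{\E}\!\left[f(H^{(\eps)}(t_1))\right]\, \ov{\E}\!\left[g(H^{(\eps)}(t_2) - H^{(\eps)}(t_1))\right] \xrightarrow[\eps \to 0]{} 0.
\]
Conditioning on $\omega_{\eps^{-1} t_1}$ and using the Markov property of $(\omega_u)$, the first expectation rewrites as $\ov{\E}[f(H^{(\eps)}(t_1))\, \Phi^{(\eps)}(\omega_{\eps^{-1} t_1})]$ for a function $\Phi^{(\eps)}$ bounded by $\|g\|_\infty$ satisfying $\ov{\E}[\Phi^{(\eps)}(\omega_0)] = \ov{\E}[g(H^{(\eps)}(t_2 - t_1))]$. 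Fix a small $r > 0$ and replace the first argument by $f(H^{(\eps)}(t_1 - r))$: since $f$ is bounded Lipschitz and $H^{(\eps)}(t_1) - H^{(\eps)}(t_1 - r) \to H(t_1) - H(t_1 - r)$ in distribution along the subsequence, the replacement error is $o(1)$ as $\eps \to 0$ and then $r \to 0$, because $t_1$ is a continuity point of $H$. Now condition on $\omega_{\eps^{-1}(t_1 - r)}$: the conditional expectation of $\Phi^{(\eps)}(\omega_{\eps^{-1} t_1})$ is the image of $\Phi^{(\eps)}$ under the Markov semigroup acting for time $\eps^{-1} r \to \infty$. Applied to the uniformly bounded family $(\Phi^{(\eps)})$, the $L^2(\ov{\P})$ variance-decay estimate of Theorem \ref{gtrajtau} (which is precisely where $d \ge 5$ is used) shows that this semigroup image concentrates in $L^2(\ov{\P})$ around $\ov{\E}[\Phi^{(\eps)}(\omega_0)]$ as $\eps \to 0$. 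A Cauchy--Schwarz bound against the bounded factor $f(H^{(\eps)}(t_1 - r))$ then factors the expectation, and letting $r \to 0$ completes the argument. The extension to $k \ge 3$ increments is analogous.

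The main obstacle is the non-integrability of $\tau_0$: one cannot discard a microscopic slice of the integrand by bounding its mean. The resolution is to excise a \emph{macroscopic} gap of length $\eps^{-1} r$, so that the excised quantity is a genuine increment of $H^{(\eps)}$, controlled by continuity of the macroscopic limit $H$, while the length $\eps^{-1} r$ diverges and activates the quantitative mixing estimate of Theorem \ref{gtrajtau}.
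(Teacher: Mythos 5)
Your proof is correct and follows essentially the same route as the paper's: independence of increments is obtained by conditioning at the start of a macroscopic time gap of length $\eps^{-1}\times\mathrm{const}$, applying the variance-decay estimate of Theorem~\ref{gtrajtau} to the resulting conditional expectation (this is where $d\ge 5$ enters), and factoring via Cauchy--Schwarz. The only organizational difference is that the paper builds the gap into the statement, proving independence of increments over separated intervals $[s_{2i-1},s_{2i}]$ with $s_{2i}<s_{2i+1}$ and leaving the passage to adjacent intervals implicit, whereas you excise the gap $[t_1-r,t_1]$ explicitly and control the excision error by the a.s.\ continuity of the limit at deterministic times.
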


Let us sketch, very roughly as a first step, our proof of Proposition~\ref{scaling3}. Tightness of the family $(H\e)_{\eps > 0}$ can easily be obtained. Indeed, the number of sites visited grows linearly with time, and the time spent on one site is bounded, so the process $H\e$ can be compared with a (rescaled) sum of i.i.d.~random variables with tail described by~(\ref{regvar}). 

The main goal is thus to show that the limit subordinator appearing in Proposition~\ref{indepincrintro} is an $\alpha$-stable subordinator, whose law does not depend on the subsequence considered. In the integral defining $H\e(t)$, only the few deepest traps visited really matter, as is usual when considering sums of non-integrable random variables. We may as well consider $H\e(t)$ as the sum of a finite number of contributions from the few traps whose depths are of the order of $\eps^{-1/\alpha}$. From Proposition~\ref{indepincrintro}, we learn that these contributions are asymptotically independent and identically distributed. It is thus sufficient to show that the contribution of the first deep trap visited converges, as $\eps$ tends to $0$. This contribution can be decomposed as a product of two terms~: first, the depth of the deep trap encountered, and second, the time spent on it by the random walk $\hX$. One can see that these two quantities become asymptotically independent, so what finally remains is to have some information on the time spent by the random walk $\hX$ on a deep trap. This occupation time is described by the Green function at this point, which can be seen as a function of the environment around the deep trap. Let us write $\tau\e(1)$ for the environment around the first deep trap encountered, and $G(\tau)$ for the Green function in the environment $\tau$. In order to describe the law of $\tau\e(1)$, we let $h : \Omega \to \R$ be some arbitrary function such that $h(\tau)$ does not depend on $\tau_0$. We study the integral of the image by $h$ of the environment viewed by the particle, but retaining only the contribution of the deepest traps. We can compute the expectation of this additive functional in two different ways. On one hand, this expectation should increase with time proportionally to $\E[h(\tau)]$, due to the stationarity of the environment. On the other, after the visit of the first deep trap, it should also be equal to $\ov{\E}[(Gh)(\tau\e(1))]$. The equality
$$
C \E[h(\tau)] = \ov{\E}[(Gh)(\tau\e(1))],
$$
which is valid for any function $h$, fully determines the law of the enviroment around the deep trap, and thus ends the proof of Proposition~\ref{scaling3}.

We now describe our proof with more precision. We hope to convince the reader that, although a bit long, it is fairly elementary. From now on, we assume that $a \neq 0$ (we will indicate briefly later on how to modify the proof to cover the special case $a = 0$). We say that a site $x \in \Z^d$ is \emph{discovered before time} $t$ if it belongs to the $1$-neighbourhood of the trajectory up to time $t$~:
$$
\exists s \le t : \|\hX_s - x\| \le 1 .
$$
We define $(x_i)_{i \in \N}$ as the sequence of sites discovered by the random walk $\hX$, following the order of appearance with time, and without repetition. The central property of interest to us is that, under $\ov{\P}$, the random variables $(\tau_{x_i})_{i \in \N}$ are independent and identically distributed (Proposition~\ref{tauiid}). Letting $l(x,\cdot)$ be the local time at site $x$ of the random walk $\hX$, one can rewrite $H\e$ as
\begin{equation}
\label{Hesumi}
H\e(t) = \eps^{1/\alpha} \int_0^{\eps^{-1} t}  \tau_{\hX_s} \ \d s = \eps^{1/\alpha} \sum_{i = 1}^{+\infty} l(x_i, \eps^{-1} t) \ \tau_{x_i}.
\end{equation}
As the number of sites discovered by $\hX$ grows linearly with time, the sum has of order $\eps^{-1}$ non-zero terms. Because of assumption~(\ref{regvar}), the main contribution in this sum comes from the largest terms. Fixing some small parameter $\delta > 0$, we say that a site $x$ is a \emph{deep trap} if $\eps^{1/\alpha} \tau_x \ge \delta$. The process $H\e$ in (\ref{Hesumi}) is well approximated by 
\begin{equation}
\label{defHde}
H\de(t) = \eps^{1/\alpha} \int_0^{\eps^{-1} t}  \tau_{\hX_s} \1_{\{ \eps^{1/\alpha} \tau_{\hX_s} \ge \delta \}} \ \d s =  \eps^{1/\alpha} \sum_{x_i \text{ deep trap}} l(x_i, \eps^{-1} t) \ \tau_{x_i}.
\end{equation}
Letting $(x\de(n))_{n \in \N}$ be the sequence of deep traps discovered by the random walk, one can rewrite (\ref{defHde}) as
\begin{equation}
\label{Hdesum}
H\de(t) = \eps^{1/\alpha} \sum_{n = 1}^{+\infty} l(x\de(n), \eps^{-1} t) \ \tau_{x\de(n)}.
\end{equation}
When a deep trap is discovered, the random walk $\hX$ may visit it several times (or possibly never), and then never return to it. These successive nearby visits occur on a time scale that does not depend on $\eps$. Due to the time renormalisation, the function $l(x\de(n), \eps^{-1} \cdot)$ tends to look more and more like a step function. Letting $T\de(n)$ be the instant of discovery of $x\de(n)$, we have (Proposition~\ref{distM1})~:
\begin{equation}
\label{ldestep}
l(x\de(n), \eps^{-1} t) \simeq l(x\de(n), \infty) \ \1_{\{t \ge \eps T\de(n)\}}.
\end{equation}
Let $G(\tau)$ be the expected total time spent at the origin for the walk $\hX$ started at the origin (in the environment $\tau$). Introducing a new random variable $e\de(n)$, we decompose $l(x\de(n), \infty)$ the following way~:
\begin{equation*}
l(x\de(n), \infty) = G\left( \theta_{x\de(n)} \ \tau \right) \ e\de(n),
\end{equation*}
where $(\theta_x)_{x \in \Z^d}$ are the translations acting on the set of environments, $(\theta_x \ \tau)_y = \tau_{x+y}$. It might happen that $l(x\de(n), \infty) = 0$, in the case when the $n^\text{th}$ deep trap is discovered but not visited by the random walk. However, because $a \neq 0$, the deep trap is more attractive than other sites, so it will actually be visited with high probability, and on this event, $e\de(n)$ is an exponential random variable of parameter $1$ (Proposition~\ref{visittrap}).

We should investigate the behaviour of $G\left( \theta_{x\de(n)} \ \tau \right)$. Let us define
$$
\tau\de(n) = \left( \tau_{x\de(n) + z} \right)_{z \neq 0} \qquad \text{and} \qquad \ov{G}\left((\tau_z)_{z \neq 0}\right) = \lim_{\tau_0 \to +\infty} G(\tau).
$$
In words, $\tau\de(n)$ is the environment around the deep trap $x\de(n)$, without consideration of the value at the origin. It turns out (Proposition~\ref{lde}) that $G\left( \theta_{x\de(n)} \ \tau \right)$ is well approximated by $\ov{G}\left(\tau\de(n)\right)$. Hence, the process $H\de(t)$ is close to
\begin{equation}
\label{Hdesuma}
\eps^{1/\alpha} \sum_{n = 1}^{+\infty}  \ov{G}\left(\tau\de(n)\right) \ e\de(n) \ \tau_{x\de(n)} \ \1_{\{t \ge \eps T\de(n)\}}.
\end{equation}

We can describe precisely the asymptotic behaviour of the random variables appearing in the above sum. Indeed, As was briefly mentioned before, the law of $e\de(n)$ converges to an exponential law of parameter $1$. Moreover, the law of the normalised depth of a deep trap $\eps^{1/\alpha} \tau_{x\de(n)}$ is the law of $\tau_0$ conditioned on being greater than $\delta \eps^{-1/\alpha}$, and hence (Proposition~\ref{taulim}) it converges to the law with density
\begin{equation}
\label{densitelimite}
 \frac{\alpha \delta^\alpha}{x^{\alpha+1}} \ \d x \ \1_{[\delta, +\infty)}(x).
\end{equation}
Let us consider the random variables $(\eps T\de(n))_{n \in \N}$. We recall that $(\tau_{x_i})_{i \in \N}$ are independent and identically distributed. Considering whether $x_i$ is a deep trap or not forms a sequence of Bernoulli trials. The probability of success is given by
$$
\P[\tau_0 \ge \eps^{-1/\alpha} \delta] \sim \frac{\eps}{\delta^\alpha}.
$$
Moreover, the number of sites visited during the time $\eps^{-1}t$ is asymptotically of order $c \eps^{-1} t$ (Proposition~\ref{range}). It is then a classical fact that $(\eps T\de(n))_{n \in \N}$ converge to a Poisson process of intensity $c\delta^{-\alpha}$ (Proposition~\ref{poisson}). 

A consequence of these simple observations, together with the fact (given by Proposition~\ref{tightenv}) that $(\tau\de(n))_{\eps > 0}$ is tight, ensures that the family of laws of
\begin{equation}
\label{ajoint}
\left( \tau\de(n), e\de(n), \eps^{1/\alpha} \tau_{x\de(n)}, T\de(n) \right)_{n \in \N},
\end{equation}
indexed by $\eps$, is tight. Let us choose a sequence $(\eps_k)_{k \in \N}$ going to zero, on which the joint law of the random variables in (\ref{ajoint}) converges to the law of some
\begin{equation}
\label{ajointlim}
\left(\tau_\delta(n), e_\delta(n), \tau_\delta^\circ(n), T_\delta(n) \right)_{n \in \N}. 
\end{equation}
By the approximation (\ref{Hdesuma}), the law of $H\dek$ then converges to the one of $H_\delta$ defined by (Proposition~\ref{limitHform})
$$
H_\delta(t) = \sum_{n = 1}^{+\infty}  \ov{G}\left(\tau_\delta(n)\right) \ e_\delta(n) \ \tau^\circ_\delta(n) \ \1_{\{t \ge  T_\delta(n)\}}.
$$
As explained before for $H$ in Proposition~\ref{indepincrintro}, we know a priori that $H_\delta$ is a subordinator. Let $\psi_\delta$ be its Laplace exponent, so that for any $\lambda > 0$~:
$$
\ov{\E}[e^{-\lambda H_\delta(t)}] = e^{-t \psi_\delta(\lambda)}.
$$
The jump rate of $H_\delta$ is the intensity of the Poisson process $(T_\delta(n))$, which is equal to $c \delta^{-\alpha}$. Hence, its Laplace transform is given by
\begin{equation}
\label{psidellam}
\psi_\delta(\lambda) = c \delta^{-\alpha} \ov{\E}\left[ 1 - e^{-\lambda \ov{G}\left(\tau_\delta(1)\right) \ e_\delta(1) \ \tau^\circ_\delta(1)}  \right].
\end{equation}
Let us now temporarily admit the following result.
\begin{prop}
\label{lawtaudelta}
The law of $\tau_\delta(1)$ is absolutely continuous with respect to $\P$, and its density is given by
$$
\frac{1}{c \ov{G}(\tau)} \d \P(\tau).
$$
\end{prop}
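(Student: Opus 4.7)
The plan is to implement the strategy sketched in the introduction: compute an additive functional of the environment viewed by $\hX$ in two different ways, via stationarity and via the decomposition over deep traps, then extract the density by comparison. Fix a bounded measurable $h$ on $\Omega$ that does not depend on $\tau_0$, and for $\eps, t > 0$ set
$$
F\e(t) = \int_0^{\eps^{-1} t} h(\theta_{\hX_s} \tau) \, \1_{\{\eps^{1/\alpha} \tau_{\hX_s} \ge \delta\}} \, \d s.
$$
By Proposition~\ref{ergodicity} the environment seen from $\hX$ is stationary under $\ov\P$ with marginal $\P$, and since $\P$ is a product measure while $h(\tau)$ and $\1_{\{\tau_0 \ge \delta\eps^{-1/\alpha}\}}$ depend on disjoint coordinates,
$$
\ov\E[F\e(t)] = \frac{t}{\eps}\, \E[h(\tau)] \, \P[\tau_0 \ge \delta \eps^{-1/\alpha}] \xrightarrow[\eps \to 0]{} \frac{t\, \E[h(\tau)]}{\delta^\alpha}
$$
by the tail assumption (\ref{regvar}).

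On the other hand, the integrand is supported exactly on the visits to deep traps, so as in (\ref{Hdesum}),
$$
F\e(t) = \sum_{n \,:\, T\de(n) < \eps^{-1} t} h(\tau\de(n)) \, l(x\de(n), \eps^{-1} t),
$$
using that $h(\theta_{x\de(n)}\tau) = h(\tau\de(n))$ because $h$ is independent of $\tau_0$. Along the subsequence $(\eps_k)$ on which (\ref{ajoint}) converges, the approximations (\ref{ldestep}) and $l(x\de(n), \infty) \simeq \ov G(\tau\de(n))\, e\de(n)$ (Proposition~\ref{lde}), combined with the Poisson limit (Proposition~\ref{poisson}) and the independence of the limiting $(e_\delta(n))$ with $\ov\E[e_\delta(1)] = 1$ (Proposition~\ref{visittrap}), should yield
$$
\ov\E[F\ek(t)] \xrightarrow[k \to \infty]{} \ov\E\!\left[\sum_{n \,:\, T_\delta(n) < t} h(\tau_\delta(n)) \, \ov G(\tau_\delta(n)) \, e_\delta(n) \right] = c\, \delta^{-\alpha}\, t\, \ov\E\!\left[h(\tau_\delta(1)) \, \ov G(\tau_\delta(1))\right].
$$
Comparing the two limits gives, for every bounded measurable $h$ independent of $\tau_0$,
$$
\E[h(\tau)] = c\, \ov\E\!\left[h(\tau_\delta(1)) \, \ov G(\tau_\delta(1))\right],
$$
which, since $\ov G$ itself does not depend on $\tau_0$, is exactly the asserted density formula.

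The main difficulty lies in the second computation: only weak convergence of (\ref{ajoint}) is available, whereas the sum in $F\ek(t)$ contains a growing number of terms and involves the potentially unbounded local times $l(x\de(n), \eps^{-1} t)$. To promote weak convergence into convergence of expectations I would first reduce to $h \ge 0$, truncate at the $N$-th deep trap and cap $\ov G$ at level $M$, pass to the limit in $\eps_k$ using the tightness from Proposition~\ref{tightenv} together with a uniform $L^1$ control of $l(x\de(n), \eps^{-1} t)$ coming from the Green function estimates underlying Proposition~\ref{lde}, and finally let $M, N \to \infty$, using the first (stationarity) computation as a global upper bound to justify dominated convergence. The identity then extends to arbitrary bounded measurable $h$ by linearity.
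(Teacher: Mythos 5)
Your proposal follows essentially the same route as the paper's own proof: your $F\e$ is exactly the process $L\de$ of (\ref{defLde}), the stationarity computation is identical, and your plan for upgrading weak convergence to convergence of expectations is precisely the content of Lemma~\ref{unifint} (uniform integrability of $L\de(t)$, obtained by controlling the number of deep traps via Proposition~\ref{discovered} and the Bernoulli structure of $(\1_{\{\eps^{1/\alpha}\tau_{x_i}\ge\delta\}})_i$, and the local times by exponential variables with uniformly bounded mean).

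Three steps are thinner in your write-up than they need to be. First, the factorization $\ov{\E}\bigl[\sum_n h(\tau_\delta(n))\ov{G}(\tau_\delta(n))e_\delta(n)\1_{\{t\ge T_\delta(n)\}}\bigr] = c\delta^{-\alpha}t\,\ov{\E}\bigl[h(\tau_\delta(1))\ov{G}(\tau_\delta(1))\bigr]$ does not follow from the weak limit of (\ref{ajoint}) alone: you need the jump heights to be identically distributed across $n$ and independent of the jump instants, which is supplied by the subordinator property of the limit (Proposition~\ref{indepincrL}) together with Proposition~\ref{trioindep}, and uses $h>0$ so that the jumps of the limit process are exactly the $T_\delta(n)$; the paper sidesteps the full factorization by expanding to first order in $t$ and absorbing the terms $n\ge 2$ into an $O(t^2)$. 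Second, the test functions must be continuous (not merely bounded measurable) for the passage to the limit along $(\eps_k)$, since only weak convergence of $\tau\dek(n)$ is available; so the identity $\E[h]=c\,\ov{\E}[(\ov{G}h)(\tau_\delta(1))]$ is first established only for that restricted class. Third, extracting the density from this restricted class requires testing against $h=\ov{G}^{-1}f$, and $\ov{G}$ is bounded above but not bounded away from $0$, so one must truncate with $\1_{\{\ov{G}\ge\eta\}}$ and conclude by monotone convergence. None of these is a fatal gap, but each needs an explicit argument.
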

We have seen previously that $e_\delta(1)$ is an exponential random variable of parameter~$1$, and that the law of $\tau^\circ_\delta(1)$ is given by~(\ref{densitelimite}). Moreover, one can show (Proposition~\ref{trioindep}) that $\tau_\delta(1)$, $e_\delta(1)$ and $\tau^\circ_\delta(1)$ are independent random variables. As a consequence, the law of $H_\delta$ is fully characterized, and in particular, does not depend on the sequence $(\eps_k)$ considered. This implies that the law of $H\de$ converges to the law of $H_\delta$ (Proposition~\ref{cvHde}), and its Laplace transform (\ref{psidellam}) can be computed. Letting $\delta$ tend to $0$, one obtains (Proposition~\ref{cvH}) that $\psi_\delta(\lambda)$ converges to
\begin{equation}
\label{psilim}
\psi(\lambda) = \Gamma(\alpha+1) \E\left[\ov{G}(\tau)^{\alpha - 1}\right] \int_0^{+\infty} (1-e^{-\lambda u}) \frac{\alpha}{u^{\alpha + 1}} \ \d u,
\end{equation}
where $\Gamma$ is Euler's Gamma function. By an interversion of limits (Proposition~\ref{p:diagram}), one can check that the law of $H\e$ converges, as $\eps$ tend to $0$, to the law of an $\alpha$-stable subordinator whose Laplace exponent $\psi$ is given by~(\ref{psilim}).

Proposition~\ref{scaling3} is thus obtained, provided we can prove Proposition~\ref{lawtaudelta}. In order to do so, our approach is fairly similar. Let us write $(\htau(t))_{t \ge 0}$ for the environment viewed by $\hX$ (defined in~(\ref{defenvpart})). We say that a function $h : \Omega \to \R$ is a \emph{test function} if it is a bounded continuous function taking values in $(0,+\infty)$, and such that $h(\tau)$ does not depend on $\tau_0$. We also introduce
\begin{equation}
\label{defLde}
L_\delta^{(\eps)}(t) = \int_0^{\eps^{-1} t} h(\htau(s)) \1_{\{ \eps^{1/\alpha} \tau_{\hX_s} \ge \delta \}} \ \d s.
\end{equation}
We may decompose $L\de$ the same way we decomposed $H\de$, namely
$$
L\de(t) \simeq \sum_{n = 1}^{+\infty} (\ov{G} h)(\tau\de(n)) \ e\de(n)  \ \1_{\{t \ge \eps T\de(n)\}}.
$$
For a sequence $(\eps_k)$ along which the joint law of the random variables in (\ref{ajoint}) converges to the law of those appearing in (\ref{ajointlim}), the process $L\dek$ converges to
\begin{equation}
\label{limitLdelta}
L_\delta(t) = \sum_{n = 1}^{+\infty} (\ov{G} h)(\tau_\delta(n)) \ e_\delta(n)  \ \1_{\{t \ge T_\delta(n)\}}.
\end{equation}
The nice property of the process $L\de$ is that its expectation is easily computed. Indeed, from (\ref{defLde}) and using the stationarity of the environment viewed by the particle, it comes that
$$
\ov{\E}\left[ L\de(t) \right] = \eps^{-1} t \E\left[ h(\tau) \1_{\{ \eps^{1/\alpha} \tau_0 \ge \delta \}} \right],
$$
which, as $h(\tau)$ does not depend on $\tau_0$, and using~(\ref{regvar}), converges to
\begin{equation}
\label{expectL1}
t \delta^{-\alpha} \E[h(\tau)].
\end{equation}
On the other hand, an adaptation of Proposition~\ref{indepincrintro} shows that $L_\delta$ is a subordinator. We thus obtain from~(\ref{limitLdelta}) that
$$
\ov{\E}\left[L_\delta(t)\right] = t c \delta^{-\alpha} \ov{\E}\left[e_\delta(1) \ (\ov{G} h)(\tau_\delta(1))\right] = t c \delta^{-\alpha} \ov{\E}\left[(\ov{G} h)(\tau_\delta(1))\right],
$$
using the fact that $e_\delta(1)$ is independent of $\tau_\delta(1)$. Comparing this with (\ref{expectL1}) gives Proposition~\ref{lawtaudelta}. 

Let us now explain how one can deduce Theorem~\ref{scaling2} from Proposition~\ref{scaling3}. The first step is to obtain the convergence of the joint process $Z\e = (\hX\e, H\e)$ under the annealed measure. We take for granted that the law of $\hX\e$ converges to the law of a Brownian motion \cite{bardeu}. As a consequence, the process $Z\e$ is tight, and any limit point is of the form $Z = (B,H)$, where the laws of the marginals $B$ and $H$ are known. Moreover, a simple modification of Proposition~\ref{indepincrintro} shows that the process $Z$ has independent increments. It is a classical fact that, if a Brownian motion and a Poisson process share a common filtration, then the processes are independent. Similarly, the processes $B$ and $H$ must in fact be independent (Proposition~\ref{cvjointannealed}). This property gives a complete description of the limit law $Z$, which is therefore unique. 

The second (and last) step in order to obtain Theorem~\ref{scaling2} is to transform convergence under the annealed measure into convergence under the \emph{quenched} measure, i.e. under $\PPt_0$ for almost every $\tau$. This can be obtained by a kind of concentration argument that is due to \cite{bs}. It consists in checking that the variance of certain functionals of $Z\e$ decays sufficiently fast as $\eps$ tends to $0$, a fact ensured by Theorem~\ref{gtrajtau}.

In this paper, we focus on the case $a \neq 0$. Our proof of Proposition~\ref{scaling3} can however be easily adapted to cover the case $a = 0$, changing the sequence of discovered sites $(x_i)_{i \in \N}$ for the sequence of distinct sites actually visited by the walk. The end of the proof is then easier, as $G(\tau)$ does not depend on $\tau$ (in this case, we recall that $\hX$ is the simple random walk). Our method is however not the wisest in this case, and one can in fact obtain much better results for general random walks whose law does not depend on the environment \cite{fmv}.

An interesting feature of our results is that the limit $\alpha$-stable process $H$ appearing in Theorem~\ref{scaling2} is described explicitely by its Laplace transform~(\ref{psilim}). One can check that it coincides with the one obtained when $a = 0$ in \cite{dim2}, provided one adds the missing $G_d(0)^{-1}$ in \cite[(4.15)]{dim2}, and propagates changes accordingly.

Let us now say a word on the topologies we consider. Results of convergence concerning  processes defined on $\R_+$ should be understood as the convergence of the restrictions on $[0,t]$, for any $t > 0$. In the results mentioned above, there appears the usual Skorokhod's $J_1$ topology, and also the weaker $M_1$ topology. In Proposition~\ref{scaling3} and Theorem~\ref{scaling2}, it is not possible to replace the $M_1$ topology by the $J_1$ topology \cite{dim2} (see also the discussion at the beginning of section~\ref{s:jumps}). One may also want to replace the $J_1$ topology involved in Theorems~\ref{scaling1} and~\ref{scaling2} by the uniform topology, but measurability problems preclude this possibility \cite[Section~18]{bill}. The change of topology can nevertherless be done if one replaces the discontinuous processes $X\e$ and $\hX\e$ by continuous approximations of them.

Apart from this introduction, the paper is divided into 10 sections and an appendix. In section~\ref{s:envpart}, we recall the definition of the process of the environment viewed by the particle, and state its main properties~: stationarity, ergodicity, and mixing. In section~\ref{s:indep}, we prove a general form of Proposition~\ref{indepincrintro}. We then define the exploration process $(x_i)_{i \in \N}$ in section~\ref{s:explo}, and show that the number of sites discovered grows asymptotically linearly with time in section~\ref{s:range}. Section~\ref{s:jumps} justifies the heuristic observation~(\ref{ldestep}). The main achievement of section~\ref{s:envatrap} is to prove Proposition~\ref{lawtaudelta}, which enables us to prove Proposition~\ref{scaling3} in section~\ref{s:identif}. The convergence of the joint process $Z\e$ under the annealed measure is then derived in section~\ref{s:joint}, and the passage from the annealed to the quenched measure leading to Theorem~\ref{scaling2} is proved in section~\ref{s:quenched}. Finally, we prove that Theorem~\ref{scaling2} implies Theorem~\ref{scaling1} in section~\ref{s:concl}. Section~\ref{s:trans} is an appendix containing some classical results of interest from potential theory.

As the reader has probably already noticed, we adopt here the (unusual for the author) convention that $\N = \{1,2,\ldots\}$.  We avoid this way slightly awkward formulations concerning for instance the ``$0^\text{th}$'' deep trap discovered.
%
%
%
%
%
%
\section{The environment viewed by the particle}
\label{s:envpart}
\setcounter{equation}{0}
We recall here the definition of the environment viewed by the particle, as well as some of its important properties. There are translations $(\theta_x)_{x \in \Z^d}$ acting on the space $\Omega$ of environments, such that $(\theta_x \ \tau)_y = \tau_{x+y}$. The \emph{environment viewed by the particle} is the Markov process on $\Omega$ defined by
\begin{equation}
\label{defenvpart}
\htau(t) = \theta_{\hX_t} \ \tau.
\end{equation}
We recall the following classical result \cite[Lemma 4.3 (iv)]{masi}.
\begin{prop}
\label{ergodicity}
The measure $\P$ is reversible and ergodic for the process $(\htau(t))_{t \ge 0}$.
\end{prop}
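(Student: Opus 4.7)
The plan is to verify the two assertions separately by standard arguments for the environment-seen-from-the-particle process, exploiting the symmetry of the jump rates of $\hX$ and the i.i.d.\ structure of $\P$.

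\textbf{Reversibility.} I would first write down the generator of $\htau$ acting on bounded cylinder functions $f : \Omega \to \R$:
$$
(\L f)(\tau) = \sum_{|e|=1} (\tau_0 \tau_e)^a \, \bigl[ f(\theta_e \tau) - f(\tau) \bigr],
$$
which is just the generator of $\hX$ rewritten in terms of the shifted environment, since a jump of $\hX$ from $0$ to a neighbour $e$ corresponds to the change $\tau \mapsto \theta_e \tau$. To check symmetry of $\L$ on $L^2(\P)$, expand $\langle f, \L g\rangle_{L^2(\P)}$ and perform the change of variable $\tau \mapsto \theta_{-e}\tau$ in each off-diagonal term. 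This is licit because $\P$ is invariant under the translations $\theta_x$ (the $\tau_x$ being i.i.d.), and the rate transforms into the symmetric $(\tau_{-e}\tau_0)^a$. Pairing $e$ with $-e$ in the sum yields $\langle f, \L g\rangle_{L^2(\P)} = \langle \L f, g\rangle_{L^2(\P)}$, which is reversibility.

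\textbf{Ergodicity.} The cleanest route uses the associated Dirichlet form:
$$
\cE(f,f) = -\int f \, \L f \, \d\P = \frac{1}{2} \sum_{|e|=1} \int (\tau_0 \tau_e)^a \bigl(f(\theta_e \tau) - f(\tau)\bigr)^2 \d\P(\tau).
$$
Let $f \in L^2(\P)$ be invariant under the semigroup of $\htau$, so that $\L f = 0$ and hence $\cE(f,f) = 0$. Since $\tau_x \ge 1$, the weight $(\tau_0 \tau_e)^a$ is strictly positive, so every term in the sum must vanish, giving $f(\theta_e \tau) = f(\tau)$ for $\P$-almost every $\tau$ and every unit vector $e$. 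Chaining this equality along lattice paths, $f(\theta_x \tau) = f(\tau)$ $\P$-a.s.\ for every $x \in \Z^d$. Since $\P$ is a product measure, it is ergodic under the $\Z^d$-action of translations (a classical consequence of Kolmogorov's zero-one law), and therefore $f$ must be $\P$-a.s.\ constant, which is the ergodicity statement.

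The mildly delicate point is domain-theoretic: one should first establish the identity $\cE(f,f) = -\langle f, \L f\rangle_{L^2(\P)}$ on a core of bounded local functions (on which the sum defining $\L$ is obviously convergent), then extend by density to the domain of the Dirichlet form and finally argue that an $L^2$-invariant function automatically lies in this domain with zero energy. Once reversibility is in place, the rest of the argument requires only the positivity of the symmetric rates and the product structure of $\P$, so this bookkeeping with the $L^2$ theory of symmetric Markov semigroups is the only place where one has to be careful.
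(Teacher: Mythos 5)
The paper does not actually prove this proposition: it quotes it as a classical result, citing \cite[Lemma 4.3 (iv)]{masi}, and your argument is precisely the standard proof contained in that reference (reversibility from translation invariance of $\P$ together with the symmetry of the conductances $(\tau_x\tau_y)^a$; ergodicity from the vanishing of the Dirichlet energy of an invariant function, strict positivity of the rates, and ergodicity of the i.i.d.\ measure under $\Z^d$-shifts), so it is correct. The only model-specific point worth making explicit in your ``bookkeeping'' paragraph is that the conductances are unbounded and $\E[(\tau_0\tau_e)^a]$ can be infinite (since $\E[\tau_0^a]=\infty$ once $a\ge\alpha$), so the jump form is \emph{not} finite on all bounded cylinder functions; your scheme nevertheless goes through because one only needs the one-sided inequality $\cE(f,f)\ge \tfrac12\sum_{|e|=1}\int (\tau_0\tau_e)^a\bigl(f(\theta_e\tau)-f(\tau)\bigr)^2\,\d\P$, valid by a Fatou argument for every $f$ in the domain of the closed form, combined with the lower bound $(\tau_0\tau_e)^a\ge 1$.
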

Throughout this paper, one central tool is an estimate of the speed of convergence to equilibrium of this process \cite[Proposition 7.2]{vardecay}, that we now recall. For some $s \ge 0$, we say that a function $g(\hX,\tau)$ \emph{depends only on the trajectory up to time} $s$ if one can write it as
\begin{equation}
\label{gform}
g\left((\hX_u)_{u \le s},(\tau_{\hX_u})_{u \le s}\right).
\end{equation}
We say that such a function is \emph{translation invariant} if moreover, for any $x \in \Z^d$~:
$$
g\left((x + \hX_u)_{u \le s},(\tau_{\hX_u})_{u \le s}\right) = g\left((\hX_u)_{u \le s},(\tau_{\hX_u})_{u \le s}\right).
$$
For a function $f : \Omega \to \R$, we write $\var(f)$ for the variance of the function $f$ with respect to the measure $\P$, and $f_t = \EEt_0[f(\htau(t))]$. \cite[Proposition 7.2]{vardecay} states the following.
\begin{thm}
\label{gtrajtau}
When $d \ge 5$, there exists $C > 0$ such that, for any bounded function $g$ that depends only on the trajectory up to time $s$ and is translation invariant, if $f(\tau) = \EEt_0[g]$, then for any $t > 0$~:
$$
\var(f_t) \le C \|g \|_\infty^2  \ \frac{(s+t)^2}{t^{d/2}} .
$$
\end{thm}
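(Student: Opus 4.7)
My approach is a two-walk covariance estimate combined with heat kernel bounds for $\hX$. By reversibility of $\P$ for the environment process (Proposition~\ref{ergodicity}) one has $\E[f_t]=\E[f]$, so $\var(f_t)=\E[(f_t-\E f)^2]$. Using the Markov property and translation invariance of $g$, the quantity $f_t(\tau)$ may be rewritten as
$$f_t(\tau)=\EEt_0\bigl[g\bigl((\hX_u)_{t\le u\le t+s},(\tau_{\hX_u})_{t\le u\le t+s}\bigr)\bigr],$$
i.e.\ the expectation of $g$ evaluated on a trajectory segment of length $s$ starting at time $t$. Introducing two independent copies $\hX$, $\hX'$ of the walk driven by the common environment $\tau$, and writing $g_1$, $g_2$ for $g$ evaluated on each segment, the variance takes the form of an annealed two-walk covariance of $g_1$ and $g_2$.

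\textbf{Reduction to intersection.} Since the $(\tau_x)$ are i.i.d., this covariance vanishes on the event that the two trajectory segments on $[t,t+s]$ visit disjoint sets of sites. Consequently $\var(f_t)$ is bounded by $C\|g\|_\infty^2$ times the annealed probability that the two walks share a visited site during $[t,t+s]$. Reversibility of $\hX$ for the counting measure (the jump rates $(\tau_x\tau_y)^a$ being symmetric) together with Chapman--Kolmogorov lets me control this probability by an integral of the form
$$C\int_0^{s+t}\!\!\int_0^{s+t}\E\bigl[p^\tau_{u+v}(0,0)\bigr]\,du\,dv,$$
where $p^\tau_u(x,y)=\PPt_x[\hX_u=y]$ is the quenched heat kernel of $\hX$.

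\textbf{Heat kernel and obstacle.} The decisive input is a Gaussian-type upper bound $\E[p^\tau_u(0,0)]\le C u^{-d/2}$ for the time-changed walk $\hX$ among random conductances. Inserting this estimate, the region $\{u+v\ge t\}$ (which has area $O((s+t)^2)$ within $[0,s+t]^2$) contributes at most $t^{-d/2}$ to the integrand, yielding the claimed factor $(s+t)^2/t^{d/2}$; the dimension assumption $d\ge 5$ is what ensures the short-time regime $\{u+v<t\}$ is absorbed in the same order. The main obstacle is precisely this uniform annealed Gaussian heat kernel bound for $\hX$, a delicate piece of input from the potential theory of random conductances; with it in hand, the rest is careful book-keeping of the two-walk intersection. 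An alternative, intrinsically spectral route would be to establish a Nash-type inequality for the Dirichlet form of $(\htau(t))$, giving an $L^2(\P)$-decay of the semigroup $P_t$, and to recover the $(s+t)^2$ prefactor by unrolling the trajectory dependence of $g$ through an integration argument over the two walks.
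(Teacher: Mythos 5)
First, a point of reference: the paper does not prove Theorem~\ref{gtrajtau} at all. It is quoted from \cite[Proposition 7.2]{vardecay}, with only the remark that the argument there, written for independent conductances, adapts to the locally dependent conductances $(\tau_x\tau_y)^a$. So you are supplying a proof where the paper supplies a citation, and your proposal must be judged against the strategy of that reference.

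Your argument has a genuine gap at the ``Reduction to intersection'' step. Writing $g_1,g_2$ for $g$ evaluated on the time-$[t,t+s]$ segments of two independent walks $\hX,\hX'$ driven by the \emph{same} environment, it is not true that the covariance of $g_1$ and $g_2$ vanishes on the event that these two segments visit disjoint sets of sites. Each $g_i$ depends on $\tau$ not only through the observed values $(\tau_{\hX_u})_{t\le u\le t+s}$ but also through the path segment $(\hX_u)_{t\le u\le t+s}$ itself, whose law is a function of the environment along the \emph{entire} trajectory from time $0$ --- and the two full ranges always overlap, since both walks start at the origin. Hence, even conditionally on disjointness of the segments, $g_1$ and $g_2$ are both functions of (for instance) $\tau$ near the origin, and the i.i.d.\ structure of $\P$ cannot be invoked to factorize the expectation. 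Decoupling the observed functional from the environment-dependent law of the walk is precisely the whole difficulty of the theorem; \cite{vardecay} handles it by a genuinely different route, namely a resolvent/spectral estimate that bounds $\var(P_t h)$ by $t^{-d/2}$ times a norm measuring the influence of each individual coordinate $\tau_x$ on $h$, which for a trajectory functional is then controlled by the expected range of the walk (linear in $s+t$, cf.\ Proposition~\ref{discovered}) --- this is where the factor $(s+t)^2$ comes from. Two smaller points: the on-diagonal bound $\sup_{x,y}\PPt_x[\hX_u=y]\le Cu^{-d/2}$ is not the ``main obstacle'' you fear, since it follows from a Nash inequality by comparison with the simple random walk because the conductances are bounded below by $1$ (Proposition~\ref{diagonaldecay}); and in your double integral the short-time region is not ``absorbed in the same order'': $\iint_{\{u+v<t\}}(u+v)^{-d/2}\,\d u\,\d v$ diverges for $d\ge 4$, so the integration must genuinely be restricted to $u,v\ge t$, which only the (invalid) segment-disjointness reduction would justify.
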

We point out that this result was initially established for a random walk among random independent conductances. Although $\hX$ is a random walk among random conductances, they fail to be independent. Indeed, the conductance of the edge between two sites $x \sim y$ is $(\tau_x \tau_y)^a$, hence conductances of adjacent edges are correlated. One can however check that, due to the locality of the dependence dealt with here, the results of \cite{vardecay} still apply in our present context. 

For convenience, we also recall a useful observation from \cite[(7.2)]{vardecay}. For a function $g$ of the form (\ref{gform}), we write $g(t)$ for
$$
g\left((\hX_u)_{t \le u \le t + s},(\tau_{\hX_u})_{t \le u \le t + s}\right),
$$
and define $f(\tau) = \EEt_0[g]$. Then one can check that, when $g$ is translation invariant~:
\begin{equation}
\label{vareq72}
f_t(\tau) = \EEt_0[g(t)].
\end{equation}
%
%
%
%
%
%
\section{Asymptotically independent increments}
\label{s:indep}
\setcounter{equation}{0}
Let $\delta \ge 0$. We recall from~(\ref{defHde}) the definition of $H\de$ as
$$
H\de(t) = \eps^{1/\alpha} \int_0^{\eps^{-1} t}  \tau_{\hX_s} \1_{\{ \eps^{1/\alpha} \tau_{\hX_s} \ge \delta \}} \ \d s.
$$
In this section, we consider possible limit points for the law of $H\de$ under $\ov{\P}$, as $\eps$ goes to $0$. We will see, using Theorem~\ref{gtrajtau}, that any such limit point is the law of a subordinator. More precisely, for a sequence $(\eps_k)_{k \in \N}$ converging to $0$, let us assume that the law of $\Hdek$ under $\ov{\P}$ converges, in the sense of finite-dimensional distributions and as $k$ tends to infinity, to some measure $\mu_\delta^\circ$. Possibly enlarging the probability space, we define a random variable $\Hdo$ which has law $\mu_\delta^\circ$ under $\ov{\P}$. The purpose of this section is to show the following result.
\begin{prop}
\label{indepincr}
If $d \ge 5$, then the process $\Hdo$ is a subordinator under $\ov{\P}$.
\end{prop}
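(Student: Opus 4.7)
The plan is to verify the three nontrivial conditions for $\Hdo$ to be a subordinator: non-negativity of increments (monotonicity), stationary increments, and independent increments. Monotonicity of $\Hdo$ is inherited in the limit from the non-negativity of the integrand in~(\ref{defHde}). For stationary increments, I would note that for any $0\le t_1<t_2$ the prelimit increment $\Hdek(t_2)-\Hdek(t_1)$ is a translation-invariant functional of $(\htau(s))$ on $[\eps_k^{-1}t_1,\eps_k^{-1}t_2]$; since this process is stationary under $\ov{\P}$ by Proposition~\ref{ergodicity}, the increment has the same law as $\Hdek(t_2-t_1)$, and this property descends to $\Hdo$.

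The substantive step is independence. I would prove that for any bounded continuous $F_1,F_2$ and any $0\le t_1<t_2$,
$$
\mathrm{Cov}_{\ov{\P}}\bigl(F_1(\Hdek(t_1)),\, F_2(\Hdek(t_2)-\Hdek(t_1))\bigr) \xrightarrow[k\to\infty]{} 0,
$$
and then extend to any finite collection of times by induction. The key device is a decoupling gap $r=r_{\eps}$ satisfying $\eps r\to 0$ (so the gap contributes negligibly) and $\eps^{-1}/r^{d/4}\to 0$ (so the mixing estimate prevails). The hypothesis $d\ge 5$ is precisely what allows both, for instance by choosing $r=\eps^{-c}$ with $4/d<c<1$. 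Replacing $\Hdek(t_1)$ by the truncation $\td H_1$ obtained by cutting off the integral at time $\eps^{-1}t_1-r$ is harmless: the gap contribution $\Hdek(t_1)-\td H_1$ tends to zero in probability because the expected number of deep traps among the $O(r)$ sites visited during the gap is of order $\eps r\to 0$ (via the range estimate and~(\ref{regvar})), so the gap integral vanishes identically with high probability.

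Writing $\td H_2=\Hdek(t_2)-\Hdek(t_1)$, which is a translation-invariant functional of length $s=\eps^{-1}(t_2-t_1)$ of the trajectory from time $\eps^{-1}t_1$ onward, and setting $\varphi(\tau)=\EEt_0[F_2(\td H_2)\text{ in environment } \tau \text{ started at } 0]$, two successive applications of the Markov property of $\htau$ (at times $\eps^{-1}t_1$ and $\eps^{-1}t_1-r$) give
$$
\EEt_0\bigl[F_2(\td H_2)\,\big|\,\mathcal{F}_{\eps^{-1}t_1-r}\bigr] = \varphi_r\bigl(\htau(\eps^{-1}t_1-r)\bigr).
$$
Since $F_1(\td H_1)$ is measurable with respect to this $\sigma$-algebra, Cauchy--Schwarz combined with stationarity of $\htau$ under $\ov{\P}$ and Theorem~\ref{gtrajtau} applied to $\varphi$ yields
$$
\bigl|\mathrm{Cov}_{\ov{\P}}(F_1(\td H_1), F_2(\td H_2))\bigr| \le \|F_1\|_\infty\|F_2\|_\infty\sqrt{C\,(s+r)^2/r^{d/2}},
$$
which tends to zero by the choice of~$r$. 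The main obstacle is precisely this simultaneous requirement on $r$: it must be large enough for the mixing bound to beat the factor $s\asymp\eps^{-1}$, yet small enough for the gap to contribute negligibly to the integral, and reconciling these two constraints is exactly where the assumption $d\ge 5$ enters.
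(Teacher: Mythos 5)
Your argument is correct and runs on the same engine as the paper's proof --- the Markov property of $(\htau(t))$, Cauchy--Schwarz, stationarity of the environment viewed by the particle, and the variance decay of Theorem~\ref{gtrajtau} --- but the decomposition is genuinely different. The paper never inserts an artificial gap: it proves, by induction on $n$ via Laplace transforms, the factorization for increments over \emph{macroscopically separated} intervals $[s_{2i-1},s_{2i}]$, so the decoupling time fed into Theorem~\ref{gtrajtau} is $t=\eps_k^{-1}(s_{2n+1}-s_{2n})$, of the same order $\eps_k^{-1}$ as the length $s$ of the functional, and the variance bound is $C\eps_k^{d/2-2}\times\mathrm{const}$, vanishing exactly when $d\ge5$; the subordinator property of the limit then follows because these separated-interval marginals, together with stationarity and right continuity, determine it. You instead treat \emph{adjacent} increments directly and manufacture the gap yourself, taking $r=\eps^{-c}$ with $4/d<c<1$ so that $(s+r)^2/r^{d/2}\to0$ while the gap is macroscopically negligible --- the window $4/d<1$ is the same condition as $d/2>2$, so $d\ge 5$ enters identically. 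The paper's route is leaner (no truncation step to justify); yours yields asymptotic independence of adjacent increments already at the prelimit level, at the price of the extra gap estimate.

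One small repair is needed in that extra estimate: justifying the truncation by counting deep traps among the sites visited during the gap is not quite airtight, because the walk may spend gap time at a deep trap discovered long before the gap, and Proposition~\ref{tauiid} only governs newly discovered sites. The clean argument is the one the paper uses to prove (\ref{convunifHde}): by stationarity of the environment viewed by the particle under $\ov{\P}$,
$$
\ov{\E}\left[\Hdek(t_1)-\td{H}_1\right]=\eps_k^{1/\alpha}\, r\, \E\left[\tau_0\,\1_{\{\eps_k^{1/\alpha}\tau_0\ge\delta\}}\right]=O(\eps_k r)\xrightarrow[k\to\infty]{}0
$$
by the tail estimate (\ref{regvar}), which gives convergence of the truncation error to $0$ in $L^1$, hence in probability. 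With this substitution your proof is complete.
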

\begin{proof}
Under $\ov{\P}$, the process $\Hde$ inherits the stationarity property from that of $(\htau(t))_{t \ge 0}$. 
What we need to see is the independence of the increments in the limit. We will prove, by induction on $n$, that for any $\lambda_1,\ldots,\lambda_n$, and any $s_1 < \cdots < s_{2n}$~:
\begin{equation}
\begin{split}
& \ov{\E}\left[\exp\left(-\sum_{i=1}^n \lambda_i (\Hdo(s_{2i}) - \Hdo(s_{2i-1}))\right)\right] \\
& \qquad  = \prod_{i = 1}^n \ov{\E}\left[\exp\big(\lambda_i (\Hdo(s_{2i}) - \Hdo(s_{2i-1})\big)\right].
\end{split}
\end{equation}
The property is obvious when $n = 1$. We assume it to hold up to $n$, and give ourselves $\lambda_1,\ldots,\lambda_{n+1}$, and $s_1 < \cdots < s_{2n+2}$. 
We define 
$$
P_k = \exp\left(-\sum_{i=1}^n \lambda_i (\Hdek(s_{2i}) - \Hdek(s_{2i-1}))\right),
$$
$$
F_k(\tau) = \EEt_0\left[\exp\big(-\lambda_{n+1} (\Hdek(s_{2n+2}-s_{2n}) - \Hdek(s_{2n+1}-s_{2n})\big)\right].
$$
Using the Markov property at time $\eps^{-1} s_{2n}$, we have that
$$
\EEt_0\left[\exp\left(-\sum_{i=1}^{n+1} \lambda_i (\Hdek(s_{2i}) - \Hdek(s_{2i-1}))\right)\right] = \EEt_0[P_k F_k(\htau(\eps^{-1} s_{2n}))],
$$
where we recall that $\htau(\eps^{-1} s_{2n})$ is the environment seen by the particle at time $\eps^{-1} s_{2n}$. What we want to show is precisely that
\begin{equation}
\label{diffvar}
\Big| \ov{\E}[P_k F_k(\htau(\eps^{-1} s_{2n}))] - \ov{\E}[P_k]\ov{\E}[F_k(\htau(\eps^{-1} s_{2n}))] \Big|
\end{equation}
goes to $0$ as $k$ goes to infinity. Indeed, on one hand, the induction hypothesis can be applied on the limit of $\ov{\E}[P_k]$, and on the other, one has
$$
\ov{\E}[F_k(\htau(\eps^{-1} s_{2n}))] = \ov{\E}\left[\exp\left(- \lambda_{n+1} (\Hdek(s_{2n+2}) - \Hdek(s_{2n+1}))\right)\right].
$$
But, by Cauchy-Schwarz inequality, the square of the term in (\ref{diffvar}) is bounded by the product of the variances of $P_k$ and $F_k(\htau(\eps^{-1} s_{2n}))$. As $P_k$ is bounded, it is enough to show that the variance of $F_k(\htau(\eps^{-1} s_{2n}))$ goes to $0$ as $k$ tends to infinity. Due to the stationarity of $(\htau(t))$, the variance of $F_k(\htau(\eps^{-1} s_{2n}))$ is the same as the variance of $F_k(\tau)$. 

Hence, we now proceed to show that $\var(F_k)$ goes to $0$ as $k$ tends to infinity. We begin by rewriting $F_k$, using the definition of $H\de$ given in~(\ref{defHde}), as~:
$$
F_k(\tau) = \EEt_0\left[\exp\left(-\lambda_{n+1} \int_{\eps_k^{-1}(s_{2n+1} - s_{2n})}^{\eps^{-1} (s_{2n+2}-s_{2n})} \eps_k^{1/\alpha} \tau_{\hX_s} \1_{\{ \eps_k^{1/\alpha} \tau_{\hX_s} \ge \delta \}} \ \d s \right)\right].
$$
We define $g_k$ as
$$
g_k(\hX,\tau) = \exp\left(-\lambda_{n+1} \int_{0}^{\eps_k^{-1} (s_{2n+2}-s_{2n+1})} \eps_k^{1/\alpha} \tau_{\hX_s} \1_{\{ \eps_k^{1/\alpha} \tau_{\hX_s} \ge \delta \}} \ \d s \right).
$$
The function $g_k$ depends only on the trajectory up to time $\eps_k^{-1}(s_{2n+2}-s_{2n+1})$, and it is translation invariant. Therefore, letting $f^{(k)}(\tau) = \EEt_0[g_k]$, and $f^{(k)}_t(\tau) = \EEt_0[f^{(k)}(\htau(t))]$, we have (see (\ref{vareq72}))~:
$$
F_k(\tau) = f^{(k)}_{\eps_k^{-1}(s_{2n+1}-s_{2n})}(\tau),
$$
and Theorem~\ref{gtrajtau} implies that, when $d \ge 5$,
$$
\var(F_k) \le C \eps_k^{d/2-2} \frac{(s_{2n+2} - s_{2n})^2}{(s_{2n+1} - s_{2n})^{d/2}},
$$
which goes indeed to $0$ as $k$ tends to infinity.
\end{proof}

The same technique applies as well for $\Lde$ defined in (\ref{defLde}).
\begin{prop}
\label{indepincrL}
If $d \ge 5$, then any limit point (in the sense of the convergence of finite-dimensional distributions) of the laws of $\Lde$ under $\ov{\P}$ is the law of a subordinator.
\end{prop}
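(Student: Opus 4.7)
The plan is to mirror the argument of Proposition~\ref{indepincr} essentially verbatim, taking advantage of the fact that the integrand defining $L_\delta^{(\eps)}$ is \emph{bounded} (since $h$ is a test function), which makes Theorem~\ref{gtrajtau} even easier to apply than in the $H_\delta^{(\eps)}$ case. Stationarity of the increments of any limit process is inherited from the stationarity of $(\htau(t))_{t \ge 0}$ under $\ov{\P}$, so the only thing to verify is the asymptotic independence of the increments.

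Fix a sequence $(\eps_k) \to 0$ along which the finite-dimensional distributions of $L_\delta^{(\eps_k)}$ under $\ov{\P}$ converge, and denote the limit by $L_\delta$. I would prove by induction on $n$ that, for any $\lambda_1,\ldots,\lambda_n > 0$ and any $0 \le s_1 < \cdots < s_{2n}$,
$$
\ov{\E}\left[\exp\left(-\sum_{i=1}^n \lambda_i (L_\delta(s_{2i}) - L_\delta(s_{2i-1}))\right)\right] = \prod_{i=1}^n \ov{\E}\left[\exp\big(-\lambda_i (L_\delta(s_{2i}) - L_\delta(s_{2i-1}))\big)\right].
$$
The induction step follows the template of Proposition~\ref{indepincr}: applying the Markov property at time $\eps_k^{-1} s_{2n}$, write the prefactor as $P_k$ (bounded by $1$ since $L_\delta^{(\eps)}$ is increasing and the $\lambda_i$ are positive) and the last factor as $F_k(\htau(\eps_k^{-1} s_{2n}))$, where
$$
F_k(\tau) = \EEt_0\left[\exp\left(-\lambda_{n+1} \int_{\eps_k^{-1}(s_{2n+1}-s_{2n})}^{\eps_k^{-1}(s_{2n+2}-s_{2n})} h(\htau(s)) \1_{\{\eps_k^{1/\alpha} \tau_{\hX_s} \ge \delta\}} \ \d s\right)\right].
$$
By Cauchy--Schwarz and stationarity, it suffices to show that $\var(F_k) \to 0$.

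To do so, introduce the translation-invariant functional of the trajectory up to time $s := \eps_k^{-1}(s_{2n+2} - s_{2n+1})$,
$$
g_k(\hX,\tau) = \exp\left(-\lambda_{n+1} \int_{0}^{\eps_k^{-1}(s_{2n+2}-s_{2n+1})} h(\htau(u)) \1_{\{\eps_k^{1/\alpha} \tau_{\hX_u} \ge \delta\}} \ \d u\right),
$$
and set $f^{(k)}(\tau) = \EEt_0[g_k]$, so that by (\ref{vareq72}), $F_k = f^{(k)}_t$ with $t = \eps_k^{-1}(s_{2n+1}-s_{2n})$. Since $h > 0$ and $\lambda_{n+1} > 0$, we have $\|g_k\|_\infty \le 1$ uniformly in $k$, and Theorem~\ref{gtrajtau} yields
$$
\var(F_k) \le C \frac{(s+t)^2}{t^{d/2}} = C\, \eps_k^{d/2 - 2} \frac{(s_{2n+2} - s_{2n})^2}{(s_{2n+1} - s_{2n})^{d/2}},
$$
which tends to $0$ as $k \to \infty$ whenever $d \ge 5$. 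Combined with the induction hypothesis applied to $\ov{\E}[P_k]$ and the identification $\ov{\E}[F_k(\htau(\eps_k^{-1} s_{2n}))] = \ov{\E}[\exp(-\lambda_{n+1}(L_\delta^{(\eps_k)}(s_{2n+2}) - L_\delta^{(\eps_k)}(s_{2n+1})))]$, this closes the induction and shows that $L_\delta$ has independent stationary increments, hence is a subordinator.

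There is no substantive new obstacle compared with the proof of Proposition~\ref{indepincr}; the only point requiring care is to observe that \emph{boundedness of $h$} replaces the role played in the $H$-case by the truncation at level $\delta$ (which there provided an $\eps^{-1/\alpha}\tau \le $ something bounded only after a gymnastic reading). Here one simply gets $\|g_k\|_\infty \le 1$ for free, which makes the application of Theorem~\ref{gtrajtau} immediate.
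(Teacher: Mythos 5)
Your proof is correct and is exactly the adaptation the paper intends: its own proof of this proposition consists of the single remark that the technique of Proposition~\ref{indepincr} applies verbatim, and you have carried that out faithfully (including the observation that $\|g_k\|_\infty \le 1$, which in fact holds automatically in both the $L$- and the $H$-case because $g_k$ is the exponential of a nonpositive quantity, so no ``gymnastics'' with the truncation is needed there either). The only point deserving a remark is that $h(\htau(u))$ depends on the environment in a neighbourhood of $\hX_u$ rather than only on $\tau_{\hX_u}$, so $g_k$ is not literally of the form (\ref{gform}); one must invoke the version of the variance decay estimate of \cite{vardecay} valid for functionals of the environment seen along the trajectory, an extension the paper already relies on implicitly.
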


%
%
%
%
%
%
\section{The exploration process}
\label{s:explo}
\setcounter{equation}{0}

In this section, we define a way to explore the $1$-neighbourhood of the trajectory of the walk, and state some of its properties. Let $\gamma = (\gamma_n)$ be a (finite or infinite) nearest-neighbour path on $\Z^d$. The set of sites we would like to explore is
\begin{equation}
\label{defmcD}
\mcD(\gamma) = \{ x \in \Z^d : \exists n \quad \|x - \gamma_n \| \le 1 \}.
\end{equation}
Consider the sequence of sites~:
$$
\mathcal{S} = (\gamma_1+z)_{|z |\le 1}, (\gamma_2+z)_{|z| \le 1}, \ldots, (\gamma_n+z)_{|z| \le 1}, \ldots
$$
where $(x+z)_{|z| \le 1}$ is enumerated in some predetermined order. It is clear that $\mathcal{S}$ spans $\mcD(\gamma)$. We can define 
\begin{equation}
\label{defxn}
\mathcal{S'} = x_1(\gamma), \ldots, x_{n}(\gamma), \ldots
\end{equation}
as the sequence $\mathcal{S}$ with repetitions removed. We call $(x_n(\gamma))$ the \emph{exploration process} (for the path $\gamma$), and say that a site is \emph{discovered} (by the path $\gamma$) if it belongs to $\mcD(\gamma)$. 

Let $Y$ be the discrete-time random walk associated with $\hX$. We will be mainly interested in the exploration process associated with the random walk, namely $(x_n(Y))_{n \in \N}$. As $Y$ is an irreducible Markov chain on an infinite state space, it is clear that it visits an infinite number of distinct sites, which implies that $x_n(Y)$ is well defined for all $n$.

We write $\mu_0$ for the law of $\tau_0$ under the measure $\P$. 
\begin{prop}
\label{tauiid}
Under $\ov{\P}$, the random variables $(\tau_{x_n(Y)})_{n \in \N}$ are independent and distributed according to $\mu_0$.
\end{prop}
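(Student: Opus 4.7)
The strategy is to expose the random walk and its environment simultaneously through a ``lazy revelation'' coupling, generating the $\tau$-values on demand as the walk meets new sites, and to arrange the order of revelations to coincide with the exploration process. Two facts make this work. First, an algebraic point: the jump rates $(\tau_x\tau_y)^a$ give the one-step transition probability of $Y$ from $x$ to a neighbour $y$ equal to
$$
\frac{\tau_y^a}{\sum_{y'\sim x}\tau_{y'}^a},
$$
which depends on $\tau_z$ only for $z\sim x$, not on $\tau_x$ itself. Second, under $\P$ the family $(\tau_x)$ is i.i.d., so the value at any site not yet examined is a fresh $\mu_0$-sample, independent of the past.

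Concretely, I would take i.i.d.~$\mu_0$ variables $(\xi_n)_{n\in\N}$ and an independent auxiliary sequence $(U_k)_{k\ge 0}$ used to resolve the walk's transitions. Starting from $Y_0 = 0$, at each walk step $k$ I would run through the list $(Y_k+z)_{|z|\le 1}$ in the predetermined order defining the exploration process; whenever I meet a site $y$ whose $\tau$-value has not yet been assigned, I would take the next unused $\xi$ and set $\tau_y$ equal to it. Once the neighbourhood of $Y_k$ has been revealed, I would use $U_k$ to sample $Y_{k+1}$ with the prescribed probabilities. Sites that are never visited would receive the leftover $\xi$'s at the end. By the two observations above, the joint law of $(Y,\tau)$ produced by this coupling coincides with $\ov{\P}$.

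The conclusion then follows from a direct identification: the sequence of sites whose $\tau$-value is revealed by this procedure, in the order of revelation, is exactly $(x_n(Y))_{n\in\N}$. Indeed, for $k\ge 1$ the site $Y_k$ is a neighbour of $Y_{k-1}$ and was already revealed at step $k-1$, so the genuinely new revelations at walk step $k$ coincide with the new entries added by the exploration process at that step, in the same predetermined sub-order. Therefore $\tau_{x_n(Y)}=\xi_n$ for every $n\in\N$, and the desired i.i.d.~conclusion follows.

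The only point requiring verification is that the coupling really reproduces $\ov{\P}$. This amounts to an induction on $k$ checking that, conditionally on everything revealed through walk step $k$, the $\tau$-values at the un-revealed sites still form an i.i.d.~$\mu_0$ field, so that the next batch is indeed a fresh sample; this is immediate because the identity of the sites to be newly revealed at step $k$ is measurable with respect to the past, and the $\xi_n$'s are i.i.d. I expect this bookkeeping to be the only slightly delicate step, but it is entirely standard.
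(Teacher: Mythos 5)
Your coupling is correct and rests on exactly the two facts the paper's proof uses: the law of the walk up to the moment a site is discovered depends only on $\tau$ restricted to the previously discovered set (so never on the value at the newly discovered site), and $\P$ is a product measure, so that value is a fresh $\mu_0$-sample. The paper phrases this as a factorization of $\ov{\P}[\tau_{x_1(Y)}\le t_1,\ldots,\tau_{x_n(Y)}\le t_n]$ obtained by partitioning over the path up to the discovery of the $n^{\text{th}}$ site and inducting on $n$; your deferred-revelation construction is the same argument in constructive form.
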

\begin{proof}
Let $\gamma = (\gamma_i)_{1 \le i \le k}$ be a nearest-neighbour path, and $\gamma^{\leftarrow} = (\gamma_i)_{1 \le i \le k-1}$ be the (possibly empty) path that follows $\gamma$ but stops one step earlier. Let $E_n$ be the set of paths such that $\gamma$ discovers at least $n$ sites $x_1(\gamma),\ldots,x_n(\gamma)$, but $\gamma^{\leftarrow}$ does not. 

For $\gamma = (\gamma_i)_{1 \le i \le k}$, we write $Y = \gamma$ when $Y$ and $\gamma$ coincide up to time $k$. Observe that the events $\{Y = \gamma\}$ form a partition of the probability space when $\gamma$ ranges in $E_n$ (up to a set of null measure). For any real numbers $t_1,\ldots, t_n$, we can therefore decompose 
\begin{equation}
\label{indeptau}
\ov{\P}[\tau_{x_1(Y)} \le t_1,\ldots, \tau_{x_n(Y)} \le t_n]
\end{equation}
along this partition, which leads to~:
\begin{equation*}
\begin{split}
& \sum_{\gamma \in E_{n}} \P\PPt_0[Y=\gamma, \tau_{x_1(\gamma)} \le t_1,\ldots, \tau_{x_n(\gamma)} \le t_n] \\
& \qquad = \sum_{\gamma \in E_{n}} \E\left[ \PPt_0[Y=\gamma, \tau_{x_1(\gamma)} \le t_1,\ldots, \tau_{x_{n-1}(\gamma)} \le t_{n-1}] \ \1_{\{\tau_{x_n(\gamma)} \le t_n\}} \right] ,
\end{split}
\end{equation*}
as $\tau_{x_n(\gamma)}$ is constant under $\PPt_0$. The event $\{Y = \gamma \}$ depends only on $(\tau_x)_{x \in \mathcal{D}(\gamma^\leftarrow)}$. As $\gamma \in E_{n}$, the shortened path $\gamma^\leftarrow$ does not discover $x_n(\gamma)$. Hence, of the two terms in the $\E$ expectation above, the first depends only on $(\tau_{x_k(\gamma)})_{k < n}$, while the second depends only on $\tau_{x_n(\gamma)}$. These two terms are thus independent, and one can rewrite the whole sum as
$$
\sum_{\gamma \in E_{n}} \P\PPt_0[Y=\gamma, \tau_{x_1(\gamma)} \le t_1,\ldots, \tau_{x_{n-1}(\gamma)} \le t_{n-1}] \ \P[\tau_{x_n(\gamma)} \le t_n].
$$
Using the translation invariance of $\P$, the probability (\ref{indeptau}) thus equals
$$
\ov{\P}[\tau_{x_1(Y)} \le t_1,\ldots, \tau_{x_{n-1}(Y)} \le t_{n-1}] \ \P[\tau_0 \le t_n].
$$
The proof of the proposition is then obtained by induction.
\end{proof}

From now on, we will simply write $x_n$ for $x_n(Y)$.
%
%
%
%
%
%
\section{Asymptotic behaviour of the range}
\label{s:range}
\setcounter{equation}{0}

We say that $x \in \Z^d$ is \emph{discovered before time }$t$ if $x \in \mcD((\hX_s)_{s \le t})$. Let $r(t)$ be the number of such sites~:
\begin{equation}
\label{defrt}
r(t) = | \mcD((\hX_s)_{s \le t}) | .
\end{equation}

In this section, we will show a law of large numbers for $r(t)$.
In order to do so, a convenient tool is the subadditive ergodic theorem. Its use requires that we ensure first that $r(t)$ is integrable. We will in fact find an upper bound for $\E[r(t)^2]$, which will be useful later on in order to show the uniform integrability of $(L\de(t))_{\eps > 0}$ (Lemma~\ref{unifint}).
\begin{prop}
\label{discovered}
There exists a constant $C > 0$ such that, for any $t \ge 0$~:
$$
\ov{\E}[r(t)^2] \le C (t+1)^2.
$$
\end{prop}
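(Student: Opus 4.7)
The plan is to reduce, via a subadditivity argument, the bound for arbitrary $t$ to the case $t=1$, and then to control $\ov{\E}[r(1)^2]$ by a geometric estimate on the maximal displacement of $\hX$.

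First, I establish subadditivity in distribution under $\ov{\P}$. For $0 \le s \le t$, every site discovered in $[s,t]$ lies in the $1$-neighbourhood of the walk restricted to $[s,t]$, so
$$
r(t) - r(s) \le \bigl|\mcD((\hX_u)_{s \le u \le t})\bigr|.
$$
Using the Markov property at time $s$ together with the stationarity of the environment viewed by the particle (Proposition~\ref{ergodicity}), which ensures that $\htau(s) \sim \P$ under $\ov{\P}$, the right-hand side has the same law as $r(t-s)$. Writing $f(t) := \ov{\E}[r(t)^2]$ and applying $(a+b)^2 \le 2a^2 + 2b^2$ gives
$$
f(t) \le 2 f(s) + 2 f(t-s).
$$
Choosing $s = t/2$ and iterating on dyadic scales yields $f(t) \le C(t+1)^2 f(1)$, so it is enough to prove $\ov{\E}[r(1)^2] < \infty$.

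Second, a geometric bound: since every discovered site lies within $L^\infty$-distance $M(1)+1$ of the origin, where $M(1) := \sup_{s \le 1} \|\hX_s\|_\infty$,
$$
r(1) \le (2M(1)+3)^d.
$$
It therefore remains to prove $\ov{\E}[M(1)^{2d}] < \infty$.

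Third, bounding moments of $M(1)$ is the main obstacle. The naive bound $M(1) \le N(1)$, with $N(1)$ the number of jumps of $\hX$ up to time $1$, is useless because the annealed mean jump rate
$$
\ov{\E}[\tau_{\hX_s}^a c_{\hX_s}] = 2d \, (\E[\tau_0^a])^2
$$
may be infinite (whenever $a \ge \alpha$), so $\ov{\E}[N(1)]$ need not be finite. The finer phenomenon to exploit is that $\hX$ bounces locally near deep traps without displacing: a trap with $\tau_x = T$ produces of order $T^a$ jumps but contributes only $O(1)$ to the displacement. I plan to establish $\ov{\P}[M(1) \ge K] \le C_p K^{-p}$ for every $p \ge 1$ by splitting on the event
$$
A_K = \{\max_{x \in B_\infty(0,K)} \tau_x \le L(K)\}.
$$
On $A_K$, the conductances of $\hX$ inside the ball are bounded by $L(K)^{2a}$, so standard Chernoff/Poisson-type bounds on the displacement apply; on $A_K^c$, the iid hypothesis on $\tau$ gives $\ov{\P}[A_K^c] \le (2K+1)^d L(K)^{-\alpha}$, which is made negligible by an appropriate choice of $L(K)$. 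If a direct tuning of $L(K)$ cannot balance both contributions for the full parameter range, the bouncing heuristic above, or the quenched heat-kernel upper bounds for $\hX$ from \cite{bardeu}, will be needed to close the gap.

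The only non-routine step is this third one: showing that, in spite of possibly non-integrable local jump rates, the maximal displacement of $\hX$ in unit time has all polynomial moments under $\ov{\P}$. The first two steps are elementary consequences of stationarity of the environment viewed by the particle and the geometry of the $1$-neighbourhood.
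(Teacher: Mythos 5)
There is a genuine gap, and it sits exactly where you locate it yourself: step three. Your first two steps are fine (the subadditivity-in-law reduction to $\ov{\E}[r(1)^2]<\infty$ via stationarity of the environment viewed by the particle, and the volume bound $r(1)\le(2M(1)+3)^d$), but they only displace the difficulty onto the moments of the maximal displacement $M(1)$, and the truncation you propose cannot be tuned to close it. To make $\ov{\P}[A_K^c]\le (2K+1)^d L(K)^{-\alpha}$ decay like $K^{-p}$ you need $L(K)\gtrsim K^{(d+p)/\alpha}$; but on $A_K$ the total jump rate per site is only bounded by $2d\,L(K)^{2a}$, so the Chernoff/Poisson bound on the number of jumps (hence on the displacement) in unit time requires $L(K)^{2a}\ll K$, i.e.\ $L(K)\ll K^{1/(2a)}$. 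These two constraints are compatible only when $2a(d+p)<\alpha<1$, which fails for the range of exponents you need ($p>2d$, $d\ge 5$) except for vanishingly small $a$. The ``bouncing'' phenomenon you invoke to repair this — a site of depth $T$ generates $O(T^a)$ jumps but $O(1)$ net displacement — is the correct heuristic, but it is precisely the point that needs a rigorous mechanism, and neither your sketch nor an appeal to the quenched heat-kernel bounds of \cite{bardeu} (which are off-diagonal estimates valid for a.e.\ environment at large, environment-dependent times, not annealed exit estimates at time $1$) supplies one.

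For comparison, the paper never passes through the displacement at all. It bounds $r(t)\le(2d+1)R(t)$ by the number of \emph{visited} sites, writes $R(t)^2$ as a double sum of hitting indicators, factorizes via the Markov property, and then converts hitting probabilities into occupation times using Lemma~\ref{hitprobatime}. The obstruction you identified (arbitrarily large jump rates prevent a direct comparison between $\PPt_0[\mathcal{T}_x\le t]$ and the time spent at $x$) is handled there by enlarging $x$ to the cluster $\ov{\mathcal{V}}_\tau(x)$ of adjacent ``bad'' sites (sites with total jump rate exceeding a fixed $\eta$), whose inner boundary consists of good sites; a percolation argument shows these clusters have exponentially decaying size. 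That is the rigorous implementation of your bouncing heuristic, and it is the ingredient your proposal is missing.
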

\begin{proof}
The proof is similar to the one of the second part of \cite[Proposition 6.2]{vardecay}. Let $R(t)$ be the cardinal of the range of the random walk at time $t$ (without taking into account sites that are discovered but not visited)~:
$$
R(t) = \left| \{ \hX_s, s \le t\} \right|.
$$
It is clear from the definition (\ref{defmcD}) that there are at most $(2d+1)$ sites discovered associated to each site visited, so that
$$
r(t) \le (2d+1) R(t).
$$
We will therefore focus on $R(t)$. For $x \in \Z^d$, let $\mathcal{T}_x$ be the hitting time of $x$~:
\begin{equation}
\label{defmclT}
\mathcal{T}_x = \inf \{ s \ge 0 : \hX_s = x \}.
\end{equation}
One can rewrite the range as
\begin{equation}
\label{decompRt}
R(t) = \sum_{x \in \Z^d} \1_{\{ \mathcal{T}_x \le t \}},
\end{equation}
from which we derive an expression for the square of $R(t)$~:
\begin{equation}
\label{decompRt2}
R(t)^2 = 2 \sum_{x,y \in \Z^d} \1_{ \{ \mathcal{T}_x \le \mathcal{T}_y \le t \}}.
\end{equation}
The general term of this sum can be bounded from above, using the Markov property of the random walk at time $\mathcal{T}_x$, as follows~:
\begin{eqnarray*}
\PPt_0[\mathcal{T}_x \le \mathcal{T}_y \le t] & \le & \PPt_0[\mathcal{T}_x \le t, \ \mathcal{T}_x \le \mathcal{T}_y \le \mathcal{T}_x + t ] \\
& \le & \PPt_0[\mathcal{T}_x \le t] \ \PPt_x[\mathcal{T}_y \le t].
\end{eqnarray*}
Hence, using equations (\ref{decompRt2}) and then (\ref{decompRt}), we obtain that
\begin{eqnarray}
\label{decompdiscov1}
\EEt_0[R(t)^2] & \le & \sum_{x, y \in \Z^d} \PPt_0[\mathcal{T}_x \le t] \ \PPt_x[\mathcal{T}_y \le t]  \notag \\
& \le & \sum_{x \in \Z^d} \PPt_0[\mathcal{T}_x \le t] \ \EEt_x[R(t)].
\end{eqnarray}

One would like to compare the probability to hit $x$ with the total time spent on this site, which is easier to handle. However, because there exist sites with arbitrarily large jump rates, there is no clear comparison between these two quantities. We will therefore create a larger set, written $\ov{\mathcal{V}}_\tau(x)$, that contains $x$ and so that it does take some time to exit this set.

Let $p_\tau(x)$ be the total jump rate of site $x$~:
$$
p_\tau(x) = \sum_{y \sim x} (\tau_x \tau_y)^a.
$$
For $\eta > 0$, we say that a point $x \in \Z^d$ is \emph{good} in the environment $\tau$ if $p_\tau(x) \le \eta$ ; we say that it is \emph{bad} otherwise. We need to fix $\eta$ large enough, so that
\begin{equation}
\label{0isbad}
\P[0 \text{ is bad}] < \frac{1}{(2d)^{(2d+1)}}.
\end{equation}
Following \cite[(6.10)]{vardecay}, we define $\ov{\mathcal{V}}_\tau(x)$ by~:
\begin{equation}
\label{gammal}
y \in \ov{\mathcal{V}}_\tau(x) 
\Leftrightarrow \exists \gamma = (\gamma_1, \ldots, \gamma_l) : \gamma_1 = x, \gamma_l = y, \gamma_2,\ldots,\gamma_{l-1} \text{ bad points},
\end{equation}
where $\gamma$ is a nearest-neighbour path. 
One can check that $\ov{\mathcal{V}}_\tau(x)$ contains the site~$x$, and that any point in the inner boundary of $\ov{\mathcal{V}}_\tau(x)$ is a good point. As a consequence, the expected time spent inside this set cannot be too small as compared to the probability to hit $x$. More precisely, we have the following result.

\begin{lem}
\label{hitprobatime}
For almost every $\tau$, the following holds~:
\begin{equation*}
\PPt_0[\mathcal{T}_x \le t] \le e \eta \int_0^{t+1} \PPt_0[\hX_s \in \ov{\mathcal{V}}_\tau(x)] \ \d s .
\end{equation*}
\end{lem}
We refer to \cite[Lemma 6.3]{vardecay} for a proof of this lemma. Using this result back into equation (\ref{decompdiscov1}), one obtains~:
\begin{eqnarray}
\label{decompdiscov2}
\EEt_0[R(t)^2] & \le & e \eta \sum_{x \in \Z^d} \EEt_x[R(t)] \int_0^{t+1} \PPt_0[\hX_s \in \ov{\mathcal{V}}_\tau(x)]  \ \d s  \notag \\
& \le & e \eta \sum_{x,x' \in \Z^d} \EEt_x[R(t)] \1_{\{x' \in \ov{\mathcal{V}}_\tau(x)\}} \int_0^{t+1}  \PPt_0[\hX_s = x'] \ \d s .
\end{eqnarray}
We introduce the following function of the environment~:
\begin{equation}
\label{defWt}
W_t(\tau) = \sum_{x \in \Z^d} \EEt_x[R(t)] \1_{0 \in \{\ov{\mathcal{V}}_\tau(x)\}}.
\end{equation}
It is not hard to check that for any $x' \in \Z^d$, one has
$$
W_t(\theta_{x'} \ \tau) = \sum_{x \in \Z^d} \EEt_{x+x'}[R(t)] \1_{\{x' \in \ov{\mathcal{V}}_\tau(x+x')\}} = \sum_{x \in \Z^d} \EEt_x[R(t)] \1_{\{x' \in \ov{\mathcal{V}}_\tau(x)\}},
$$
and using this fact together with inequality (\ref{decompdiscov2}), one obtains~:
\begin{eqnarray*}
\ov{\E}[R(t)^2] & \le & e \eta \int_0^{t+1} \sum_{x' \in \Z^d} \ov{\E}[W_t(\theta_{x'} \ \tau) \1_{\{\hX_s = x'\}}] \ \d s \\
& \le & e \eta \int_0^{t+1} \ov{\E}[W_t(\htau(s))] \ \d s.
\end{eqnarray*}
The measure $\P$ being invariant for the process $(\htau(s))_{s \ge 0}$, we are led to
\begin{equation}
\label{justavantlemperco}
\ov{\E}[R(t)^2]  \le  e \eta (t+1) \E[W_t(\tau)].
\end{equation}
Proposition~\ref{discovered} will therefore be proved once we have shown the following lemma.
\begin{lem}
\label{lemperco}
There exists $C > 0$ such that, for any $t \ge 0$, one has
$$
\E[W_t(\tau)] \le C (t+1).
$$
\end{lem}
\begin{proof}[Proof of Lemma~\ref{lemperco}]
We will use Lemma~\ref{hitprobatime} once more. Indeed, one has
\begin{eqnarray}
\label{decompR}
\EEt_x[R(t)] & = & \sum_{y \in \Z^d} \PPt_x[\mathcal{T}_y \le t] \notag \\
& \le & e \eta \sum_{y \in \Z^d} \int_0^{t+1} \PPt_x[\hX_s \in \ov{\mathcal{V}}_\tau(y)] \ \d s \notag \\
& \le & e \eta \sum_{y,y' \in \Z^d} \1_{\{ y' \in \ov{\mathcal{V}}_\tau(y) \}} \int_0^{t+1} \PPt_x[\hX_s = y'] \ \d s.
\end{eqnarray}
Let $w(\tau)$ be defined by
$$
w(\tau) = \sum_{y \in \Z^d} \1_{\{ 0 \in \ov{\mathcal{V}}_\tau(y) \}}.
$$
Then, inequality (\ref{decompR}) can be rewritten as
$$
\EEt_x[R(t)] \le e \eta \int_0^{t+1} \EEt_x[w(\htau(s))] \ \d s.
$$
It then follows from the definition of $W_t$ in (\ref{defWt}) that
$$
\E[W_t(\tau)] \le e \eta \sum_{x \in \Z^d} \int_{0}^{t+1} \E\left[ \EEt_x[w(\htau(s))] \1_{\{0 \in \ov{\mathcal{V}}_\tau(x)\}} \right]  \ \d s.
$$
Using Cauchy-Schwarz inequality, we obtain~:
$$
\E[W_t(\tau)] \le e \eta \sum_{x \in \Z^d} \int_{0}^{t+1} \E\left[ (\EEt_x[w(\htau(s))])^2 \right]^{1/2} \P[0 \in \ov{\mathcal{V}}_\tau(x)]^{1/2}  \ \d s .
$$
Moreover, the function 
$$
s \mapsto \E\left[ (\EEt_x[w(\htau(s))])^2 \right]
$$
is decreasing, as $\EEt_x[w(\htau(s))]$ is the image of $w$ by the semi-group associated to the process $(\htau(s))_{s \ge 0}$, hence we are finally led to~:
\begin{equation}
\label{justavantdeuxieme}
\E[W_t(\tau)] \le e \eta (t+1) \E[w(\tau)^2] \sum_{x \in \Z^d} \P[0 \in \ov{\mathcal{V}}_\tau(x)]^{1/2}.
\end{equation}
What we have to prove is thus that, on one hand, the sum appearing above is finite, and on the other hand, that the random variable $w(\tau)^2$ is integrable. 

Due to the symmetry of the definition of $\ov{\mathcal{V}}_\tau(x)$ in (\ref{gammal}), it is clear that
$$
0 \in \ov{\mathcal{V}}_\tau(x) \Leftrightarrow x \in \ov{\mathcal{V}}_\tau(0),
$$
and as a consequence, $w(\tau) = |\ov{\mathcal{V}}_\tau(0)|$. Let $\mathcal{B}(r)$ be the ball of radius $r$, with respect to the graph norm. By a percolation argument and using (\ref{0isbad}) (see \cite[Lemma 6.4 (2)]{vardecay} for details), one can see that the probability that $\ov{\mathcal{V}}_\tau(0)$ is not contained in $\mathcal{B}(r)$ decays exponentially as $r$ goes to infinity. From this fact, one can check that the two conditions mentioned above hold, which ends the proof of the lemma.
\end{proof}
The result of Lemma~\ref{lemperco}, together with inequality (\ref{justavantlemperco}), implies Proposition~\ref{discovered}.
\end{proof}

Once the integrability of $r(t)$ is ensured, a law of large numbers follows as a consequence of the subadditive ergodic theorem.

\begin{prop}
\label{range}
If $d \ge 3$, then there exists a constant $c > 0$ such that
\begin{equation*}
\label{eq:range}
\frac{r(t)}{t} \ \xrightarrow[t \to +\infty]{\ov{\P}\text{-a.s.}} c.
\end{equation*}
\end{prop}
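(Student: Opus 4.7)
The plan is to apply Kingman's continuous-time subadditive ergodic theorem to the two-parameter family $\mclN(s,t) := |\mcD((\hX_u)_{s \le u \le t})|$, of which $r(t) = \mclN(0,t)$ is the diagonal. I would first verify the three standard hypotheses. Subadditivity $\mclN(s,u) \le \mclN(s,t) + \mclN(t,u)$ for $s \le t \le u$ is immediate from~(\ref{defmcD}), since the $1$-neighbourhood of a concatenated path is contained in the union of the individual $1$-neighbourhoods. Stationarity under $\ov{\P}$ of the law of $\mclN(s+h, t+h)$ in $h$ follows from Proposition~\ref{ergodicity}: $\mclN(s+h, t+h)$ can be written as a translation-invariant functional of the environment-viewed-by-the-particle trajectory $(\htau(u))_{s+h \le u \le t+h}$, whose law is $h$-invariant. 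Integrability $\ov{\E}[r(t)] < +\infty$ is granted by Proposition~\ref{discovered} combined with Jensen's inequality.

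Kingman's theorem would then yield $r(t)/t \to c$ $\ov{\P}$-almost surely, with $c = \inf_{t > 0} \ov{\E}[r(t)]/t \in [0,+\infty)$ deterministic by the ergodicity half of Proposition~\ref{ergodicity}.

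The main obstacle, as I see it, is to rule out $c = 0$. To this end I would note that $r(t) \ge R(t)$, where $R(t)$ is the cardinal of the range as in the proof of Proposition~\ref{discovered}, since every visited site lies in its own $1$-neighbourhood. It therefore suffices to establish a positive linear lower bound on $\ov{\E}[R(t)]$. Because $\tau_x \ge 1$ and $a \in [0,1]$, every edge conductance $(\tau_x \tau_y)^a$ is at least $1$; by Rayleigh's monotonicity principle, the effective resistance from the origin to infinity in the weighted network is dominated by the corresponding quantity for simple random walk, which is finite in $d \ge 3$. Hence $\hX$ is transient for $\P$-almost every $\tau$. Finally, I would apply Birkhoff's ergodic theorem to the discrete-time skeleton $Y$ viewed through the stationary ergodic environment-viewed-by-the-particle process, using the functional ``the walk never revisits its current site''; by time reversal, the resulting ergodic limit identifies $\lim R(n)/n$ with the (strictly positive) escape probability of $Y$, so that $c > 0$.
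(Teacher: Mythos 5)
Your overall strategy is the same as the paper's: Kingman's subadditive ergodic theorem for the existence of the limit (with $\ov{\E}[r(t)]<\infty$ supplied by Proposition~\ref{discovered} and the constancy of $c$ by ergodicity), and transience of $\hX$ in $d\ge 3$ (via Rayleigh monotonicity and the uniform lower bound on the conductances, exactly as in Propositions~\ref{compC}--\ref{compG}) to rule out $c=0$. Those parts are fine.

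The gap is in the last step, where you run the ergodic/time-reversal argument on the discrete skeleton $Y$. The chain $Y$ is \emph{not} reversible for the counting measure: its reversible measure has weights $p_\tau(x)=\sum_{y\sim x}(\tau_x\tau_y)^a$. Two things break as a consequence. First, the environment viewed by $Y$ is not stationary under $\ov{\P}$; its natural invariant law has density proportional to $p_\tau(0)$ with respect to $\P$, and since $\E[\tau_0^a]=\infty$ whenever $a\ge\alpha$ (recall $\P[\tau_0\ge y]\sim y^{-\alpha}$ with $\alpha<1$), this tilted measure need not even be normalisable, so Birkhoff's theorem cannot be invoked for the sequence $(\theta_{Y_k}\,\tau)_k$ under $\ov{\P}$. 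Second, the time-reversal identity for $Y$ reads $p_\tau(0)\,\PPt_0[Y_1=y_1,\ldots,Y_k=y_k]=p_\tau(y_k)\,\PPt_{y_k}[Y_1=y_{k-1},\ldots,Y_k=0]$, so reversing the event ``$Y_k$ is newly visited'' produces the non-constant factor $p_\tau(Y_k)/p_\tau(0)$, which does not cancel after averaging over $\tau$; the limit you would obtain is a $p_\tau$-weighted quantity, not the escape probability of $Y$ under $\ov{\P}$, and it is the latter measure with respect to which $c=\lim\ov{\E}[r(t)]/t$ is defined.

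The paper sidesteps both problems by never leaving continuous time: it bounds $r(n)$ from below by the number of integer times $k\le n$ at which $\hX_k\notin\mcD((\hX_s)_{s\le k-1})$, and reverses this event using the reversibility of $\hX$ for the \emph{counting} measure, which makes the density factor identically $1$ and reduces everything to $\ov{\P}[0\notin\mcD((\hX_s)_{s\ge1})]>0$, i.e.\ to transience. Your argument can be repaired along the same lines by replacing $Y$ with $\hX$ sampled at integer times (for which $(\htau(k))_k$ is stationary and ergodic under $\ov{\P}$ by Proposition~\ref{ergodicity}); one can then even avoid time reversal altogether by summing the indicators $\1_{\{\hX_s\neq\hX_k\ \forall s\ge k+1\}}$, which count last visits and hence also bound $R(n)$ from below. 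As written, however, the positivity step does not go through.
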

\begin{proof}
The subadditive ergodic theorem (see \cite{kingman}), together with Proposition~\ref{discovered}, ensures that there exists a random variable $c$ such that, $\ov{\P}$-almost surely and in $L^1(\ov{\P})$~:
\begin{equation}
\label{subadditive}
\frac{r(t)}{t} \ \xrightarrow[\substack{t \to +\infty \\ t \in \N}]{} c.
\end{equation}
One can in fact omit the restriction $t \in \N$ above. Indeed, if $n$ is an integer and $n \le t < n+1$, the subadditivity property gives~:
$$
0 \le r(t) - r(n) \le |\mathcal{D}((\hX_s)_{n \le s < n+1})|.
$$
Moreover, Birkhoff's ergodic theorem ensures the almost sure convergence of 
$$
\frac{1}{n} \sum_{k = 1}^n |\mathcal{D}((\hX_s)_{k \le s < k+1})|
$$
to some random variable. In particular, 
$$
\frac{1}{n} |\mathcal{D}((\hX_s)_{n \le s < n+1})|
$$
converges to $0$ almost surely, which implies that one can take out the restriction $t \in \N$ in equation (\ref{subadditive}). Moreover, a consequence of the ergodicity given by Proposition~\ref{ergodicity} is that $c$ is in fact constant. What is left is to check that $c$ is strictly positive. For any integer $n$, we have the following convenient lower bound~:
$$
r(n) \ge \sum_{k = 1}^{n} \1_{\{ \hX_{k} \notin \mcD((\hX_s)_{s \le k-1}) \}}.
$$
Indeed, the condition $\hX_{k} \notin \mcD((\hX_s)_{s \le k-1})$ implies that the site $\hX_{k}$ has been discovered in the time interval $(k-1,k]$. Integrating this inequality, we have~:
\begin{equation}
\label{ineq:c>0}
\ov{\E}[r(n)] \ge \sum_{k=1}^{n} \ov{\P}[\hX_{k} \notin \mcD((\hX_s)_{s \le k-1})].
\end{equation}
Note that, due to the reversibility of the walk,
$$
\PPt_0[\hX_{k} \notin \mcD((\hX_s)_{s \le k-1}), \hX_{k} = x] = \PPt_x[x \notin \mcD((\hX_s)_{1 \le s \le k}), \hX_{k} = 0].
$$
Using once again the fact that $(\hX_t-x)$ has same law under $\PPt_x$ as $(\hX_t)$ under $\PP^{\theta_x \tau}_0$, the latter equals~:
$$
\PP^{\theta_x \tau}_0[0 \notin \mcD((\hX_s)_{1 \le s \le k}), \hX_{k} = -x].
$$ 
Using the translation invariance of $\P$, this computation leads us to
\begin{equation*}
\begin{split}
& \ov{\P}[\hX_{k} \notin \mcD((\hX_s)_{s \le k-1}) ] \\
& \qquad = \sum_{x \in \Z^d} \P\PP^{\theta_x \tau}_0[0 \notin \mcD((\hX_s)_{1 \le s \le k}), \hX_{k} = -x] \\
& \qquad  = \sum_{x \in \Z^d} \ov{\P}[0 \notin \mcD((\hX_s)_{1 \le s \le k}), \hX_{k} = -x] \\
& \qquad = \ov{\P}[0 \notin \mcD((\hX_s)_{1 \le s \le k})] \\
& \qquad \ge \ov{\P}[0 \notin \mcD((\hX_s)_{s \ge 1})].
\end{split}
\end{equation*}
In order to show that $c$ is strictly positive, and considering inequality (\ref{ineq:c>0}), it is enough to show that $\ov{\P}[0 \notin \mcD((\hX_s)_{s \ge 1})]$ is strictly positive, which amounts to checking that the random walk is transient. This fact is contained in Proposition~\ref{compG} of the Appendix, where it is shown that the Green function $G(\tau)$ is finite.
\end{proof}

%
%
%
%
%
%

\section{Unmatched jumps}
\label{s:jumps}
\setcounter{equation}{0}
We begin by introducing some notation. 
For any $\delta > 0$ and any $\eps > 0$, we define $(r\de(n))_{n \in \N}$ as the increasing sequence that spans the set
\begin{equation}
\label{defRde}
\mathcal{R}\de := \{i \in \N : \eps^{1/\alpha} \tau_{x_i} \ge \delta \}.
\end{equation}
In other words, the $k^{\text{th}}$ site discovered by the random walk is the $n^{\text{th}}$ deep trap discovered if and only if $k = r\de(n)$. Let $T\de(n)$ be the instant when the $n^{\text{th}}$ deep trap is discovered~: 
$$
T\de(n) = \inf \{ t : r(t) \ge r\de(n) \}.
$$
We further define $x\de(n)$ to be the location of the $n^{\text{th}}$ deep trap discovered, which is equal to $x_{r\de(n)}$. Note that $x\de(n)$ and the position of the walk at the instant of discovery $\hX_{T\de(n)}$ are neighbours (or possibly equal if at the origin). The depth of the trap discovered is given by $\tau_{x\de(n)}$, and $\left(\theta_{x\de(n)} \ \tau \right)$ is the environment seen from the trap.

We would like to consider how much time is spent on a deep trap, so we introduce
$$
l\de(n,t) = \int_0^{\eps^{-1} t} \1_{\{\hX_s = x\de(n)\}} \ \d s, \qquad \text{and} \qquad l\de(n) = l\de(n,+\infty).
$$
With this notation at hand, the processes $\Hde$ and $\Lde$ introduced in (\ref{defHde}) and (\ref{defLde}) can be conveniently rewritten as
\begin{equation}
\label{Hderewrit}
\Hde(t) =  \sum_{n = 1}^{+\infty} l\de(n,t) \ \eps^{1/\alpha} \tau_{x\de(n)} ,
\end{equation}
\begin{equation}
\label{Lderewrit}
\Lde(t) = \sum_{n = 1}^{+\infty} l\de(n,t) \ h\left(\theta_{x\de(n)} \ \tau \right).
\end{equation}

Once the random walk has found a deep trap, it will perform several visits to this site, and then leave it forever. These visits happen on a time scale that does not depend on $\eps$. Hence, due to the time renormalization, the function $l\de(n,\cdot)$ tends to look more and more like a step function as $\eps$ goes to $0$. However, some caution is necessary when one wants to give a precise meaning to this closeness. Indeed, the function $l\de(n,\cdot)$ is continuous, and we recall that the set of continuous functions is closed for the usual Skorokhod's $J_1$ topology. In the terminology of \cite{whitt}, the limit process should have jumps that are unmatched in the converging processes. Following \cite{dim2}, we will use Skorokhod's $M_1$ topology, for which jumps can appear in the limit of continuous functions. From the fact that $l\de(n,\cdot)$ is close to a step function, we will be able to show that $H\de$ and $L\de$ are well approximated, respectively, by the processes $\mathcal{H}\de$ and $\mathcal{L}\de$ defined by

\begin{equation}
\label{defmclH}
\mathcal{H}\de(t) = \sum_{n = 1}^{+\infty} l\de(n) \ \eps^{1/\alpha} \tau_{x\de(n)} \ \1_{\{t \ge \eps T\de(n)\}},
\end{equation}
\begin{equation}
\label{defmclL}
\mathcal{L}\de(t) = \sum_{n = 1}^{+\infty} l\de(n) \ h\left(\theta_{x\de(n)} \ \tau \right) \ \1_{\{t \ge \eps T\de(n)\}}.
\end{equation}

This is the content of Proposition~\ref{distM1}. Before stating it, we need to show that the jump instants $(T\de(n))_{n \in \N}$ do not accumulate in the limit, and that $\eps^{1/\alpha} \tau_{x\de(n)}$ is tight. The next proposition shows in fact that, as $\eps$ gets small, the sequence of jump instants tends to behave like a Poisson process. The knowledge of the intensity of the limit process will be useful in the sequel.

\begin{prop}
\label{poisson}
Under $\ov{\P}$, the law of $(\eps T\de(n))_{n \in \N}$ converges to that of a Poisson process of intensity $c/ \delta^\alpha$.
\end{prop}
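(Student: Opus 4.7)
The plan is to decouple the problem into two pieces: first, a Poisson-type limit for the indices $r\de(n)$ of the deep traps among the discovered sites, relying only on Proposition~\ref{tauiid}; and second, a comparison between the discovery time $T\de(n)$ and its index $r\de(n)$, obtained from the law of large numbers for the range (Proposition~\ref{range}).

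For the first piece, Proposition~\ref{tauiid} tells us that under $\ov{\P}$ the depths $(\tau_{x_i})_{i \in \N}$ are i.i.d.\ with law $\mu_0$, so the indicators $\1_{\{\eps^{1/\alpha} \tau_{x_i} \ge \delta\}}$ form an i.i.d.\ Bernoulli sequence with common success probability
$$
p_\eps := \P[\tau_0 \ge \delta \eps^{-1/\alpha}] \ \sim \ \frac{\eps}{\delta^\alpha} \qquad (\eps \to 0),
$$
by~(\ref{regvar}). The random indices $(r\de(n))_{n \in \N}$ are precisely the successive times of success of this Bernoulli process, so the increments $r\de(n) - r\de(n-1)$ (with $r\de(0) := 0$) are i.i.d.\ geometric of parameter $p_\eps$. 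Since $p_\eps \cdot \mathrm{Geo}(p_\eps)$ converges in law to $\mathrm{Exp}(1)$ as $p_\eps \to 0$, a standard finite-dimensional argument on the partial sums yields joint convergence of $(\eps r\de(n))_{n \in \N}$ to the arrival times of a Poisson process of intensity $1/\delta^\alpha$.

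For the second piece, observe that $r(\cdot)$ increases by exactly one at each new discovery, so $r(T\de(n)) = r\de(n)$ identically; since $p_\eps \to 0$, one has $r\de(n) \to \infty$ (and hence $T\de(n) \to \infty$) in probability for each fixed $n$. Applying Proposition~\ref{range} along the random sequence $T\de(n) \to \infty$ then gives
$$
\frac{T\de(n)}{r\de(n)} \ = \ \frac{T\de(n)}{r(T\de(n))} \ \xrightarrow[\eps \to 0]{\ov{\P}} \ \frac{1}{c}.
$$
Combining the two pieces through the factorisation $\eps T\de(n) = \eps r\de(n) \cdot \bigl(T\de(n)/r\de(n)\bigr)$ and Slutsky's lemma then yields the joint convergence of $(\eps T\de(n))_{n \in \N}$ to $(\delta^\alpha/c)$ times the arrival times of a unit-rate Poisson process, i.e.\ to the arrival times of a Poisson process of intensity $c/\delta^\alpha$, as claimed. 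The only mild point to watch is joint convergence over all $n$; but this is automatic, since the Poisson limit in the first step holds at the level of finite-dimensional distributions and the limit $1/c$ in the second step is a deterministic constant, so a componentwise Slutsky argument suffices.
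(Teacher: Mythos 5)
Your proof follows essentially the same route as the paper's: first the convergence of $(\eps r\de(n))_{n\in\N}$ to the arrival times of a Poisson process of intensity $\delta^{-\alpha}$ via Proposition~\ref{tauiid}, then a comparison of $T\de(n)$ with $r\de(n)$ through the law of large numbers for the range. One intermediate claim, however, is false: it is not true that $r(\cdot)$ increases by exactly one at each new discovery, and hence not true that $r(T\de(n)) = r\de(n)$ identically. When $\hX$ jumps to a previously unvisited site, all of that site's undiscovered neighbours enter $\mcD((\hX_s)_{s\le t})$ at the same instant, so $r$ can jump by as much as $2d+1$; in particular several deep traps may be discovered simultaneously and $r(T\de(n))$ may strictly exceed $r\de(n)$ (an overshoot at the jump of $r$ crossing level $r\de(n)$).

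The gap is harmless but should be repaired as in the paper: one only has the sandwich
$$
r\de(n) \ \le \ r\bigl(T\de(n)\bigr) \ \le \ r\de(n) + 2d+1,
$$
which suffices because the discrepancy is bounded and disappears after multiplication by $\eps$; thus $(\eps\, r(T\de(n)))_{n\in\N}$ has the same Poisson limit as $(\eps\, r\de(n))_{n\in\N}$, and your factorisation $\eps T\de(n) = \eps\, r(T\de(n)) \cdot T\de(n)/r(T\de(n))$ together with $r(t)/t \to c$ (applied along $T\de(n)\to\infty$ in probability, which requires the small uniform-in-tail argument you implicitly use) gives the stated intensity $c/\delta^\alpha$. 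With that one correction your argument coincides with the paper's.
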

\begin{proof}
We recall from Proposition~\ref{tauiid} that $(\tau_{x_i})_{i \in \N}$ is a family of independent random variables distributed according to $\mu_0$. Hence, $(\1_{i \in \mathcal{R}\de})_{i \in \N}$ is a family of independent Bernoulli random variables of parameter 
$$
\P[\eps^{1/\alpha} \tau_{0} \ge \delta] \sim \frac{\eps}{\delta^\alpha} \qquad (\eps \to 0).
$$
It is thus clear that $r\de(1), r\de(2)-r\de(1),\ldots$ are independent and identically distributed, and that for any $y \ge 0$~:
$$
\P[r\de(1) > \eps^{-1} y] = \left(1-\P[\eps^{1/\alpha} \tau_{0} \ge \delta]\right)^{\lfloor \eps^{-1} y \rfloor} \xrightarrow[\eps \to 0]{} e^{-y/\delta^\alpha}.
$$
As a consequence, $(\eps r\de(n))_{n \in \N}$ converges in distribution to a Poisson process of intensity $\delta^{-\alpha}$. 
Moreover, note that $r(t)$ defined in (\ref{defrt}) inherits right continuity from the one of $\hX$. Besides, because the walk cannot discover more than $2d+1$ sites at once, the heights of the jumps of $r(t)$ are bounded by $2d+1$. We obtain the inequalities~:
$$
r\de(n) \le r\left( T\de(n) \right) \le r\de(n) + 2d+1.
$$
This last inequality implies that $(\eps r(T\de(n)))_{n \in \N}$ also converges in distribution to a Poisson process of intensity $\delta^{-\alpha}$. Hence, for any $n$, the random variable $T\de(n)$ goes to infinity in probability, and Proposition~\ref{range} implies the announced result.
\end{proof}

One can easily describe the limit distribution of $\eps^{1/\alpha} \tau_{x\de(n)}$.
\begin{prop}
\label{taulim}
For any $n \in \N$, the law of $\eps^{1/\alpha} \tau_{x\de(n)}$ under $\ov{\P}$ converges to the law which density is given by
$$
\alpha \delta^{\alpha} \frac{\d x}{x^{\alpha+1}} \ \1_{[\delta, +\infty)}(x).
$$
\end{prop}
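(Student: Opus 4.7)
The plan is to reduce the statement to a direct application of Proposition~\ref{tauiid} together with the regular variation assumption (\ref{regvar}). By Proposition~\ref{tauiid}, under $\ov{\P}$ the sequence $(\tau_{x_i})_{i \in \N}$ is i.i.d.\ with common law $\mu_0$. The random index $r\de(n)$, which by (\ref{defRde}) is the $n$-th hitting time of the set $\{i : \eps^{1/\alpha} \tau_{x_i} \ge \delta\}$, is therefore a stopping time with respect to the natural filtration of $(\tau_{x_i})$, and it is determined solely by the Bernoulli random variables $(\1_{\{\eps^{1/\alpha}\tau_{x_i} \ge \delta\}})_{i \in \N}$.

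By the strong Markov property for i.i.d.\ sequences (or equivalently, by decomposing the event $\{\tau_{x\de(n)} \ge z\}$ according to the value of $r\de(n)$ and exploiting independence), one sees that $\tau_{x\de(n)} = \tau_{x_{r\de(n)}}$ has, under $\ov{\P}$, the same law as $\tau_0$ conditioned on $\{\tau_0 \ge \delta \eps^{-1/\alpha}\}$. Concretely, for any $y \ge \delta$,
$$
\ov{\P}\bigl[\eps^{1/\alpha} \tau_{x\de(n)} \ge y \bigr] \;=\; \frac{\P[\tau_0 \ge \eps^{-1/\alpha} y]}{\P[\tau_0 \ge \delta \eps^{-1/\alpha}]}.
$$

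It now remains to pass to the limit. By assumption (\ref{regvar}), the numerator is equivalent to $\eps\, y^{-\alpha}$ and the denominator to $\eps\, \delta^{-\alpha}$ as $\eps \to 0$, so the ratio converges to $(\delta/y)^\alpha$ for every $y \ge \delta$ (and the tail is identically $1$ for $y < \delta$). Differentiating this limit tail yields exactly the density $\alpha \delta^\alpha y^{-(\alpha+1)} \1_{[\delta,+\infty)}(y)$, concluding the proof.

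There is no real obstacle here: once Proposition~\ref{tauiid} is available, the statement is essentially the elementary fact that the first (or $n$-th) i.i.d.\ sample conditioned to lie in a given set has the conditional distribution, combined with the standard regularly varying tail computation. The only point requiring a sentence of justification is the reduction of $\tau_{x\de(n)}$ to a conditioned copy of $\tau_0$, which follows from the independence between $\tau_{x_{r\de(n)}}$ and the past $(\tau_{x_i})_{i < r\de(n)}$.
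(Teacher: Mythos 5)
Your proof is correct and follows essentially the same route as the paper: both invoke Proposition~\ref{tauiid} to identify the law of $\tau_{x\de(n)}$ as that of $\tau_0$ conditioned on exceeding $\delta\eps^{-1/\alpha}$, and then pass to the limit via the tail assumption~(\ref{regvar}). Your additional remarks about $r\de(n)$ being a stopping time determined by the Bernoulli indicators merely make explicit the justification the paper leaves implicit.
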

\begin{proof}
We recall that, under $\ov{\P}$, the random variables $(\tau_{x_i})_{i \in \N}$ are independent and distributed according to $\mu_0$. The family $(\tau_{x\de(n)})_{n \in \N}$ is the subsequence made of those elements whose value exceeds $\delta \eps^{-1/\alpha}$. Hence, for any $n \in \N$, the law of $\tau_{x\de(n)}$ is the one of $\tau_0$ conditionned on being larger than $\delta \eps^{-1/\alpha}$, and we obtain, for any $x \ge \delta$~:
$$
\P[\eps^{1/\alpha} \tau_{x\de(n)} \ge x] = \frac{\P[\eps^{1/\alpha} \tau_0 \ge x]}{\P[\eps^{1/\alpha} \tau_0 \ge \delta]},
$$
which, according to (\ref{regvar}), converges to $\delta^\alpha/x^\alpha$ as $\eps$ goes to $0$.
\end{proof}

We write $D([0,t],\R)$ for the space of cadlag functions from $[0,t]$ to $\R$. For a definition of the $M_1$ distance on $D([0,t],\R)$, we refer to \cite[(3.3.4)]{whitt} (or equivalently, \cite[(12.3.8)]{whitt}). With a slight abuse of notation, we will not distinguish between a process and its restriction on $[0,t]$. 

\begin{prop}
\label{distM1}
\begin{enumerate}
\item
For any $n \in \N$ and any $t > 0$, the $M_1$ distance on $D([0,t],\R)$ between $l\de(n,\cdot)$ and the step function
$$
l\de(n) \ \1_{\{ \cdot \ge \eps T\de(n) \}}
$$
converges to $0$ in probability under $\ov{\P}$ as $\eps$ tends to $0$. 
\item For any $t > 0$, the $M_1$ distance on $D([0,t],\R)$ between $H\de$ and $\mathcal{H}\de$ (resp. between $L\de$ and $\mathcal{L}\de$) converges to $0$ in probability under $\ov{\P}$ as $\eps$ tends to~$0$.
\end{enumerate}
\end{prop}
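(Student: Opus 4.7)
The plan is to deduce part (1) from the observation that the walker's time interval of visits to $x\de(n)$ has length $O(1)$ (uniformly in $\eps$), which becomes $O(\eps)$ after the time rescaling and thus collapses to a single point in the limit. More precisely, let $\sigma\de(n) := \sup\{s \ge 0 : \hX_s = x\de(n)\}$ denote the last visit time of $\hX$ to $x\de(n)$, with the convention $\sigma\de(n) := T\de(n)$ on the event that $x\de(n)$ is never visited (on which there is nothing to prove, since $l\de(n) = 0$). The map $s \mapsto l\de(n, s)$ is continuous and nondecreasing, vanishes on $[0, \eps T\de(n))$, and reaches $l\de(n)$ on $[\eps \sigma\de(n), +\infty)$. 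A direct computation with parametric representations of the completed graphs (see \cite[(3.3.4)]{whitt}) bounds the $M_1$ distance between $l\de(n, \cdot)$ and the step function $l\de(n) \1_{\{\cdot \ge \eps T\de(n)\}}$ by the length of the rise interval, namely $\eps(\sigma\de(n) - T\de(n))$. It therefore suffices to show that $\sigma\de(n) - T\de(n)$ is tight in $\eps$.

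For this tightness, I would apply the strong Markov property at $T\de(n)$: conditional on the environment, $\sigma\de(n) - T\de(n)$ is the time for a walk starting at a neighbour of $x\de(n)$ to make its last visit to $x\de(n)$. Although the number of such visits grows like $\tau_{x\de(n)}^a$ (the trap being very attractive, as the return probability from a neighbour tends to $1$), each visit has holding time of order $\tau_{x\de(n)}^{-a}$, and each returning excursion is likewise of order $\tau_{x\de(n)}^{-a}$ because the jump from a neighbour back to the trap dominates; a direct computation shows that the conditional expectation of $\sigma\de(n) - T\de(n)$ given the environment outside $x\de(n)$ stays bounded uniformly in $\tau_{x\de(n)}$. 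Combined with the tightness of the surrounding environment $\tau\de(n)$ (Proposition~\ref{tightenv}), this yields tightness of $\sigma\de(n) - T\de(n)$ and hence the conclusion of part (1).

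For part (2), fix $t > 0$. By Proposition~\ref{poisson}, the number $N\de(t) := \#\{n : \eps T\de(n) \le t\}$ of deep traps contributing on $[0, t]$ is tight in $\eps$, so for any $\eta > 0$ there exists $N$ such that $N\de(t) \le N$ with probability at least $1 - \eta$. On this event,
\[
H\de(s) - \mathcal{H}\de(s) = \sum_{n = 1}^{N} \eps^{1/\alpha} \tau_{x\de(n)} \left[ l\de(n, s) - l\de(n) \1_{\{s \ge \eps T\de(n)\}} \right]
\]
for all $s \in [0, t]$, up to a boundary correction from traps with $\eps T\de(n) \le t < \eps \sigma\de(n)$, which vanishes in probability by the tightness in part (1). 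Since the limiting discovery times $(\eps T\de(n))_{n \le N}$ are a.s.\ distinct, the jumps of $\mathcal{H}\de$ occur at distinct times; for sums of nondecreasing nonnegative functions whose limit step functions have disjoint jump sets, the $M_1$ distance of the sums is controlled by the sum of the individual $M_1$ distances, via explicit compatible parametric representations of the completed graphs. Part (1), together with the tightness of $\eps^{1/\alpha} \tau_{x\de(n)}$ (Proposition~\ref{taulim}), then makes each of the $N$ summands small in probability, yielding $d_{M_1}(H\de, \mathcal{H}\de) \to 0$. The argument for $L\de$ versus $\mathcal{L}\de$ is entirely analogous, with the bounded function $h(\theta_{x\de(n)} \tau)$ replacing $\eps^{1/\alpha} \tau_{x\de(n)}$.

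I expect the main obstacle to be the lack of a clean subadditivity of the $M_1$ distance for sums of monotone cadlag functions. The $M_1$ topology is designed precisely to accommodate jumps in the limit of continuous functions, but behaves poorly under addition when the jump sets of the limits overlap. The almost sure distinctness of the discovery times $(T\de(n))$ circumvents this issue in principle, but verifying the subadditivity-type inequality in our situation requires carefully constructing compatible parametric representations of the completed graphs of the summands and of the sum; this is the main technical step of the proof.
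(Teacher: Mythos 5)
Your overall skeleton is the same as the paper's: approximate each $l\de(n,\cdot)$ by a step function in $M_1$, then reduce part (2) to finitely many summands using Propositions~\ref{poisson} and~\ref{taulim} and the boundedness of $h$. The reduction of the $M_1$ distance to the length of the rise interval, and the treatment of part (2), are fine (your worry about subadditivity is overcautious: for nonnegative nondecreasing summands all jumps have the same sign, so addition is $M_1$-continuous in the relevant sense, and in any case the paper is no more detailed here than you are). The genuine gap is in your justification of the tightness of $\sigma\de(n)-T\de(n)$. Your ``direct computation'' only accounts for the direct back-and-forth visits between the trap and its neighbours: each of the $\asymp \tau_{x\de(n)}^a$ visits is followed by an excursion that returns immediately with probability $1-O(\tau_{x\de(n)}^{-a})$, so an expected $O(1)$ number of excursions escape to distance $2$ or more. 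Those escaping excursions are \emph{not} of duration $O(\tau_{x\de(n)}^{-a})$: their return time is that of the walk in the bulk, with tail $\ov{\P}[\text{return after }s]\lesssim s^{1-d/2}$ (and one also needs the Green function at the trap bounded below, which follows from Proposition~\ref{tightenv}). Hence the conditional expectation of $\sigma\de(n)-T\de(n)$ is not controlled by the local excursion count; its finiteness genuinely requires the on-diagonal decay $\sup_{x,y}\PPt_x[\hX_s=y]\le C s^{-d/2}$ of Proposition~\ref{diagonaldecay} (and finiteness of the mean in fact needs $d\ge5$, while mere tightness needs $d\ge3$). This is exactly the input the paper uses, in a slightly different form: rather than the last visit time, it bounds $\ov{\E}\bigl[l\de(n)-l\de(n,\eps T\de(n)+\eta)\bigr]$, i.e.\ the residual local time accrued after a macroscopic delay $\eta$, by applying the Markov property at $T\de(n)$ and integrating the heat kernel bound over $[\eps^{-1}\eta,\infty)$; this avoids discussing $\sigma\de(n)$ altogether.

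A second, smaller point: your bound $d_{M_1}\le\eps(\sigma\de(n)-T\de(n))$ on $D([0,t],\R)$ fails when the rise interval straddles the right endpoint $t$ (the parametrizations must terminate at $(t,l\de(n))$ and $(t,l\de(n,t))$ respectively, forcing a distance of at least $l\de(n)-l\de(n,t)$). You acknowledge a ``boundary correction'' in part (2), but it must also be excluded in part (1); the paper does this by discarding the event $\eps T\de(n)\in[t-\eta,t]$, whose probability is small by Proposition~\ref{poisson}. With the heat-kernel input restored and this boundary event excluded, your argument closes.
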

\begin{proof}
Let $\eta > 0$ be some small parameter. We begin by showing that the difference between $l\de(n,\cdot)$ evaluated at time $\eps T\de(n) + \eta$ and its limit is small, in the sense that
\begin{equation}
\label{M1droite}
\ov{\E} \left[ l\de(n) - l\de\left(n,\eps T\de(n) + \eta\right) \right]  \xrightarrow[\eps \to 0]{} 0.
\end{equation}
One can rewrite the left hand side above as
\begin{equation}
\label{M1droiteetap1}
\ov{\E}\left[ \int_{T\de(n) + \eps^{-1} \eta}^{+\infty} \1_{\{\hX_s = x\de(n)\}} \ \d s \right].
\end{equation}
Recall that $x\de(n)$ is a site discovered at time $T\de(n)$, hence it is a neighbour of $X_{T\de(n)}$, and we can bound the integrand above by~:
$$
\1_{\{ |\hX_s-\hX_{T\de(n)}|\le 1 \}}.
$$
Using this together with the Markov property at time $T\de(n)$ leads one to bound the term in (\ref{M1droiteetap1}) by~:
\begin{equation*}
\ov{\E} \left[ \EEt_{\hX_{T\de(n)}}\left[ \int_{\eps^{-1} \eta}^{+\infty} \1_{\{|\hX_s - \hX_0| \le 1\}} \ \d s \right] \right] .
\end{equation*}
But as we recall in Proposition~\ref{diagonaldecay} of the Appendix, there exists $C$ such that for any $s > 0$~:
$$
\sup_{x,y} \PPt_x[X_s = y]  \le \frac{C}{s^{d/2}},
$$
from which (\ref{M1droite}) follows. In particular, this implies that the probability of the event~:
\begin{equation}
\label{M1event}
l\de(n) - l\de\left(n,\eps T\de(n) + \eta\right) \le \eta
\end{equation}
converges to $1$ as $\eps$ goes to $0$. On this event, the increasing process $l\de(n,\cdot)$ is constant equal to $0$ up to time $\eps T\de(n)$, and reaches a value close to its limit by $\eta$ at time $(\eps T\de(n) + \eta)$. From this observation, it is not hard to construct parametrizations of the completed graphs (as defined in \cite[(3.3.3)]{whitt}) of $l\de(n,\cdot)$ and of the step function that show the $M_1$ distance on $D([0,t],\R)$ to be smaller than $2 \eta$, provided $\eps T\de(n)$ does not lie in $[t-\eta,t]$. By Proposition~\ref{poisson}, the probability that such an event happens is as close to $0$ as desired, thus ending the proof of the first part of the proposition.

Let us now turn to the second part of the proposition. We recall that $\mathcal{H}\de$ was defined in~(\ref{defmclH}). Using the previous result, together with the fact that the random variable $\eps^{1/\alpha} \tau_{x\de(n)}$ is tight by Proposition~\ref{taulim}, we obtain that the $M_1$ distance between 
$$
l\de(n,\cdot) \ \eps^{1/\alpha} \tau_{x\de(n)}
$$
on one hand, and 
$$
l\de(n) \ \eps^{1/\alpha} \tau_{x\de(n)} \ \1_{\{\cdot \ge \eps T\de(n)\}}
$$
on the other, goes to $0$ in probability as $\eps$ tends to $0$. Moreover, because of Proposition~\ref{poisson}, the number of $n$'s such that $\eps T\de(n)$ belongs to $[0,t]$ is bounded in probability. Hence, when considering $H\de$ in (\ref{Hderewrit}), one can restrict the sum to a finite number of terms, and then apply the above observation to each of the terms, thus proving the proposition. The same proof applies as well to $L\de$ and $\mathcal{L}\de$, using the representation in (\ref{Lderewrit}) and the fact that the function~$h$ is bounded.
\end{proof}

%
%
%
%
%
%

\section{The environment around a trap}
\label{s:envatrap}
\setcounter{equation}{0}
Consider the environment around the $n^{\text{th}}$ deep trap, $\theta_{x\de(n)} \ \tau$. We have already seen in Proposition~\ref{taulim} the convergence in law, after proper scaling, of $\left( \theta_{x\de(n)} \ \tau \right)_0 = \tau_{x\de(n)}$. We would like to gain information about the other coordinates of $\theta_{x\de(n)} \ \tau$. For any $z \neq 0$, let $\tau\de(n,z)$ be defined by
$$
\tau\de(n,z) = \left( \theta_{x\de(n)} \ \tau \right)_z = \tau_{x\de(n)+z}.
$$
For convenience, we write $\tau\de(n)$ for the family $\left(\tau\de(n,z)\right)_{z \neq 0}$, and may call $\tau\de(n)$ the \emph{environment around the} $n^{th}$ \emph{deep trap}. We insist that this environment has not any value asigned at the origin.

We will show that $\tau\de(n)$ converges in law (for the product topology) as $\eps$ goes to $0$. The next proposition is a first step in this direction.

\begin{prop}
\label{tightenv}
For any integer $n$, the family of random variables $(\tau\de(n))_{\eps > 0}$ is tight under $\ov{\P}$.
\end{prop}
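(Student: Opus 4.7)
The plan is to reduce tightness of $(\tau\de(n))_{\eps > 0}$ in the product topology on $\R^{\Z^d \setminus \{0\}}$ to a uniform-in-$\eps$ tail estimate on each marginal $\tau_{x\de(n)+z}$, $z \neq 0$, and then to obtain this estimate by splitting according to whether the site $x\de(n)+z$ belongs to $\mcD_n := \mcD((\hX_s)_{s \le T\de(n)})$, the set of sites discovered by the moment the $n$-th deep trap is found. Passage from a coordinatewise bound of the form $\ov{\P}[\tau_{x\de(n)+z} > M] \le h(M) + g(\eps)$ with $h, g \to 0$ to tightness in the product topology is then routine via a union bound over $z$, letting the cutoff $M_z$ grow with $|z|$ so the $z$-sum converges.

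When $x\de(n)+z \notin \mcD_n$, the exploration mechanism behind Proposition~\ref{tauiid} shows that $\tau_{x\de(n)+z}$ is a ``fresh'' $\mu_0$-distributed value independent of the data determining $x\de(n)$; the contribution of this case to $\ov{\P}[\tau_{x\de(n)+z} > M]$ is therefore at most $\mu_0((M,\infty))$, uniformly in $\eps$. When $x\de(n)+z \in \mcD_n$, we may write $x\de(n)+z = x_j$ for some $j < r\de(n)$ and split further on whether $\tau_{x_j}$ is shallow or deep. On the shallow branch, the marginal i.i.d.\ statement of Proposition~\ref{tauiid} together with $\P[\tau_0 \le \delta\eps^{-1/\alpha}] \to 1$ yields again a contribution of order $\mu_0((M,\infty))$. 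On the deep branch, $x_j$ must coincide with some $x\de(m)$ for $m < n$, so two of the first $n$ deep traps sit at displacement $z$; combining Propositions~\ref{tauiid}, \ref{range} and~\ref{poisson}, the expected number of pairs of sites in $\mcD_n$ at displacement $z$ that are simultaneously deep is at most of order $|\mcD_n| \cdot p_\eps^2 \sim \eps^{-1} \cdot (\eps/\delta^\alpha)^2 = O(\eps)$, which vanishes as $\eps \to 0$.

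Given $\eta > 0$, one would first pick $\eps_0$ so that the deep-pair contribution is $\le \eta/3$ for $\eps \le \eps_0$, and then $M$ large enough to bound the Case A and shallow contributions by $\eta/3$ each, uniformly in $\eps$; the remaining range $\eps \in [\eps_0, \infty)$ is handled by noting that the depth threshold $\delta\eps^{-1/\alpha}$ stays bounded there, so each $\tau_{x\de(m)}$ has a uniform polynomial tail. The main obstacle I anticipate is the deep-pair estimate: Proposition~\ref{tauiid} only asserts the marginal i.i.d.\ structure of $(\tau_{x_i})$ under $\ov{\P}$, not its independence from the walk $\hX$, so the union bound over pairs of discovered sites at displacement $z$ must be organized to legitimately combine the walk-level pair count with the environment-level $p_\eps^2$ probability of double-depth without illegitimately conditioning on $\hX$.
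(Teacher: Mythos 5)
Your reduction to coordinatewise tail bounds, and your Case A (the site $x\de(n)+z$ undiscovered at time $T\de(n)$, hence carrying a fresh $\mu_0$-distributed value independent of everything that determined $x\de(n)$), are sound and close in spirit to the paper's argument. The gap is in Case B, and it is not confined to the deep branch you flag at the end: it already breaks the shallow branch. When $x\de(n)+z=x_j$ for a \emph{random} index $j$, Proposition~\ref{tauiid} gives no control on $\tau_{x_j}$, because the event $\{x_j=x\de(n)+z\}$ depends on the trajectory after the discovery of $x_j$, which (since $a\neq 0$) is correlated with $\tau_{x_j}$: the walk is attracted to high-$\tau$ sites, so a deep trap is more likely to be discovered from a large-$\tau$ neighbour, biasing $\tau_{x\de(n)+z}$ upward on exactly this event. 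Hence ``the marginal i.i.d.\ statement \dots yields a contribution of order $\mu_0((M,\infty))$'' is a non sequitur: the only legitimate union bound over the random index runs over all $O(\eps^{-1})$ discovered sites and gives the useless $\eps^{-1}\mu_0((M,\infty))$, unless one also extracts the factor $\P[\eps^{1/\alpha}\tau_0\ge\delta]\sim\eps\delta^{-\alpha}$ coming from the requirement that the companion site be deep. Extracting that factor jointly with the tail of the neighbour is precisely the independence problem you correctly identify as ``the main obstacle'' for the deep branch --- but it must be solved in \emph{both} branches, and your proposal does not solve it in either.

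The paper's resolution is a small combinatorial device that you would need (or an equivalent of it). Call $x$ \emph{atypical} if $x$ is deep and $\tau_{x+z}\ge M$, and \emph{uncommon} if $x$ or $x-z$ is atypical. Lemma~\ref{atyp} shows that if $x\de(n)$ is atypical, then at the first index $k_0$ at which an uncommon site is discovered, the pair of sites witnessing uncommonness ($\{x_{k_0},x_{k_0}+z\}$ or $\{x_{k_0}-z,x_{k_0}\}$) is disjoint from $\{x_1,\dots,x_{k_0-1}\}$; consequently the event ``$x_{k_0}$ is uncommon regardless of the previously discovered sites'' is measurable with respect to $\tau$ restricted to sites outside $\mcD(\gamma^\leftarrow)$ and is therefore independent of $\{Y=\gamma\}$ under the product measure. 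A union bound over $k\le\eps^{-1}C_r$ then yields the bound $2\eps^{-1}C_r\,\P[\eps^{1/\alpha}\tau_0\ge\delta]\,\P[\tau_z\ge M]$, in which the $\eps$'s cancel, leaving a quantity that is small for large $M$ uniformly in small $\eps$. Note that once this is in place, no second splitting of the neighbour into shallow/deep, and no $p_\eps^2$ pair count, is needed.
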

\begin{proof}
Let $z \in \Z^d \setminus \{ 0 \}$. It suffices to show that, for any $\eta > 0$, there exists $M > 0$ such that, for $\eps$ small enough,
\begin{equation}
\label{tightcrit}
\ov{\P}\left[ \tau_{x\de(n) + z} \ge M \right] \le \eta.
\end{equation}
We say that $x \in \Z^d$ is \emph{atypical} if it is a deep trap and the depth of the site $(x+z)$ exceeds $M$~:
$$
\eps^{1/\alpha} \tau_x \ge \delta \quad \text{and} \quad \tau_{x+z} \ge M.
$$
The event appearing in the left hand-side of (\ref{tightcrit}) can be rephrased as saying that $x\de(n)$ is an atypical trap.
We say that $x \in \Z^d$ is \emph{uncommon} if it is atypical, or if $(x-z)$ is atypical. Finally, for a subset $\Gamma \subset \Z^d$, we say that $x$ is \emph{uncommon regardless of} $\Gamma$ if one can infer that $x$ is uncommon without considering sites inside $\Gamma$, i.e. if one of the two following conditions occur~:
$$
x \text{ is atypical and } \{x,x+z\} \cap \Gamma = \emptyset, \quad \text{or} \quad (x-z) \text{ is atypical and } \{x-z,x\} \cap \Gamma = \emptyset.
$$
Let us assume momentarily the validity of the following lemma, and see how it enables us to show the proposition.
\begin{lem}
\label{atyp}
If $x\de(n)$ is atypical, then there exists $k \le r\de(n)$ such that $x_k$ is uncommon regardless of $\{x_1,\ldots,x_{k-1}\}$.
\end{lem}
We saw in the proof of Proposition~\ref{poisson} that the random variables $\eps r\de(n)$ converge in law as $\eps$ tends to $0$. Therefore, one can find a constant $C_r$ such that the probability of the event 
\begin{equation}
\label{rdeCr}
\eps r\de(n) \le C_r
\end{equation}
is as close to $1$ as desired when $\eps$ is small. On this event, using the result of the lemma, the fact that $x\de(n)$ is atypical implies that there exists $k \le \eps^{-1} C_r$ such that $x_k$ is uncommon regardless of $\{x_1,\ldots,x_{k-1}\}$. The probability of this event is bounded by
\begin{equation}
\label{sumatypicals}
\sum_{k = 1}^{\eps^{-1} C_r} \ov{\P}[x_k \text{ is uncommon regardless of } \{x_1,\ldots,x_{k-1}\}].
\end{equation}
We now proceed to evaluate the generic term of this sum. 
We will condition on the trajectories up to the discovery of the $k^\text{th}$ site. We refer to the proof of Proposition~\ref{tauiid} for the definitions of $\gamma^{\leftarrow}$ (where $\gamma$ is a path), the set of paths $E_k$ and the meaning of the event that we write ``$Y = \gamma$''.
\begin{equation*}
\begin{split}
& \ov{\P}[x_k \text{ uncommon regardless of } \{x_1,\ldots,x_{k-1}\}] \\
& \qquad  =  \sum_{\gamma \in \E_k} \ov{\P}[Y = \gamma, \ x_k(\gamma) \text{ uncommon regardless of } \{x_1(\gamma),\ldots,x_{k-1}(\gamma)\}] \\
& \qquad =  \sum_{\gamma \in \E_k} \E\left[ \PPt_0[Y = \gamma] , \ x_k(\gamma) \text{ uncommon regardless of } \{x_1(\gamma),\ldots,x_{k-1}(\gamma)\} \right].
\end{split}
\end{equation*}
Moreover, the probability $\PPt_0[Y = \gamma]$ depends only on $\mcD(\gamma^\leftarrow)$, while the event
$$
x_k(\gamma) \text{ is uncommon regardless of } \{x_1(\gamma),\ldots,x_{k-1}(\gamma)\}
$$
has been constructed in order to depend only on sites outside $\mcD(\gamma^\leftarrow)$. Due to the fact that $\P$ is a product measure, it comes that
\begin{equation*}
\begin{split}
& \ov{\P}[x_k \text{ is uncommon regardless of } \{x_1,\ldots,x_{k-1}\}] \\
& \qquad = \sum_{\gamma \in \E_k} \ov{\P}[Y = \gamma] \ \P[x_k(\gamma) \text{ is uncommon regardless of } \{x_1(\gamma),\ldots,x_{k-1}(\gamma)\}] \\
& \qquad \le \sum_{\gamma \in \E_k} \ov{\P}[Y = \gamma] \ \P[x_k(\gamma) \text{ is uncommon}],
\end{split}
\end{equation*}
Translation invariance of the measure $\P$ implies that in fact, 
$$
\P[x \text{ is uncommon}]
$$
does not depend on $x$. We have thus shown that the sum in (\ref{sumatypicals}) is bounded by
\begin{equation}
\label{tightenvconcl}
\eps^{-1} C_r \P[0 \text{ is uncommon}] \le 2 \eps^{-1} C_r \P[0 \text{ is atypical}],
\end{equation}
a term which should be uniformly small as $\eps$ goes to $0$, when $M$ is chosen large enough. It is easily seen to be so noting that
$$
\P[0 \text{ is atypical}] = \P[\eps^{1/\alpha} \tau_0 \ge \delta] \ \P[\tau_z \ge M],
$$
and that $\eps^{-1} \P[\eps^{1/\alpha} \tau_0 \ge \delta]$ is bounded as $\eps$ goes to $0$, while $\P[\tau_z \ge M]$ can be made arbitrarily small by choosing $M$ large enough.
\end{proof}
\begin{proof}[Proof of Lemma~\ref{atyp}]
Let $k_0$ be defined by
$$
k_0 = \min \{ k : x_k \text{ is uncommon} \}.
$$
If $x\de(n)$ is atypical, then in particular it is uncommon, hence on this event, $k_0$ is finite and smaller than $r\de(n)$. Two situations may occur. If $x_{k_0}$ is atypical, then $x_{k_0}+z$ is uncommon, hence does not belong to $ \{x_1,\ldots, x_{k_0-1}\}$, so that
$$
\{x_{k_0},x_{k_0}+z\} \cap \{x_1,\ldots, x_{k_0-1}\} = \emptyset.
$$
In this case, $x_{k_0}$ is indeed uncommon regardless of $\{x_1,\ldots, x_{k_0-1}\}$. On the other hand, if it is $x_{k_0}-z$ that is atypical, then in particular it is uncommon, hence it does not belong to $\{x_1,\ldots, x_{k_0-1}\}$, and the intersection
$$
\{x_{k_0}-z,x_{k_0}\} \cap \{x_1,\ldots, x_{k_0-1}\}
$$
is empty, a fact from which the conclusion follows as well.
\end{proof}

We will show in the next proposition that the asymptotic behaviour of $l\de(n)$ can be inferred from the one of $\tau\de(n)$. Let $G(\tau)$ be the Green function at the origin~:
$$
G(\tau) = \EEt_0 \left[ \int_0^{+\infty} \1_{\{ \hX_s = 0 \}} \ \d s\right].
$$
Let $e\de(n)$ be such that
\begin{equation}
\label{defeden}
l\de(n) = e\de(n) G\left(\theta_{x\de(n)} \ \tau\right).
\end{equation}
From the fact that, with high probability, the site $x\de(n)$ is visited by the random walk, one can easily derive that $e\de(n)$ converges in law to an exponential random variable of parameter one. 

We recall from (\ref{defmclT}) that we write $\mathcal{T}_x$ for the hitting time of $x$.
\begin{prop}
\label{visittrap}
\begin{enumerate}
\item
The probability that the site $x\de(n)$ is visited by the random walk goes to $1$ as $\eps$ goes to $0$~:
$$
\ov{\P}\left[\mathcal{T}_{x\de(n)} < \infty\right] \xrightarrow[\eps \to 0]{} 1.
$$
\item For any $u \ge 0$ and any $x \in \Z^d$, one has
\begin{multline}
\label{edencalcul}
\PPt_0[e\de(n) \ge u, \ x\de(n) = x, \ \mathcal{T}_{x\de(n)} <  \infty]  \\ = e^{-u} \ \PPt_0[x\de(n) = x, \ \mathcal{T}_{x\de(n)} <  \infty].
\end{multline}
\item
As $\eps$ goes to $0$, the random variable $e\de(n)$ converges in law under $\ov{\P}$ to an exponential random variable of parameter $1$.
\end{enumerate}
\end{prop}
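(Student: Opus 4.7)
The plan is to prove (2) first via the strong Markov property of $\hX$, then (1) by showing that a very deep trap is essentially irresistible to the walk once a neighbour is reached, and finally to combine these into (3). The main obstacle is (1), because it requires control of the $\tau$-values at all sites within graph distance two of $x\de(n)$, not just at sites in $\tau\de(n)$.

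For part (2), I apply the strong Markov property of $\hX$ at the stopping time $\mathcal{T}_{x\de(n)}$ (for fixed $\tau$, this is a stopping time of the trajectory filtration). On the event $\{x\de(n) = x,\ \mathcal{T}_x < \infty\}$ and for $\P$-a.e.\ $\tau$, the post-$\mathcal{T}_x$ trajectory is an independent copy of $\hX$ under $\PPt_x$, so $l\de(n)$ has the law of the total time $T_x$ spent at $x$ by a walk started at $x$. A standard decomposition into successive visits shows that $T_x$ is a geometric-with-parameter-$(1-q)$ sum of i.i.d.\ exponentials of rate $p_\tau(x) = \sum_{z \sim x}(\tau_x \tau_z)^a$, where $q$ is the return probability of $\hX$ to $x$. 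Hence $T_x$ is itself exponential with rate $p_\tau(x)(1-q) = 1/\EEt_x[T_x] = 1/G(\theta_x \tau)$, so that $\PPt_x[T_x \ge u\, G(\theta_x \tau)] = e^{-u}$. Recalling the definition $e\de(n) = l\de(n)/G(\theta_{x\de(n)} \tau)$, this yields exactly~(\ref{edencalcul}).

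For part (1), observe that on $\{\mathcal{T}_{x\de(n)} = \infty\}$ the walk must satisfy $\hX_{T\de(n)} \neq x\de(n)$, so $\hX_{T\de(n)}$ is a neighbour $y$ of $x\de(n)$; by the strong Markov property at $T\de(n)$ the future walk, started at $y$, never visits $x\de(n)$. A first-jump decomposition under $\PPt_y$ gives
$$
\PPt_y[\mathcal{T}_{x\de(n)} = \infty] \le 1 - \frac{(\tau_{x\de(n)})^a}{\sum_{z \sim y} (\tau_z)^a} \le \frac{\sum_{z \sim y,\ z \neq x\de(n)} (\tau_z)^a}{(\tau_{x\de(n)})^a}.
$$
The denominator is at least $\delta^a \eps^{-a/\alpha}$ and diverges because $a \neq 0$, whereas the numerator is a sum of at most $2d-1$ values of $\tau$ at fixed offsets from $x\de(n)$; since $y$ ranges over a finite set of positions relative to $x\de(n)$, tightness of each such $\tau_{x\de(n)+v}$ follows from Proposition~\ref{tightenv} applied at the finitely many offsets involved. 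As the bound is dominated by $1$ and tends to $0$ in $\ov{\P}$-probability, dominated convergence gives $\ov{\P}[\mathcal{T}_{x\de(n)} < \infty] \to 1$. For part (3), summing~(\ref{edencalcul}) over $x$ yields $\ov{\P}[e\de(n) \ge u,\ \mathcal{T}_{x\de(n)} < \infty] = e^{-u}\, \ov{\P}[\mathcal{T}_{x\de(n)} < \infty]$; since $e\de(n)$ vanishes on $\{\mathcal{T}_{x\de(n)} = \infty\}$, combining this with part (1) gives $\ov{\P}[e\de(n) \ge u] \to e^{-u}$ for every $u > 0$, which is the desired convergence in law to $\mathrm{Exp}(1)$.
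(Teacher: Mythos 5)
Your proposal is correct and follows essentially the same route as the paper: part (2) via the strong Markov property at $\mathcal{T}_x$ and the exponentiality of the total occupation time (which you justify by the standard geometric-sum decomposition that the paper simply asserts), part (1) via the first-jump probability from the discovery site combined with the tightness of the environment at the finitely many offsets within graph distance two given by Proposition~\ref{tightenv}, and part (3) by summing~(\ref{edencalcul}) over $x$ and invoking part (1).
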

Before turning to the proof, we introduce some notation. Let $q^\tau(x,y)$ be the probability for the walk starting from $x$ to make its first jump on the site $y$. When $x,y \in \Z^d$ are neighbours, we write
\begin{equation}
\label{defsigma}
\sigma^\tau(x,y) = \sum_{\substack{z \sim x \\ z \neq y}} (\tau_z)^a.
\end{equation}
Moreover, for any $x,y \in \Z^d$ we write $x \approx y$ if $x$ is a neighbour or a second neighbour of $y$.
\begin{proof}[Proof of Proposition~\ref{visittrap}]
From Proposition~\ref{tightenv}, we know that the probability of the event
\begin{equation}
\label{eventboundedapprox}
\forall y \approx x\de(n) \quad \tau_y \le M
\end{equation}
can be made as close to $1$ as desired, by choosing $M$ large enough. Let us assume that the position of the walk at the time of discovery of $x\de(n)$ is $x$. In particular, $x$ is a neighbour of $x\de(n)$, and the probability (for a fixed environment) that from $x$, the walk jumps to $x\de(n)$ is given by
\begin{equation}
\label{probasaut}
q^\tau(x,x\de(n)) = \frac{\left(\tau_{x\de(n)}\right)^a}{\sum_{z \sim x} (\tau_z)^a} = \left( 1 +  \frac{\sigma_\tau(x,x\de(n))}{\left(\tau_{x\de(n)}\right)^a} \right)^{-1}.
\end{equation}
On the event (\ref{eventboundedapprox}), the random variable $\sigma_\tau(x,x\de(n))$ is bounded (uniformly over $x$), while $\tau_{x\de(n)}$ is larger than $\eps^{-1/\alpha} \delta$. Hence, the quantity in (\ref{probasaut}) goes to $1$ in probability, which proves the first part of the proposition.

Let us now consider the second part. We have~:
\begin{multline*}
 \PPt_0[e\de(n) \ge u, \ x\de(n) = x, \ \mathcal{T}_{x\de(n)} <  \infty] \\
 = \PPt_0\left[\int_{\mathcal{T}_x}^{\infty} \1_{\{ \hX_s = x \}} \ \d s \ge u G(\theta_x \ \tau), \ x\de(n) = x, \ \mathcal{T}_x < \infty \right].
\end{multline*}
The Markov property at time $\mathcal{T}_x$ enables us to rewrite it as
\begin{equation}
\label{expcalcul}
\PPt_x\left[\int_0^{\infty} \1_{\{ \hX_s = x \}} \ \d s \ge u G(\theta_x \ \tau) \right] \ \PPt_0[x\de(n) = x, \ \mathcal{T}_x < \infty].
\end{equation}
Starting from $x$, the total time spent on site $x$ is an exponential random variable of parameter $G(\theta_x \ \tau)$. Hence, the first term in (\ref{expcalcul}) is equal to $e^{-u}$, and we obtain the announced claim. 

The third part of the proposition is a direct consequence of the first two. Indeed, summing over all $x \in \Z^d$ and integrating over the environment in equation (\ref{edencalcul}), one obtains that, conditionally on $\mathcal{T}_{x\de(n)} < \infty$, the random variable $e\de(n)$ is distributed under $\ov{\P}$ as an exponential random variable of parameter $1$. The result is then obtained using the fact that the probability of the event $\mathcal{T}_{x\de(n)} < \infty$ goes to $1$ as $\eps$ goes to $0$. 
\end{proof}
In Proposition~\ref{compG} of the Appendix, we show that the limit
\begin{equation}
\label{limitG}
\ov{G}\left( (\tau_z)_{z \neq 0} \right) = \lim_{\tau_0 \to +\infty} G(\tau)
\end{equation}
exists. The next proposition claims that $G\left(\theta_{x\de(n)} \ \tau\right)$ is well approximated by $\ov{G}\left(\tau\de(n)\right)$.
\begin{prop}
\label{lde}
The difference
$$
\left|G\left(\theta_{x\de(n)} \ \tau\right) - \ov{G}\left(\tau\de(n)\right) \right|
$$
converges to $0$ in $\ov{\P}$-probability as $\eps$ tends to $0$.
\end{prop}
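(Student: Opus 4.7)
The plan is to combine three ingredients: the pointwise limit $G(\tau) \to \ov{G}((\tau_z)_{z \neq 0})$ as $\tau_0 \to +\infty$ (Proposition~\ref{compG}); the tightness of the family $(\tau\de(n))_{\eps > 0}$ (Proposition~\ref{tightenv}); and the fact that the depth of the $n^{\text{th}}$ deep trap satisfies $\tau_{x\de(n)} \ge \delta \eps^{-1/\alpha} \to +\infty$ as $\eps \to 0$. Informally, the environment $\theta_{x\de(n)} \tau$ has a tight ``transverse'' part $\tau\de(n)$ and a ``longitudinal'' coordinate $\tau_{x\de(n)}$ diverging to $+\infty$, so $G$ and $\ov{G}$ should asymptotically agree.

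To make this precise I would use the subsequence characterisation of convergence in probability. Given any subsequence $\eps_k \to 0$, tightness allows us to extract a further subsequence along which $\tau\dek(n)$ converges in distribution to some random $\tau^*$ in $[1,+\infty)^{\Z^d \setminus \{0\}}$; passing to a common probability space via Skorokhod's representation, the convergence is almost sure. Continuity of $\ov{G}$ then gives $\ov{G}(\tau\dek(n)) \to \ov{G}(\tau^*)$. On the other side, since $\tau_{x\dek(n)} \to +\infty$ while the remaining coordinates of $\theta_{x\dek(n)} \tau$ converge to $\tau^*$, a joint-continuity version of Proposition~\ref{compG} yields $G(\theta_{x\dek(n)} \tau) \to \ov{G}(\tau^*)$. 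Subtracting, the difference vanishes almost surely along this subsequence, which is exactly convergence in probability.

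The main obstacle is this joint-continuity step: it requires an essentially uniform version of Proposition~\ref{compG}, namely
\[
\sup_{(\tau_z) \in K}\bigl|G(\tau) - \ov{G}((\tau_z))\bigr| \xrightarrow[\tau_0 \to +\infty]{} 0 \quad \text{for every compact } K \subset [1,+\infty)^{\Z^d \setminus \{0\}}.
\]
I would establish this by leveraging transience: the diagonal heat kernel bound from Proposition~\ref{diagonaldecay} makes the contribution to $G(\tau)$ from sites outside a large box negligible uniformly in $\tau$, so one can reduce to the dependence on finitely many coordinates of the environment, on which a standard compactness argument together with the pointwise limit of Proposition~\ref{compG} concludes the matter.
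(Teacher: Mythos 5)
Your overall reading of the situation (deep trap depth diverges, surrounding environment is tight, $G$ should degenerate to $\ov{G}$) is right, but the route you take has a genuine gap at its load-bearing step, and it overlooks the fact that Proposition~\ref{compG} is not merely the qualitative limit statement~(\ref{limitG}): it is the quantitative sandwich
$$
\ov{G}(\tau) \le G(\tau) \le \left( \min_{y \sim 0} q^\tau(y,0) \right)^{-2} \ov{G}(\tau),
\qquad G(\tau) \le (C_\infty^\circ)^{-1}.
$$
This gives immediately $0 \le G(\tau) - \ov{G}(\tau) \le (C_\infty^\circ)^{-1}\bigl(1 - \min_{y \sim 0} q^\tau(y,0)^2\bigr)$, so the whole proposition reduces to showing that $\min_{y \sim x\de(n)} q^\tau\bigl(y,x\de(n)\bigr) \to 1$ in $\ov{\P}$-probability — which is exactly the computation~(\ref{probasaut}) already carried out in the proof of Proposition~\ref{visittrap}, using $\tau_{x\de(n)} \ge \delta\eps^{-1/\alpha}$ and the tightness of the neighbouring depths. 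This is the paper's two-line proof; no subsequence extraction, no Skorokhod representation, and no continuity of $\ov{G}$ are needed.

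The gap in your version is the asserted uniform statement $\sup_{(\tau_z) \in K}\bigl|G(\tau) - \ov{G}((\tau_z))\bigr| \to 0$, and in particular its justification via Proposition~\ref{diagonaldecay}. The on-diagonal bound $\PPt_x[\hX_t = y] \le C t^{-d/2}$ controls the contribution to $G(\tau)$ from \emph{late times}, not from \emph{distant sites}: with unbounded conductances the walk can exit an arbitrarily large box in arbitrarily short time (take all $\tau_z$ large), so the contribution to $G(\tau)$ from trajectories leaving $B_n$ is \emph{not} negligible uniformly in $\tau$, and converting the temporal cutoff into a spatial one requires exactly the kind of exit-time lower bounds that the paper has to work for elsewhere (Lemma~\ref{hitprobatime}). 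The same issue resurfaces in your unproven claim that $\ov{G}$ is continuous for the product topology: from (\ref{defovCn}) one only gets that $\ov{C}_n$ decreases to $\ov{C}_\infty$, hence $\ov{G}$ is lower semicontinuous for free, and upper semicontinuity is again a uniform spatial-truncation statement. So as written the argument does not close; if you want to salvage your scheme, the efficient repair is precisely to invoke the sandwich inequality above, at which point the elaborate compactness machinery becomes unnecessary.
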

\begin{proof}
We recall that we denote by $q^\tau(x,y)$ the probability for the walk starting from $x$ to jump to the site $y$. Proposition~\ref{compG} of the Appendix states that, for any environment $\tau$~:
$$
0 \le G(\tau) - \ov{G}(\tau) \le G(\tau) \left(1 - \min_{y \sim 0} q^\tau(y,0)^2 \right),
$$
and moreover, that $G(\tau)$ is uniformly bounded. Hence, in order to prove the claim, it suffices to show that
$$
\min_{y \sim x\de(n)} q^\tau(y,x\de(n))^2
$$
converges to $1$ in $\ov{\P}$-probability as $\eps$ tends to $0$. This fact has in fact already been shown to hold during the proof of Proposition~\ref{visittrap}.
\end{proof}

We now precise the particular form of the limits of $L\de$ and $H\de$. From Propositions~\ref{poisson}, \ref{taulim}, \ref{tightenv} and~\ref{visittrap}, we know that the joint distribution of 
\begin{equation}
\label{bigjoint}
\left(T\de(n), \eps^{1/\alpha} \tau_{x\de(n)}, \tau\de(n) ,e\de(n)\right)_{n \in \N}
\end{equation}
is tight under $\ov{\P}$. Let $(\eps_k)_{k \in \N}$ be a sequence on which the joint law of (\ref{bigjoint}) converges. Possibly enlarging the probability space, we assume that there exist random variables which are distributed acording to this limit law under $\ov{\P}$, and which we denote by
\begin{equation}
\label{bigjointlimit}
\left(T_\delta(n), \tau^\circ_\delta(n), \tau_\delta(n) , e_\delta(n) \right)_{n \in \N}.
\end{equation}
Hence, we assume that the following convergence holds~:
\begin{multline}
\label{convbigjoint}
\left(T\dek(n), \eps_k^{1/\alpha} \tau_{x\dek(n)}, \tau\dek(n) ,e\dek(n)\right)_{n \in \N} \\ \xrightarrow[k \to +\infty]{\text{law}} \left(T_\delta(n), \tau^\circ_\delta(n), \tau_\delta(n) , e_\delta(n) \right)_{n \in \N}.
\end{multline}
\begin{prop}
\label{limitHform}
Let $(\eps_k)$ be a sequence such that (\ref{convbigjoint}) holds. The laws of the processes $H\dek$ and $L\dek$ converge, respectively, to the ones of $H_\delta$ and $L_\delta$, defined by~:
\begin{equation}
\label{convH}
H_\delta(\cdot) = \sum_{n = 1}^{+\infty} e_\delta(n) \ov{G}(\tau_\delta(n)) \tau^\circ_\delta(n)  \1_{\{\cdot \ge T_\delta(n)\}},
\end{equation}
\begin{equation}
\label{convL}
L_\delta(\cdot) = \sum_{n = 1}^{+\infty} e_\delta(n) (\ov{G} h)(\tau_\delta(n)) \1_{\{\cdot \ge T_\delta(n)\}},
\end{equation}
this convergence holding both for the $M_1$ topology and in the sense of finite-dimen\-sional distributions.
\end{prop}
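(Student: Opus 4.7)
The plan is to reduce $H\dek$ and $L\dek$ to truncated step-function processes whose heights and jump times converge jointly, and then appeal to the continuous mapping theorem.

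First, by Proposition~\ref{distM1}(2), the $M_1$ distance between $H\dek$ and $\mathcal{H}\dek$ (resp.\ between $L\dek$ and $\mathcal{L}\dek$) tends to $0$ in $\ov{\P}$-probability, so it suffices to prove the stated convergence for $\mathcal{H}\dek$ and $\mathcal{L}\dek$. Using the decomposition $l\dek(n) = e\dek(n)\, G(\theta_{x\dek(n)}\tau)$ from~(\ref{defeden}), replacing $G(\theta_{x\dek(n)}\tau)$ by $\ov{G}(\tau\dek(n))$ via Proposition~\ref{lde}, and exploiting the uniform bound on $G$ (Proposition~\ref{compG}), the tightness of $\eps_k^{1/\alpha}\tau_{x\dek(n)}$ (Proposition~\ref{taulim}), the boundedness of $h$, and the Poisson tightness of the jump indices (Proposition~\ref{poisson}) to control the number of non-negligible terms, one sees that $\mathcal{H}\dek$ is $M_1$-close to
$$
\widetilde{\mathcal{H}}\dek(t) \ = \ \sum_{n \ge 1} e\dek(n) \ \ov{G}(\tau\dek(n)) \ \eps_k^{1/\alpha} \tau_{x\dek(n)} \ \1_{\{t \ge \eps_k T\dek(n)\}},
$$
with the analogous substitution for $\mathcal{L}\dek$. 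Truncating these sums at the first $N$ terms introduces an error that, by Proposition~\ref{poisson} applied both to the pre-limit and to the limit, is uniformly small in $k$ once $N$ is taken large. Hence it is enough to prove the convergence of the $N$-term truncations for each fixed $N$.

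Second, consider the map
$$
\Phi_N : ((h_n, t_n))_{1 \le n \le N} \ \longmapsto \ \sum_{n = 1}^{N} h_n \ \1_{\{\cdot \ge t_n\}} \ \in \ D([0,t],\R).
$$
At any configuration whose $t_n$'s are pairwise distinct and different from $t$, the map $\Phi_N$ is continuous for the $M_1$ topology; it is also continuous for evaluation at any finite set of times that avoid the $t_n$'s, which will provide the finite-dimensional statement. By Proposition~\ref{poisson}, the limiting times $T_\delta(n)$ form a Poisson process on $\R_+$, so almost surely all coordinates are distinct and none coincides with a prescribed $t$ (or with a finite set of evaluation points). Together with the continuity of $\ov{G}$ on the space of environments excluding the origin (with the product topology), a property which follows from the series representation of the Green function and the uniform bounds of the appendix, the joint convergence~(\ref{convbigjoint}) and continuous mapping yield
$$
\big( e\dek(n)\, \ov{G}(\tau\dek(n))\, \eps_k^{1/\alpha}\tau_{x\dek(n)},\ \eps_k T\dek(n) \big)_{n \le N} \xrightarrow[k \to \infty]{\text{law}} \big( e_\delta(n)\, \ov{G}(\tau_\delta(n))\, \tau^\circ_\delta(n),\ T_\delta(n) \big)_{n \le N}.
$$
A further application of continuous mapping through $\Phi_N$ then gives the $M_1$ and finite-dimensional convergence of the truncated $\widetilde{\mathcal{H}}^{(\eps_k),N}_\delta$ to the truncated $H_\delta^N$, which by the first step concludes the argument for $H\dek \to H_\delta$. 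The parallel argument with heights $e\dek(n)(\ov{G} h)(\tau\dek(n))$ handles $L\dek \to L_\delta$.

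The main technical subtlety is the $M_1$-continuity of $\Phi_N$ at configurations of distinct jump times, for which the explicit parametric representation of completed graphs in \cite{whitt} is convenient; the secondary point, the continuity of $\ov{G}$ in the product topology, is a potential-theoretic fact which should be recorded alongside the appendix results on $G$ and $\ov{G}$. Everything else is a routine combination of the truncation and approximation estimates with the joint convergence hypothesis~(\ref{convbigjoint}).
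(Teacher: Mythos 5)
Your proposal is correct, and the preliminary reductions (passing to $\mathcal{H}\dek$ and $\mathcal{L}\dek$ via Proposition~\ref{distM1}, then substituting $\ov{G}(\tau\dek(n))$ for $G(\theta_{x\dek(n)}\,\tau)$ via Proposition~\ref{lde}, and using that $h$ does not depend on the coordinate at the origin) coincide with the paper's. Where you diverge is the final convergence step. The paper invokes Skorokhod's representation theorem to replace the convergence in law~(\ref{convbigjoint}) by almost sure convergence, deduces pointwise convergence of the resulting step processes at every $s$ outside the (almost surely discrete and non-degenerate) set of limiting jump times, and then concludes by Whitt's criterion \cite[Theorem~12.5.2~(iii)]{whitt}, which for monotone functions requires nothing beyond pointwise convergence on a dense set containing the endpoints, since the oscillation function vanishes. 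You instead truncate to $N$ terms (justified uniformly in $k$ by the Poisson tightness of the jump counts) and push the joint convergence of the finitely many heights and times through the map $\Phi_N$, whose $M_1$-continuity at configurations of distinct jump times you must then establish; this buys you the finite-dimensional statement as a by-product of the same mapping, at the cost of having to verify the continuity of $\Phi_N$ explicitly, whereas the paper gets the finite-dimensional statement separately (and first) from the $M_1$ convergence via the projection argument. Both routes rest on the same two facts you correctly isolate: almost sure distinctness of the $T_\delta(n)$, and continuity of $\ov{G}$ for the product topology on environments. Note that the latter is used silently by the paper as well (it is what makes~(\ref{cvponctuel}) true term by term), so flagging it as a point deserving a proof alongside Proposition~\ref{compG} is fair and not a defect of your argument relative to the paper's.
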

\noindent \textbf{Remark.} One can check by a careful reading of the proof below that the convergence of the joint law of $(H\dek, L\dek)$ holds. Yet, as we are not interested in this stronger fact, Proposition~\ref{limitHform} should be understood in the sense of separate convergence of the laws of $H\dek$ and $L\dek$.
\begin{proof}
We begin by showing that, if convergence holds for the $M_1$ topology, then it also holds in the sense of finite-dimensional distributions. For $t > 0$, let $t_1,\ldots,t_n \in [0,t]$, and consider the projection 
$$
\pi : 
\left\{
\begin{array}{ccc}
D([0,t],\R) & \to & \R^{n} \\
Z & \mapsto & (Z_{t_1},\ldots, Z_{t_n}).
\end{array}
\right.
$$
Proposition~\ref{poisson} ensures that almost surely, the sequence of jumps $(T_\delta(n))_{n \in \N}$ does not intersect the set $\{t_1,\cdots, t_n\}$.The limit processes $H_\delta$ and $L_\delta$ (restricted to $[0,t]$) are therefore almost surely inside the set of continuity points of $\pi$, and the claim follows using the continous mapping theorem \cite[Theorem~2.7]{bill}.

For $t > 0$, let us prove the convergence of $H\dek$ to $H_\delta$ for the $M_1$ topology on $D([0,t],\R)$. We recall from (\ref{defmclH}) and (\ref{defeden}) that
\begin{equation}
\label{rappeldefmclH}
\mathcal{H}\dek(\cdot) = \sum_{n = 1}^{+\infty} e\dek(n) G\left(\theta_{x\dek(n)} \ \tau\right) \ \eps_k^{1/\alpha} \tau_{x\dek(n)} \ \1_{\{\cdot \ge \eps_k T\dek(n)\}}.
\end{equation}
By Proposition~\ref{distM1}, it is enough to show that $\mathcal{H}\dek$ converges in distribution to $H_\delta$. Moreover, by Proposition~\ref{lde}, we may as well replace $\mathcal{H}\dek$ by the process
\begin{equation}
\label{remplacemclH}
\sum_{n = 1}^{+\infty} e\dek(n) \ov{G}\left(\tau\dek(n)\right) \ \eps_k^{1/\alpha} \tau_{x\dek(n)} \ \1_{\{\cdot \ge \eps_k T\dek(n)\}}.
\end{equation}

By Skorokhod's representation theorem \cite[Theorem~6.7]{bill}, there exist random variables 
$$
\left(\td{T}\dek(n), \eps_k^{1/\alpha} \td{\tau}_{x\dek(n)}, \td{\tau}\dek(n) ,\td{e}\dek(n)\right)_{n \in \N}
$$
that, for fixed $k \in \N$, have the same joint law as 
$$
\left(T\dek(n), \eps_k^{1/\alpha} \tau_{x\dek(n)}, \tau\dek(n) ,e\dek(n)\right)_{n \in \N},
$$
and converge almost surely, as $k$ goes to infinity, to other random variables that we write
\begin{equation}
\label{tdbigjointlimit}
\left(\td{T}_\delta(n), \td{\tau}^\circ_\delta(n), \td{\tau}_\delta(n) , \td{e}_\delta(n) \right)_{n \in \N}.
\end{equation}
Naturally, the random variables in (\ref{tdbigjointlimit}) have the same joint law as the ones in (\ref{bigjointlimit}).
Let $\td{\mathcal{H}}\dek$ be the process defined by
$$
\td{\mathcal{H}}\dek(s) = \sum_{n = 1}^{+\infty} \td{e}\dek(n) \ov{G}\left(\td{\tau}\dek(n)\right) \ \eps_k^{1/\alpha} \td{\tau}_{x\dek(n)} \ \1_{\{s \ge \eps_k \td{T}\dek(n)\}}.
$$
The process $\td{\mathcal{H}}\dek$ has the same law as the one defined in (\ref{remplacemclH}). We will show that it converges almost surely (for Skorokhod's $M_1$ topology) to the process $\td{H}_\delta$ defined by
$$
\td{H}_\delta(s) = \sum_{n = 1}^{+\infty} \td{e}_\delta(n) \ov{G}(\td{\tau}_\delta(n)) \td{\tau}^\circ_\delta(n)  \1_{\{s \ge \td{T}_\delta(n)\}}.
$$
This result would prove the proposition, as it is clear that $\td{H}_\delta$ and $H_\delta$ have the same distribution. 

Because of Proposition~\ref{poisson}, we know that jump instants are almost surely distinct in the limit, hence for any $s$ that does not belong to $\{\td{T}_\delta(n), n \in \N\}$, one has
\begin{equation}
\label{cvponctuel}
\td{\mathcal{H}}\dek(s) \xrightarrow[k \to + \infty]{} \td{H}_\delta(s).
\end{equation}
We can then apply the criterion for $M_1$ convergence given in \cite[Theorem 12.5.2 (iii)]{whitt}, noting that the oscillation function appearing in this criterion is zero for increasing functions.

The same proof applies as well for $L\dek$. Indeed, Proposition~\ref{distM1} ensures that one can approximate the process by $\mathcal{L}\dek$ in (\ref{defmclL}), which, thanks to Proposition~\ref{lde}, is in turn well approximated by
$$
\sum_{n = 1}^{+\infty} e\dek(n) \ov{G}\left(\tau\dek(n)\right) \ h\left(\theta_{x\dek(n)} \ \tau \right) \ \1_{\{\cdot \ge \eps_k T\dek(n)\}}.
$$
As the function $h$ is such that $h(\tau)$ does not depend on $\tau_0$, one has
$$
h\left(\theta_{x\dek(n)} \ \tau \right) = h\left(\tau\dek(n)\right),
$$
and the rest of the proof follows.
\end{proof}

Before being able to show that the environment around a trap has a unique possible limit law, and to describe it explicitly, we need to show independence between $e_\delta(n)$ and $\tau_\delta(n)$. Having in mind that we will need to study the jumps of $H\de$ as well, we will show the following stronger result.
\begin{prop}
\label{trioindep}
Let $(\eps_k)$ be a sequence such that (\ref{convbigjoint}) holds.
The random variables $\tau^\circ_\delta(n)$, $\tau_\delta(n)$ and $e_\delta(n)$ are independent.
\end{prop}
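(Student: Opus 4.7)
The plan is to establish the independence in two steps: first that $e_\delta(n)$ is independent of the pair $(\tau_\delta(n), \tau^\circ_\delta(n))$, and then that $\tau_\delta(n)$ and $\tau^\circ_\delta(n)$ are independent of each other. Both follow from exact pre-limit identities that survive passage to the limit (\ref{convbigjoint}).

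For the first step, I integrate the identity of Proposition~\ref{visittrap}(2) against the environment. Summing over $x$ and inserting bounded continuous test functions $f$ of the environment around the trap (away from the origin) and $g$ of the rescaled depth, one obtains
$$\ov{\E}\bigl[\1_{\{e\de(n)\ge u\}}\1_{\{\mathcal{T}_{x\de(n)}<\infty\}}\,f(\tau\de(n))\,g(\eps^{1/\alpha}\tau_{x\de(n)})\bigr] = e^{-u}\,\ov{\E}\bigl[\1_{\{\mathcal{T}_{x\de(n)}<\infty\}}\,f(\tau\de(n))\,g(\eps^{1/\alpha}\tau_{x\de(n)})\bigr].$$
Since $\ov{\P}[\mathcal{T}_{x\de(n)}<\infty]\to 1$ by Proposition~\ref{visittrap}(1), the indicator drops out in the limit along $(\eps_k)$, and one reads off
$$\ov{\E}\bigl[\1_{\{e_\delta(n)\ge u\}}\,f(\tau_\delta(n))\,g(\tau^\circ_\delta(n))\bigr] = e^{-u}\,\ov{\E}\bigl[f(\tau_\delta(n))\,g(\tau^\circ_\delta(n))\bigr],$$
i.e.\ $e_\delta(n)$ is independent of $(\tau_\delta(n),\tau^\circ_\delta(n))$, with $e_\delta(n)$ exponential of parameter $1$.

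For the second step, I mimic the partitioning used in the proof of Proposition~\ref{tauiid}. Let $E_k$ denote the set of paths $\gamma$ that discover at least $k$ sites while $\gamma^\leftarrow$ discovers strictly fewer. Decompose
$$\ov{\E}[f(\tau\de(n))\,g(\eps^{1/\alpha}\tau_{x\de(n)})] = \sum_{k,S,\gamma}\E\bigl[\PPt_0[Y=\gamma]\,\1_{A'(\gamma,S)}\,\1_{\{\tau_{x_k(\gamma)}\ge \delta\eps^{-1/\alpha}\}}\,f((\tau_{x_k(\gamma)+z})_{z\ne 0})\,g(\eps^{1/\alpha}\tau_{x_k(\gamma)})\bigr],$$
summing over $k\ge n$, $\gamma\in E_k$ and $S\subset\{1,\ldots,k-1\}$ with $|S|=n-1$, where $A'(\gamma,S)$ is the event that the deep traps among $x_1(\gamma),\ldots,x_{k-1}(\gamma)$ are precisely those indexed by $S$. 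Setting $x=x_k(\gamma)$, the defining property of $E_k$ gives $x\notin\mcD(\gamma^\leftarrow)$ and $x\ne x_i(\gamma)$ for $i<k$, so each of $\PPt_0[Y=\gamma]$, $\1_{A'(\gamma,S)}$ and $f((\tau_{x+z})_{z\ne 0})$ depends only on $(\tau_y)_{y\ne x}$, while $\1_{\{\tau_x\ge \delta\eps^{-1/\alpha}\}}\,g(\eps^{1/\alpha}\tau_x)$ depends only on $\tau_x$. The product structure of $\P$ factors each summand as $\E[\1_{\{\tau_0\ge \delta\eps^{-1/\alpha}\}}g(\eps^{1/\alpha}\tau_0)]$ (independent of $x$ by translation invariance) times a term not involving $g$. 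Specializing first to $g\equiv 1$ and then to $f\equiv 1$ and eliminating the common $(\gamma,S)$-sum yields the exact pre-limit identity
$$\ov{\E}[f(\tau\de(n))\,g(\eps^{1/\alpha}\tau_{x\de(n)})] = \ov{\E}[f(\tau\de(n))]\cdot \ov{\E}[g(\eps^{1/\alpha}\tau_{x\de(n)})],$$
which passes to the limit along $(\eps_k)$ and gives $\tau_\delta(n)\perp\tau^\circ_\delta(n)$. Combining with step~1 produces the claimed mutual independence.

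The main delicate point is the cleanness of the factorization in step~2. It rests on the fact that for $\gamma\in E_k$ the site $x_k(\gamma)$ is genuinely fresh, in the sense that $\tau_{x_k(\gamma)}$ does not appear anywhere in $\PPt_0[Y=\gamma]$; concretely, $x_k(\gamma)$ is a $1$-neighbor of $\gamma_{|\gamma|}$ but is not adjacent to any $\gamma_i$ with $i<|\gamma|$. This is precisely the observation underlying the proof of Proposition~\ref{tauiid}, here applied simultaneously to the extra $f$- and $g$-factors.
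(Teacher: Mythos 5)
Your proof is correct and follows essentially the same route as the paper's: the exponential factor $e_\delta(n)$ is split off exactly as in the paper via Proposition~\ref{visittrap}(2), and the independence of $\tau_\delta(n)$ and $\tau^\circ_\delta(n)$ rests on the same observation that, under the product measure $\P$, neither the event selecting which site is the $n^{\text{th}}$ deep trap nor the surrounding environment involves $\tau_{x\de(n)}$ itself. The only difference is bookkeeping: you exhibit the factorization by summing over paths as in Proposition~\ref{tauiid}, whereas the paper partitions over the trap location $x$, conditions on $\{x \text{ deep}\}$, and uses that $\PPt_0[x\de(n)=x]$ coincides on that event with the probability of an event not depending on $\tau_x$.
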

\begin{proof}
Let $f_1,f_2 : \R \to \R$ and $f_3 : \R^{\Z^d \setminus \{0\}} \to \R$ be three bounded continuous functions. We are interested in
$$
\ov{\E}\left[ f_1(e_\delta(n)) f_2(\tau^\circ_\delta(n)) f_3(\tau_\delta(n)) \right].
$$
Because of part 1 of Proposition~\ref{visittrap}, this expectation can be obtained as the limit as $k$ tends to infinity of
$$
\ov{\E}\left[ f_1(e\dek(n)) f_2(\eps_k^{1/\alpha} \tau_{x\dek(n)}) f_3(\tau\dek(n)) \ \1_{\{ \mathcal{T}_{x\dek(n)} < \infty \}} \right].
$$
Observe that, in a fixed environment, $f_2(\eps_k^{1/\alpha} \tau_{x\dek(n)}) f_3(\tau\dek(n))$ is a function of $x\dek(n)$ only. Using the second part of Proposition~\ref{visittrap}, we obtain~:
\begin{multline*}
\EEt_0\left[ f_1(e\dek(n)) f_2(\eps_k^{1/\alpha} \tau_{x\dek(n)}) f_3(\tau\dek(n)) \ \1_{\{ \mathcal{T}_{x\dek(n)} < \infty \}} \right] \\
= \int f_1(x) e^{-x}  \d x \ \EEt_0\left[ f_2(\eps_k^{1/\alpha} \tau_{x\dek(n)}) f_3(\tau\dek(n)) \ \1_{\{ \mathcal{T}_{x\dek(n)} < \infty \}} \right].
\end{multline*}
We are thus left with the study of
\begin{equation}
\label{indeptautau0}
\ov{\E}\left[ f_2(\eps_k^{1/\alpha} \tau_{x\dek(n)}) f_3(\tau\dek(n)) \right].
\end{equation}
We partition according to the events $\{x\dek(n) = x\}$, for $x \in \Z^d$~:
$$
\sum_{x \in \Z^d} \ov{\E}\left[ f_2(\eps_k^{1/\alpha} \tau_{x}) f_3(\theta_x \ \tau) \ \1_{\{x\dek(n) = x\}} \right].
$$
We recall that we say that a site $x$ is deep if $\eps^{1/\alpha} \tau_x \ge \delta$. Noting that on the event $x\dek(n) = x$, the site $x$ is deep, one can rewrite the generic term of the sum above as~:
\begin{multline}
\label{indeptautau}
\ov{\E}\left[ f_2(\eps_k^{1/\alpha} \tau_{x}) f_3(\theta_x \ \tau) \ \1_{\{x\dek(n) = x\}} \ \big| \ x \text{ deep} \right] \ \P[x \text{ deep} ] \\
= \E \left[ f_2(\eps_k^{1/\alpha} \tau_{x}) f_3(\theta_x \ \tau) \ \PPt_0[x\dek(n) = x] \ \big| \ x \text{ deep} \right] \ \P[x \text{ deep} ]     .
\end{multline}
Let us write $\mathcal{A}$ for the event ``there are exactly $n-1$ deep traps discovered before the walk discovers site $x$''. Conditionally on the fact that the site $x$ is deep, we have
\begin{equation}
\label{equalityprobab}
\PPt_0[x\dek(n) = x] = \PPt_0[\mathcal{A}].
\end{equation}
As the law of the trajectory up to the instant of discovery of site $x$ does not depend on $\tau_x$, the probability $\PPt_0[\mathcal{A}]$ does not depend on $\tau_x$. Moreover, by the definition of $f_3$, the quantity $f_3(\theta_x \ \tau)$ does not depend on $\tau_x$ either. Using the fact that the measure $\P$ conditioned on the event that $x$ is a deep trap is a product measure, we obtain that the first expectation appearing in (\ref{indeptautau}) is equal to
$$
\E \left[ f_2(\eps_k^{1/\alpha} \tau_{x})  \ \big| \ x \text{ deep} \right] \E\left[f_3(\theta_x \ \tau) \ \PPt_0[\mathcal{A}]  \ \big| \ x \text{ deep} \right].
$$
Using (\ref{equalityprobab}) once more, we observe that
\begin{eqnarray*}
\E\left[f_3(\theta_x \ \tau) \ \PPt_0[\mathcal{A}]  \ \big| \ x \text{ deep} \right] &  = & \E\left[f_3(\theta_x \ \tau) \ \PPt_0[x\dek(n) = x]  \ \big| \ x \text{ deep} \right] \\
& = & \ov{\E}\left[ f_3(\theta_x \ \tau) \ \1_{\{x\dek(n) = x\}} \ \big| \ x \text{ deep} \right] .
\end{eqnarray*}
As a consequence, the product in (\ref{indeptautau}) is equal to
$$
\ov{\E}\left[ f_2(\eps_k^{1/\alpha} \tau_{x}) \ \big| \ x \text{ deep} \right] \ \ov{\E}\left[ f_3(\theta_x \ \tau) \ \1_{\{x\dek(n) = x\}} \ \big| \ x \text{ deep} \right] \ \P[x \text{ deep} ].
$$
The first expectation does not depend on $x$. The two last terms can be merged together to make the conditioning disappear. Summing over all $x \in \Z^d$, we recover the expectation in (\ref{indeptautau0}), which is therefore equal to~:
$$
\ov{\E}\left[ f_2(\eps_k^{1/\alpha} \tau_{0}) \ \big| \ 0 \text{ deep} \right] \ \ov{\E}\left[ f_3(\tau\dek(n)) \right].
$$
This proves the independence of the random variables we were interested in, taking the limit $k \to +\infty$.
\end{proof}

We are now able to show that the environment around the first deep trap converges in distribution. We have already seen in Proposition~\ref{tightenv} that $\tau\de(1)$ is tight. Hence, what we need to see is that there is only one possible limit law.
\begin{prop}
\label{convenv}
Let $(\eps_k)$ be a sequence such that (\ref{convbigjoint}) holds. The law of $\tau_\delta(1)$ is characterized by the fact that, for any test function $h$~:
\begin{equation}
\label{charactenv}
c \ \ov{\E}\left[ (\ov{G} h)(\tau_\delta(1))  \right] =  \E[h(\tau)],
\end{equation}
where $c$ is the constant appearing in Proposition~\ref{range}. In particular, the law of $\tau_\delta(1)$ does not depend on the sequence $(\eps_k)$, nor on $\delta$.
\end{prop}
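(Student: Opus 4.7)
The plan is to compute $\ov{\E}[L\dek(t)]$ in two different ways and equate the resulting limits as $k \to \infty$.

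First, I would apply the stationarity of the environment viewed by the particle (Proposition~\ref{ergodicity}) directly to the defining integral (\ref{defLde}). Since $h(\tau)$ does not depend on $\tau_0$ and the coordinates of $\tau$ are independent under $\P$,
\begin{equation*}
\ov{\E}[L\de(t)] = \eps^{-1} t \, \E\!\left[h(\tau) \1_{\{\eps^{1/\alpha} \tau_0 \ge \delta\}}\right] = \eps^{-1} t \, \E[h(\tau)] \, \P[\eps^{1/\alpha} \tau_0 \ge \delta].
\end{equation*}
Applying (\ref{regvar}), this converges to $t \delta^{-\alpha} \E[h(\tau)]$ as $\eps \to 0$.

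Second, I would use Proposition~\ref{limitHform} to transport the computation to the limit $L_\delta$ given by (\ref{convL}). Proposition~\ref{indepincrL} asserts that $L_\delta$ is a subordinator, and its jumps occur at the Poisson times $(T_\delta(n))_{n \in \N}$ of intensity $c \delta^{-\alpha}$ provided by Proposition~\ref{poisson}. Proposition~\ref{trioindep} (or rather its specialization) yields the independence of $e_\delta(1)$ and $\tau_\delta(1)$, and part~(3) of Proposition~\ref{visittrap} shows that $e_\delta(1)$ is an exponential variable of parameter~$1$. Hence the L\'evy--Khintchine formula for the mean reduces to
\begin{equation*}
\ov{\E}[L_\delta(t)] = t c \delta^{-\alpha}\, \ov{\E}\!\left[e_\delta(1)(\ov{G} h)(\tau_\delta(1))\right] = t c \delta^{-\alpha}\, \ov{\E}\!\left[(\ov{G} h)(\tau_\delta(1))\right].
\end{equation*}
Equating the two limits yields (\ref{charactenv}) directly.

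The main obstacle is the interchange of limit and expectation, namely $\ov{\E}[L\dek(t)] \to \ov{\E}[L_\delta(t)]$, since convergence in distribution does not yield convergence of means without uniform integrability of $(L\de(t))_{\eps > 0}$. I would address this by obtaining a uniform second-moment bound: since $h$ is bounded, $L\de(t)$ is controlled by a constant multiple of $\int_0^{\eps^{-1}t} \1_{\{\eps^{1/\alpha}\tau_{\hX_s}\ge \delta\}}\d s$, which can be decomposed over the discovered sites and estimated through $\ov{\E}[r(t)^2] \lesssim (t+1)^2$ from Proposition~\ref{discovered} combined with the tail estimate~(\ref{regvar}). This is the content of the integrability lemma flagged as Lemma~\ref{unifint}.

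For the characterization claim, I would appeal to Proposition~\ref{compG}, which states that $\ov{G}$ is a strictly positive bounded function of $(\tau_z)_{z \neq 0}$. Consequently, as $h$ ranges over the class of test functions, $\ov{G} h$ ranges over a measure-determining family of bounded continuous functions on the space of environments stripped of the coordinate at the origin, so (\ref{charactenv}) uniquely pins down the law of $\tau_\delta(1)$. Since the right-hand side of (\ref{charactenv}) depends neither on $\delta$ nor on the subsequence $(\eps_k)$, neither does this law.
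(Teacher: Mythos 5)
Your proposal is correct and follows essentially the same route as the paper: compute $\ov{\E}[L\de(t)]$ by stationarity of the environment viewed by the particle, compute $\ov{\E}[L_\delta(t)]$ from the subordinator structure of the limit (jump rate $c\delta^{-\alpha}$ from Proposition~\ref{poisson}, independence and the exponential law of $e_\delta(1)$ from Propositions~\ref{trioindep} and~\ref{visittrap}), and bridge the two via the uniform integrability of $(L\de(t))_{\eps>0}$, which is exactly Lemma~\ref{unifint}. The one place you diverge is the final characterization step, and there is a subtlety worth making explicit: Proposition~\ref{compG} bounds $\ov{G}$ from above and gives pointwise strict positivity, but $\ov{G}$ is \emph{not} bounded away from zero (the conductances are unbounded), so one cannot simply take $h = \ov{G}^{-1}f$ as a test function. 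Your abstract argument survives this if read as: the identity determines the finite measure $\ov{G}\,\d\mu$ against all bounded continuous functions of $(\tau_z)_{z\neq 0}$, and then $\mu$ is recovered by dividing by the a.e.\ positive function $\ov{G}$. The paper implements this division concretely, taking $h = \ov{G}^{-1}f\,\1_{\{\ov{G}\ge\eta\}}$ and letting $\eta\to 0$ by monotone convergence; this has the added benefit of producing the explicit density $\d\P/(c\,\ov{G})$ for the law of $\tau_\delta(1)$, which is used later (e.g.\ in the remark identifying $c = \E[\ov{G}(\tau)^{-1}]$ and in the computation of $\psi$ in Proposition~\ref{cvH}), whereas your version yields only uniqueness.
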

\begin{proof}
The proof uses the fact that the expectation of $L\de(t)$ is easy to compute. Hence, we need to change the convergence in distribution in Proposition~\ref{limitHform} into convergence in the mean. This is done by the following lemma, which we momentarily admit.
\begin{lem}
\label{unifint}
For any $t \ge 0$, the family of random variables $(L\de(t))_{\eps > 0}$ is uniformly integrable.
\end{lem}
From the definition of $L\de$ in (\ref{defLde}), and using the stationarity of the environment viewed by the particle, we have
$$
\ov{\E}\left[ L\de(t) \right] = \eps^{-1} t \E[h(\tau) \ \1_{\eps^{1/\alpha} \tau_{0} \ge \delta} ].
$$
Because $h(\tau)$ does not depend on $\tau_0$, the expectation above is equal to
$$
\E[h(\tau)] \ \P[{\eps^{1/\alpha} \tau_{0} \ge \delta} ].
$$
Using the tail behaviour of $\tau_0$ given in (\ref{regvar}), we finally obtain that
\begin{equation}
\label{ELt0}
\ov{\E}\left[ L\de(t) \right] \xrightarrow[\eps \to 0]{} \frac{t}{\delta^\alpha} \E[h(\tau)].
\end{equation}
We now compute the expectation of the limit $L_\delta(t)$ (see (\ref{convL})). 
%
Note first that the quantity
$$
\ov{\E}\left[ \sum_{n = 2}^{+ \infty} \1_{\{ t \ge T_\delta(n) \}} \right]
$$
is the expected number of points from $(T_\delta(n))_{n \in \N \setminus \{1\}}$ that fall within $[0,t]$. Because of Proposition~\ref{poisson}, it is $O(t^2)$ when $t$ goes to $0$. Moreover, as given by Proposition~\ref{indepincrL}, the process $L_\delta$ is a subordinator. In particular, the jump instants are independent from the heights of the jumps, so that we have
$$
\ov{\E}\left[ L_\delta(t) \right] = \ov{\E}[e_\delta(1) (\ov{G} h)(\tau_\delta(1))] \ \ov{\P}[t \ge T_\delta(1)] + O(t^2) \qquad (t \to 0).
$$
Here, we used the fact that, as the function $h$ takes values in $(0,+\infty)$, the quantity $e_\delta(1) (\ov{G} h)(\tau_\delta(1))$ is non-zero, and there is indeed a jump at $T_\delta(1)$.

We saw in Proposition~\ref{visittrap} that $e_\delta(1)$ is an exponential random variable of parameter $1$, and in Proposition~\ref{trioindep} that it is independent from $\tau_\delta(1)$, hence
$$
\ov{\E}[e_\delta(1) (\ov{G} h)(\tau_\delta(1))] = \ov{\E}[(\ov{G} h)(\tau_\delta(1))].
$$
From Proposition~\ref{poisson}, we know that
$$
\ov{\P}[t \ge T_\delta(1)] = \frac{c t}{\delta^\alpha} + O(t^2),
$$
so we obtain~:
$$
\ov{\E}\left[ L_\delta(t) \right] = \frac{c t}{\delta^\alpha} \ov{\E}[(\ov{G} h)(\tau_\delta(1))] + O(t^2).
$$
Comparing this with (\ref{ELt0}) leads to (\ref{charactenv}). Let us see that this relation characterizes the law of $\tau_\delta(1)$. First, one can check that the relation (\ref{charactenv}) still holds without the restriction that the function $h$ should have values only in $(0,+\infty)$. Let $f$ be a positive bounded continuous function, such that $f(\tau)$ does not depend on $\tau_0$. For $\eta > 0$, we define $h$ as
$$
h(\tau) = \left|
\begin{array}{ll}
\ov{G}(\tau)^{-1} f(\tau) & \text{if } \ov{G}(\tau) \ge \eta \\
0 & \text{otherwise.}
\end{array}
\right.
$$
Then, from (\ref{charactenv}), one has~:
$$
c \ \ov{\E}\left[f(\tau_\delta(1)) \1_{\{ \ov{G}(\tau_\delta(1)) \ge \eta \}} \right] =  \E\left[\ov{G}(\tau)^{-1} f(\tau) \1_{\{ \ov{G}(\tau) \ge \eta \}}\right].
$$
Taking the limit as $\eta$ tends to $0$, and using monotone convergence theorem, we obtain~:
\begin{equation}
\label{charactbis}
c \ \ov{\E}\left[f(\tau_\delta(1))\right] =  \E\left[\ov{G}(\tau)^{-1} f(\tau)\right].
\end{equation}
Being valid for any positive bounded continuous function, equation (\ref{charactbis}) determines the law of $\tau_\delta(1)$.
\end{proof}
\begin{proof}[Proof of Lemma~\ref{unifint}]
We will use the following upper bound on $L\de(t)$ (see (\ref{Lderewrit}))~:
\begin{equation}
\label{upperboundLde}
L\de(t) \le \|h\|_\infty \sum_{n = 1}^{+\infty} l\de(n) \1_{\{ t \ge T\de(n) \}}.
\end{equation}
Let $N\de(t)$ be the number of $n$'s such that $T\de(n)$ falls inside $[0,t]$~:
$$
N\de(t) = \sum_{n = 1}^{+\infty} \1_{\{ t \ge T\de(n) \}}.
$$
Let $N$ be a positive integer, and $u$ a positive real number. From (\ref{upperboundLde}), we have the following upper bound on the tail distribution of $L\de(t)$~:
\begin{equation}
\label{boundtailL}
\ov{\P}[L\de(t) \ge \|h\|_\infty N u] \le \ov{\P}[\exists n \le N : l\de(n) \ge u] + \ov{\P}[N\de(t) > N].
\end{equation}
The first term of the sum is bounded by
\begin{equation}
\label{boundtailL1}
\sum_{n = 1}^{N} \ov{\P}[l\de(n) \ge u].
\end{equation}
Moreover, the random variable $l\de(n)$ either is equal to $0$ if the trap $x\de(n)$ is not actually visited, or is an exponential random variable which parameter is the inverse of the Green function at $x\de(n)$. We know from Proposition \ref{compG} of the Appendix that the Green function is uniformly bounded by a constant, say $C$, hence $l\de(n)$ is stochastically dominated by an exponential random variable of parameter $C^{-1}$ (uniformly in $n$ and in $\eps$). As a consequence, the sum in (\ref{boundtailL1}) is bounded by
$$
N e^{-u/C}.
$$
Let us know examine the rightmost term in (\ref{boundtailL}). We recall that the sequence of sites discovered by the random walk up to time $\eps^{-1} t$ is $(x_i)_{i \le r(\eps^{-1} t)}$. Let $B\de(i)$ be the indicator of the event that the site $x_i$ is a deep trap~:
$$
B\de(i) = \1_{\{ \eps^{1/\alpha} \tau_{x_i} \ge \delta \}}.
$$
Then one can rewrite $N\de(t)$ as
$$
N\de(t) = \sum_{i = 1}^{r(\eps^{-1} t)} B\de(i),
$$
which enables us to decompose the rightmost term in (\ref{boundtailL}) as~:
\begin{equation}
\label{boundtail3}
\ov{\P}\left[ \sum_{i = 1}^{r(\eps^{-1} t)} B\de(i) > N \right] \le \ov{\P}\left[ \sum_{i = 1}^{\eps^{-1} I } B\de(i) > N \right] + \ov{\P} [\eps r(\eps^{-1} t) > I],
\end{equation}
where $I$ is any positive integer. We begin by bounding the first term of this sum.
From Proposition~\ref{tauiid}, we know that $(B\de(i))_{i \in \N}$ forms a family of independent Bernoulli random variables of parameter
$$
\P[\eps^{1/\alpha} \tau_0 \ge \delta].
$$
According to (\ref{regvar}), this quantity is equivalent to $\eps \delta^{-\alpha}$ as $\eps$ tends to $0$. It is therefore smaller than $c_0 \eps$ for some large enough $c_0$, uniformly over $\eps$. We obtain, using Chebychev inequality~:
$$
\ov{\P}\left[ \sum_{i = 1}^{\eps^{-1} I} B\de(i) \ge N \right] \le e^{-N} \ov{\E}[\exp(B\de(1))]^{\eps^{-1} I}.
$$
Using the fact that $\ov{\E}[\exp(B\de(1))] \le 1 + c_0 \eps (e-1)$, we can bound the former by
$$
\exp\left( -N + \eps^{-1} I \ln(1 + \eps c_0 (e-1))  \right) \le \exp\left( -N +    I c_0 (e-1) \right).
$$
Choosing $I = c_1 N$ with $c_1 > 0$ small enough, this quantity decays exponentially fast as $N$ goes to infinity. We now turn to the second term on the right hand side of (\ref{boundtail3}), keeping $I = c_1 N$.
$$
\ov{\P} [\eps r(\eps^{-1} t) > c_1 N] \le \frac{\ov{\E}[(\eps r(\eps^{-1} t))^2]}{(c_1 N)^2},
$$
and Proposition~\ref{discovered} ensures that the numerator is uniformly bounded as $\eps$ varies. 

We have thus shown that there exists $C > 0$ such that, for any $\eps > 0$, one has~:
\begin{equation}
\label{endoftail}
\ov{\P}[L\de(t) \ge N u] \le N e^{-u/C} + e^{-N/C} + \frac{C}{N^2}.
\end{equation}
From this control of the tail of $L\de(t)$, one can check that
$$
\sup_{\eps > 0} \ov{\E}\left[\left( L\de(t) \right)^{3/2}\right]
$$
is finite (choosing for instance $u = N^{1/5}$ in (\ref{endoftail})), and this is a sufficient condition to ensure uniform integrability.
\end{proof}
\noindent \textbf{Remark.} From the relations (\ref{charactenv}) and (\ref{charactbis}), one obtains that
$$
c = \E\left[\ov{G}(\tau)^{-1}\right] = \left(\ov{\E}\left[\ov{G}(\tau_\delta(1))\right]\right)^{-1}.
$$
%
%
%
%
%
%

\section{Identification of the limit}
\label{s:identif}
\setcounter{equation}{0}
In this section, we begin by proving that $H\de$ converges in distribution as $\eps$ tends to $0$, and describe the limit subordinator in terms of its Laplace transform. Then, by an interversion of limits, we obtain the convergence of the law of $H\e$ as $\eps$ tends to $0$. We start with a summary of previous results.

\begin{prop}
\label{trioconv}
Let $(\eps_k)$ be a sequence such that (\ref{convbigjoint}) holds. The joint distribution of $(\tau_\delta^\circ(1),e_\delta(1),\tau_\delta(1))$ does not depend on the sequence $(\eps_k)$, and is described as follows~: the three components are independent, and their respective distributions are given by Propositions~\ref{taulim}, \ref{visittrap} and \ref{convenv}.
\end{prop}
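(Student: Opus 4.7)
The proposition is essentially a bookkeeping result that assembles ingredients already established, so the plan is to cite each of them and check the pieces fit together.

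First, I would invoke Proposition~\ref{trioindep} applied to $n = 1$: along the subsequence $(\eps_k)$ chosen so that (\ref{convbigjoint}) holds, the triple $(\tau_\delta^\circ(1), e_\delta(1), \tau_\delta(1))$ obtained as a weak limit must have independent components. This reduces the identification of the joint law to identifying the three marginals.

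Next, I would identify each marginal separately, observing that each is characterized by a property that does not refer to the extracting subsequence. The marginal of $\tau_\delta^\circ(1)$ is the weak limit of $\eps_k^{1/\alpha}\tau_{x^{(\eps_k)}_\delta(1)}$, which by Proposition~\ref{taulim} has the explicit Pareto density $\alpha \delta^\alpha x^{-\alpha-1}\,\d x \,\1_{[\delta,+\infty)}(x)$, regardless of the sequence. The marginal of $e_\delta(1)$ is the weak limit of $e^{(\eps_k)}_\delta(1)$, which by part~3 of Proposition~\ref{visittrap} is the $\mathrm{Exp}(1)$ distribution. Finally, Proposition~\ref{convenv} shows that the law of $\tau_\delta(1)$ is uniquely characterized by the identity $c\,\ov{\E}[(\ov{G}h)(\tau_\delta(1))] = \E[h(\tau)]$ valid for every test function $h$, a condition that neither mentions $(\eps_k)$ nor $\delta$. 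Hence each of the three marginals is independent of the choice of subsequence.

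Combining the two points, the joint law of $(\tau_\delta^\circ(1), e_\delta(1), \tau_\delta(1))$ along any valid subsequence is the product of three fixed marginals, so it does not depend on $(\eps_k)$. There is no substantive obstacle in this proof; the only thing to be careful about is that Propositions~\ref{taulim}, \ref{visittrap}, \ref{trioindep}, and \ref{convenv} are applied along the \emph{same} subsequence $(\eps_k)$ so that all the weak limits refer to the same coupling, which is guaranteed by the Skorokhod-type coupling implicit in the definition of the limiting variables in (\ref{bigjointlimit}).
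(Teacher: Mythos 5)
Your proof is correct and follows essentially the same route as the paper, which simply observes that Proposition~\ref{trioindep} gives the independence of the three components while Propositions~\ref{taulim}, \ref{visittrap} and \ref{convenv} each pin down a marginal by a characterization that makes no reference to the extracting subsequence. Your write-up just spells out in more detail what the paper states in one line.
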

\begin{proof}
It is a consequence of the above mentioned Propositions, together with Proposition~\ref{trioindep}.
\end{proof}
\noindent \textbf{Remark.} From this result, one could show that the random variables 
$$
\left(\eps^{1/\alpha} \tau_{x\de(1)} ,e\de(1), \tau\de(1) \right)
$$ 
jointly converge in law as $\eps$ goes to $0$.

We insist that, from now on, the law of $(\tau_\delta^\circ(1),e_\delta(1),\tau_\delta(1))$ may be considered without any mention of a particular sequence $(\eps_k)$. 
\begin{prop}
\label{cvHde}
For any $\delta > 0$, the law of $H\de$ under $\ov{\P}$ converges, for the $M_1$ topology and as as $\eps$ tends to $0$, to the law of a subordinator with Laplace exponent~:
\begin{equation}
\label{psideltaeq}
\psi_\delta(\lambda) = {c \delta^{-\alpha}} \ov{\E}[1 - e^{- \lambda e_\delta(1) \ov{G}(\tau_\delta(1)) \tau^\circ_\delta(1)}].
\end{equation}
\end{prop}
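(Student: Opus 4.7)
The plan is to combine the subsequential convergence result of Proposition~\ref{limitHform} with the uniqueness of the limiting joint distribution established in Proposition~\ref{trioconv}, and then to identify the Laplace exponent by recognising the limit as a compound Poisson process.

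First, I would argue convergence of $H\de$ itself (not merely along subsequences). Propositions~\ref{poisson}, \ref{taulim}, \ref{tightenv} and~\ref{visittrap} together ensure that the family of laws of
$$
\left(T\de(n), \eps^{1/\alpha} \tau_{x\de(n)}, \tau\de(n), e\de(n)\right)_{n \in \N}
$$
is tight under $\ov{\P}$ as $\eps \to 0$. For any subsequence $(\eps_k)$ extracted so that (\ref{convbigjoint}) holds, Proposition~\ref{limitHform} yields convergence of $H\dek$ to $H_\delta$ in the $M_1$ topology. Now Proposition~\ref{trioconv} asserts that the joint law of $(\tau_\delta^\circ(1), e_\delta(1), \tau_\delta(1))$ is the same for every such subsequence, and the analogous statement for $(T_\delta(n))$ follows from Proposition~\ref{poisson} (Poisson process of intensity $c\delta^{-\alpha}$). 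Combined with the fact that $H_\delta$ is a subordinator (by Proposition~\ref{indepincr}), this should be enough to conclude that the law of the limit process $H_\delta$ itself does not depend on the subsequence chosen, hence that $H\de$ converges along the full family $\eps \to 0$.

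Second, I would compute $\psi_\delta$. The representation (\ref{convH}) shows that $H_\delta$ is a pure-jump process whose jump times $(T_\delta(n))$ form a Poisson process of intensity $c\delta^{-\alpha}$, with jump sizes $J_n := e_\delta(n) \ov{G}(\tau_\delta(n)) \tau_\delta^\circ(n)$. Since $H_\delta$ is a subordinator, the compound Poisson structure gives
$$
\ov{\E}[e^{-\lambda H_\delta(t)}] = \exp\left(-t c\delta^{-\alpha}\, \ov{\E}\bigl[1-e^{-\lambda J_1}\bigr]\right),
$$
which is exactly (\ref{psideltaeq}). To justify this rigorously from the explicit sum (\ref{convH}), I would condition on the Poisson process of jump times and use that for a subordinator the increments are independent, so the jump sizes $J_n$ are automatically i.i.d.\ and independent of the jump times; alternatively, one can approach this directly by a Laplace transform computation along the lines of Campbell's formula applied to the point process $\sum_n \delta_{(T_\delta(n), J_n)}$, using that the marks $J_n$ are i.i.d.\ copies of $J_1$ by the exchangeability built into the construction and Proposition~\ref{trioconv}.

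I expect the main obstacle to be the bookkeeping around i.i.d.\ structure of the $(J_n)$ across $n$: Proposition~\ref{trioconv} as stated describes only the marginal law of $(\tau_\delta^\circ(1), e_\delta(1), \tau_\delta(1))$, so the cleanest way forward is probably to avoid arguing about joint independence of the $J_n$'s directly and instead rely on the fact that $H_\delta$ is a subordinator whose law is determined by the distribution of a \emph{single} jump and the intensity of the jump-time Poisson process. The Lévy--Khintchine representation for subordinators then immediately yields the formula for $\psi_\delta(\lambda)$, and uniqueness of $\psi_\delta$ upgrades the subsequential $M_1$ convergence to convergence of the full family.
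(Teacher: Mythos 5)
Your proposal is correct and follows essentially the same route as the paper: extract a subsequence along which (\ref{convbigjoint}) holds, invoke Propositions~\ref{limitHform} and~\ref{indepincr} to get a subordinator limit, identify $\psi_\delta$ from the law of the first jump alone (the paper does this by a first-order expansion in $t$ of $\ov{\E}[e^{-\lambda H_\delta(t)}]$, decomposing on whether a jump has occurred before time $t$ and using the independence of the first jump's height and instant — the same content as your Lévy--Khintchine/compound-Poisson identification), and then use Proposition~\ref{trioconv} to see that $\psi_\delta$, hence the limit law, is subsequence-independent. Your instinct to sidestep the joint i.i.d.\ structure of the $(J_n)_{n\ge 2}$ and rely only on the single-jump law plus the subordinator property is exactly what the paper does.
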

\begin{proof}
It is sufficient to show that, for any given sequence that converges to $0$, one can extract a further subsequence $(\eps_k)_{k \in \N}$ along which the law of $H\de$ converges to the law of a subordinator, whose Laplace exponent $\psi_\delta$ satisfies (\ref{psideltaeq}). 

Let us give ourselves a sequence that converges to $0$. Because the random variables in (\ref{bigjoint}) are tight, one can extract a further subsequence $(\eps_k)_{k \in \N}$ for which (\ref{convbigjoint}) holds.

Proposition~\ref{limitHform} states that, as $k$ goes to infinity, the law of the process $H\dek$ converges to the law of the process $H_\delta$ defined in (\ref{convH}). Moreover, we know from Proposition~\ref{indepincr} that $H_\delta$ is a subordinator. We can therefore define its Laplace exponent, say $\psi_\delta$, which satisfies, for any $\lambda, t \ge 0$~:
\begin{equation}
\label{eq:psi1}
\ov{\E}[e^{- \lambda H_\delta(t)}] = e^{-t \psi_\delta(\lambda)}.
\end{equation}

We recall that, because $H_\delta$ is a subordinator, the height and the instant of occurence of the first jump are independent random variables. Decomposing according to whether a first jump occurs or not (and using Proposition~\ref{poisson}), one can see that
\begin{equation}
\label{eq:psi2}
\ov{\E}[e^{- \lambda H_\delta(t)}] = 1-\frac{c t}{\delta^\alpha} + \frac{c t}{\delta^\alpha} \ov{\E}\left[\exp\left(- \lambda e_\delta(1) \ov{G}(\tau_\delta(1)) \tau^\circ_\delta(1) \right)\right] + O(t^2).
\end{equation}
According to (\ref{eq:psi1}), it is also equal to
$$
e^{-t \psi_\delta(\lambda)} = 1-t \psi_\delta(\lambda) + O(t^2),
$$
which, when compared with (\ref{eq:psi2}), proves the announced result.
\end{proof}

\noindent \textbf{Remark.} Similarly, one obtains that the law of $L\de$ under $\ov{\P}$ converges, as $\eps$ tends to $0$, to a subordinator with Laplace exponent 
$$
c \delta^{-\alpha} \ov{\E}[ 1 - e^{- \lambda e_\delta(1) (\ov{G}h)(\tau_\delta(1))}].
$$

From now on, the law of the process $H_\delta$ is well defined, independently of any particular sequence $(\eps_k)$~: it is the law of a subordinator whose Laplace exponent is $\psi_\delta$.

\begin{prop}
\label{p:diagram}
Possibly enlarging the probability space, there exists a process $H$ such that the following diagram holds~:
\begin{displaymath}
\label{diagram}
\begin{array}{cccc}
H\de & \xrightarrow[\eps \to 0]{} &  H_\delta & \\
\downarrow &  & \downarrow & (\delta \to 0) \\
H^{(\eps)} & \xrightarrow[\eps \to 0]{} &  H, &
\end{array}
\end{displaymath}
where arrows represent convergence in distribution under $\ov{\P}$ for the $M_1$ topology. 
\end{prop}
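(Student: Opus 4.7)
The plan is to identify the limit $H$ via the right vertical arrow, and then to establish the bottom arrow by a standard interchange-of-limits argument; the top arrow is already Proposition~\ref{cvHde}.

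\emph{Step 1: the right vertical limit.} Using independence of $(e_\delta(1), \tau_\delta(1), \tau^\circ_\delta(1))$ and their explicit distributions given by Proposition~\ref{trioconv}, I would first recast \eqref{psideltaeq} as a triple integral in $(v,x,\tau)$, and then perform the change of variables $u = v x \ov{G}(\tau)$ in the integral over $x$, obtaining
\begin{equation*}
\psi_\delta(\lambda) = \alpha \int \ov{G}(\tau)^{\alpha - 1} \ \d\P(\tau) \int_0^{+\infty} v^\alpha e^{-v} \ \d v \int_{\delta v \ov{G}(\tau)}^{+\infty} \frac{1 - e^{-\lambda u}}{u^{\alpha + 1}} \ \d u.
\end{equation*}
Sending $\delta \to 0$ by dominated convergence --- integrability near $u = 0$ follows from $1 - e^{-\lambda u} \le \lambda u$ and $\alpha < 1$, while uniformity in $\tau$ is ensured by the uniform bound on $\ov{G}$ from Proposition~\ref{compG} --- yields $\psi_\delta(\lambda) \to \psi(\lambda)$ as written in \eqref{psilim}. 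Since each $H_\delta$ is a subordinator (Proposition~\ref{indepincr}), this pointwise convergence of Laplace exponents translates to convergence of finite-dimensional distributions; and, the processes being non-decreasing and the candidate limit $H$ having almost surely a countable set of jumps, the convergence upgrades to the $M_1$ topology. This defines $H$ as an $\alpha$-stable subordinator with Laplace exponent $\psi$, and supplies the right column of the diagram.

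\emph{Step 2: uniform control of $H\e - \Hde$.} The process $\Delta\de(t) := H\e(t) - \Hde(t)$ is non-decreasing in $t$, and by stationarity of the environment viewed by the particle (Proposition~\ref{ergodicity}),
\begin{equation*}
\ov{\E}\bigl[\Delta\de(t)\bigr] = \eps^{1/\alpha - 1} \, t \ \E\bigl[\tau_0 \1_{\{\tau_0 < \delta \eps^{-1/\alpha}\}}\bigr].
\end{equation*}
The tail assumption \eqref{regvar} combined with $\alpha < 1$ yields $\E[\tau_0 \1_{\{\tau_0 < a\}}] \le C a^{1-\alpha}$ for $a \ge 1$; inserting $a = \delta \eps^{-1/\alpha}$, the $\eps$-factors cancel exactly and $\ov{\E}[\Delta\de(t)] \le C t \delta^{1 - \alpha}$, uniformly in $\eps$. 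Monotonicity of $\Delta\de$ together with Markov's inequality then give
\begin{equation*}
\sup_{\eps > 0} \ov{\P}\Bigl[ \sup_{s \le t} \Delta\de(s) > \eta \Bigr] \le \frac{C t \delta^{1 - \alpha}}{\eta} \xrightarrow[\delta \to 0]{} 0
\end{equation*}
for every $\eta, t > 0$. As the $M_1$ distance on $D([0,t], \R)$ is dominated by the uniform distance, the same estimate controls $d_{M_1}(H\e, \Hde)$.

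\emph{Step 3: interchange of limits.} Combining Step~2 with the top row $\Hde \Rightarrow H_\delta$ (Proposition~\ref{cvHde}) and Step~1 ($H_\delta \Rightarrow H$ as $\delta \to 0$), the standard triangle-inequality criterion \cite[Theorem~3.2]{bill} produces $H\e \Rightarrow H$ in the $M_1$ topology, which is the bottom row and closes the diagram. The main obstacle is Step~2, specifically verifying the exact cancellation of the $\eps$-powers and lifting the pointwise-in-$t$ expectation bound to a uniform-in-$\eps$ $M_1$ statement; this is also precisely where restricting to the weaker $M_1$ topology becomes essential, since the continuous paths of $H\e$ cannot approach the pure-jump limit $H$ in any stronger Skorokhod sense.
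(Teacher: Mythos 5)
Your proposal is correct, but it organizes the same ingredients in the opposite logical order from the paper. Your Step~2 is exactly the paper's key estimate $\ov{\E}[\sup_{[0,t]}|H\e - H\de|] \le \ov{c}\, t\, \delta^{1-\alpha}$ (inequality~(\ref{convunifHde})), derived in the same way from stationarity of the environment viewed by the particle and the tail bound~(\ref{regvar}); and both arguments ultimately rest on Billingsley's interchange-of-limits theorem. The difference is in how the right vertical arrow is obtained. You establish it \emph{first}, by explicitly computing $\psi_\delta(\lambda) \to \psi(\lambda)$ via the change of variables $u = v x \ov{G}(\tau)$ --- which is precisely the computation the paper defers to Proposition~\ref{cvH} --- and then a single application of the interchange theorem yields the bottom arrow. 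The paper instead proceeds ``softly'': it uses the uniform bound together with Lemma~\ref{tightness} to prove $M_1$-tightness of $(H\e)$ directly, extracts a subsequential limit $H$, and then invokes the interchange theorem to conclude that the law of $H_\delta$ must converge to that of $H$ as $\delta \to 0$ --- so the existence of the $\delta \to 0$ limit is \emph{deduced} rather than computed, and the identification of $H$ as an $\alpha$-stable subordinator is postponed. Your route front-loads the computation but makes the diagram immediate once $\psi_\delta \to \psi$ is known; the paper's route keeps Proposition~\ref{p:diagram} independent of the explicit form of $\psi$. One small point of care in your Step~1: what is needed to pass from convergence of Laplace exponents to $M_1$ convergence of the monotone processes $H_\delta$ is not countability of the jump set but the absence of fixed discontinuity times for the limit (stochastic continuity of the subordinator $H$), together with the endpoint control in Lemma~\ref{tightness}; both follow from the finite-dimensional convergence you establish, so the argument stands.
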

Before proving the Proposition, let us define the space $D_{\uparrow}([0,t], \R)$ of cadlag increasing processes from $[0,t]$ to $\R$ and with value $0$ at $0$. We recall a characterization of tightness of probability measures on $D_{\uparrow}([0,t], \R)$ \cite[Theorem 12.12.3]{whitt}.
\begin{lem}
\label{tightness}
Let $(h_n)_{n \in \N}$ be random (with respect to the measure $\ov{\P}$) elements of $D_{\uparrow}([0,t], \R)$. The family of distributions of $h_n$ is tight for the $M_1$ topology if and only if the following three properties hold~:
\begin{equation}
\label{tightness1}
\forall \eta > 0 \ \exists C > 0 \ \forall n \ : \ \ov{\P}[h_n(t) \ge C] \le \eta,
\end{equation}
\begin{equation}
\label{tightness2}
\forall \eta, \eta' > 0 \ \exists \iota > 0 \ \forall n \ : \  \ov{\P}[h_n(\iota) \ge \eta'] \le \eta,
\end{equation}
\begin{equation}
\label{tightness3}
\forall \eta, \eta' > 0 \ \exists \iota > 0 \ \forall n \ : \ \ \ov{\P}[h_n(t) - h_n(t-\iota) \ge \eta'] \le \eta .
\end{equation}
\end{lem}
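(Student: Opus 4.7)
The plan is to deduce this from the general $M_1$-tightness criterion in $D([0,t],\R)$, which states that a family of random cadlag paths is tight if and only if $\sup_{s \in [0,t]} |h_n(s)|$ is tight and the $M_1$ oscillation modulus $w(h_n, \iota)$ tends to zero in probability as $\iota \to 0$, uniformly in $n$. The proof then reduces to specializing this criterion to paths that are monotone non-decreasing and start at $0$.

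For the sup condition, monotonicity together with $h_n(0) = 0$ gives $\sup_{s \in [0,t]} |h_n(s)| = h_n(t)$, so sup-tightness becomes exactly (\ref{tightness1}). For the oscillation modulus, recall that its interior part at a point $s$ measures the supremum, over triples $s_1 \le s_2 \le s_3$ in $[(s-\iota)\vee 0, (s+\iota)\wedge t]$, of the distance of $h_n(s_2)$ from the segment joining $h_n(s_1)$ and $h_n(s_3)$. For an increasing function we always have $h_n(s_1) \le h_n(s_2) \le h_n(s_3)$, so $h_n(s_2)$ lies on that segment and the interior oscillation is identically zero. The only remaining contributions are the boundary oscillations at $0$ and $t$, which for a monotone path are bounded respectively by $h_n(\iota) - h_n(0) = h_n(\iota)$ and $h_n(t) - h_n(t-\iota)$. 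Requiring that these be uniformly small in probability is precisely what (\ref{tightness2}) and (\ref{tightness3}) express.

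The one technical hurdle is keeping track of the precise definition of the $M_1$ oscillation function near the endpoints of $[0,t]$ and checking that, for monotone paths, these boundary terms do reduce to the simple increments above. All of this is carried out in detail in \cite{whitt}, and Lemma~\ref{tightness} is essentially \cite[Theorem~12.12.3]{whitt} read in the special case of increasing processes with value $0$ at $0$; no further probabilistic ingredient is needed beyond this identification.
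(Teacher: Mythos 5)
Your proposal is correct and follows the same route as the paper, which likewise disposes of the lemma by observing that it is a rewriting of \cite[Theorem 12.12.3]{whitt} specialized to increasing processes vanishing at $0$. Your additional explanation of why the interior $M_1$ oscillation vanishes for monotone paths and why the endpoint conditions reduce to the increments in (\ref{tightness2}) and (\ref{tightness3}) is accurate and simply makes explicit what the paper leaves to the reader.
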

\begin{proof}
It is a simple rewriting of \cite[Theorem 12.12.3]{whitt}, using the fact that we restrict here our attention to increasing processes with value $0$ at $0$. 
\end{proof}
\begin{proof}[Proof of Proposition~\ref{p:diagram}]
We begin by showing that there exists $\ov{c} > 0$ such that, for any $\eps, \delta > 0$~:
\begin{equation}
\label{convunifHde}
\ov{\E}\left[\sup_{[0,t]} |H\de - H^{(\eps)}|\right] \le \ov{c} t \delta^{1-\alpha}.
\end{equation}

Observe that
\begin{equation}
\label{convunif1}
\ov{\E}\left[\sup_{[0,t]} |H^{(\eps)} - H\de|\right] = \eps^{1/\alpha} \int_0^{\eps^{-1}t} \ov{\E}\left[  \tau_{\hX_s} \1_{\{ \eps^{1/\alpha} \tau_{\hX_s} < \delta \}}  \right] \ \d s.
\end{equation}
The expectation in the integral is in fact independent of $s$, due to the stationarity of the environment viewed by the particle under $\ov{\E}$. Using Fubini's theorem, we can bound it the following way~:
\begin{eqnarray*}
\E[\tau_{0} \1_{\{ \eps^{1/\alpha} \tau_{0} < \delta \}}] & = & \int_{x=0}^{\eps^{-1/\alpha} \delta} \int_{y=0}^x \d y \ \d \mu_0(x) \\
& \le & \int_{y=0}^{ \eps^{-1/\alpha} \delta} \mu_0([y,+\infty)) \ \d y.
\end{eqnarray*}
Using our hypothesis (\ref{regvar}) concerning the tail behaviour of $\mu_0$, there exists $C > 0$ such that for any $x > 0$, one has
$$
\mu_0([x,+\infty)) \le \frac{C}{x^\alpha}.
$$
Integrating this estimate, and then coming back to (\ref{convunif1}), we obtain inequality (\ref{convunifHde}).

We can now show that the family of distributions of $H^{(\eps)}$ is tight for the $M_1$ topology, using Lemma~\ref{tightness}. Let us begin by checking condition~(\ref{tightness1}). We fix some $\delta > 0$, and observe that, for any $C > 0$~:
\begin{equation}
\label{Hetight}
\ov{\P}[H\e(t) \ge 2C] \le \ov{\P}[H\de(t) \ge C] + \ov{\P}[H\e(t) - H\de(t) \ge C].
\end{equation}
Let us now give ourselves $\eta > 0$. As the law of $H\de$ converges as $\eps$ tends to $0$, Lemma~\ref{tightness} ensures that, for a large enough $C$, one has, for any $\eps > 0$~:
$$
\ov{\P}[H\de(t) \ge C] \le \eta.
$$
The second term of the sum in (\ref{Hetight}) is bounded by $\ov{c} t \delta^{1-\alpha}/C$. Possibly enlarging $C$, this term can be made smaller than $\eta$ as well, and condition~(\ref{tightness1}) is thus proved. Conditions~(\ref{tightness2}) and~(\ref{tightness3}) are obtained the same way.

We now show that there is in fact a unique possible limit law for $H^{(\eps)}$. Let $(\eps_k)_{k \in \N}$ be a sequence decreasing to $0$ and such that the law of $H^{(\eps_k)}$ converges to the law of some process $H$. First, one can easily check that the $M_1$ distance \cite[(3.3.4)]{whitt} is dominated by the supremum distance. Inequality (\ref{convunifHde}) thus guarantees that the convergence of $H\dek$ towards $H^{(\eps_k)}$ is uniform in $k$, and one can intervert limits \cite[Theorem~4.2]{bill}~: the law of $H$ is also the limit of the law of $H_\delta$ as $\delta$ tends to $0$. In particular, the law of $H$ does not depend on the sequence $(\eps_k)$. 

As we verified that $H\e$ is tight and has a unique possible limit law, and also that the diagram (\ref{diagram}) holds, the proposition is proved.
\end{proof}

\begin{prop}
\label{cvH}
The law of $H$ is that of an $\alpha$-stable subordinator, whose Laplace exponent is given by~:
\begin{equation}
\label{defpsi}
\psi(\lambda) = \Gamma(\alpha + 1) \E\left[ \ov{G}(\tau)^{\alpha - 1} \right] \int_{0}^{+ \infty} (1 - e^{-\lambda u}) \frac{\alpha}{u^{\alpha + 1}}  \ \d u,
\end{equation}
where $\Gamma$ is Euler's Gamma function.
\end{prop}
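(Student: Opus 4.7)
My plan is to obtain $\psi$ as the pointwise limit of the Laplace exponents $\psi_\delta$ supplied by Proposition~\ref{cvHde}, and then verify that this limit coincides with the right-hand side of (\ref{defpsi}). From Proposition~\ref{p:diagram} we already have $H_\delta \to H$ in distribution for the $M_1$ topology as $\delta \to 0$; since any fixed $t>0$ is almost surely a continuity point of the limit (the jumps form a countable set), $H_\delta(t) \to H(t)$ in distribution, and bounded convergence applied to $e^{-\lambda H_\delta(t)} \in [0,1]$ yields
$$\psi(\lambda) := \lim_{\delta \to 0}\psi_\delta(\lambda) = -\tfrac{1}{t}\log \ov{\E}[e^{-\lambda H(t)}]$$
for every $\lambda, t > 0$. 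That $H$ inherits non-negative, stationary, independent increments from the $H_\delta$'s (via finite-dimensional convergence at continuity points) then ensures it is a subordinator with exponent $\psi$.

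The computation of $\psi_\delta(\lambda)$ is explicit once one invokes the independence of the three factors appearing in (\ref{psideltaeq}), granted by Proposition~\ref{trioconv}. Integrating out $e_\delta(1)\sim\mathrm{Exp}(1)$ first produces, with $u := \ov{G}(\tau_\delta(1))\,\tau_\delta^\circ(1)$,
$$\ov{\E}\bigl[1 - e^{-\lambda e_\delta(1) u}\bigr] = \frac{\lambda u}{1+\lambda u}.$$
Substituting the density of $\tau_\delta^\circ(1)$ (Proposition~\ref{taulim}) and of $\tau_\delta(1)$ (Proposition~\ref{lawtaudelta}), and performing the change of variable $u = \ov{G}(\tau)\,x$ in the $\tau_\delta^\circ$-integral, the prefactors $c\delta^{-\alpha}$ and $\alpha\delta^\alpha$ cancel and what remains is
$$\psi_\delta(\lambda) = \alpha\,\E\left[\ov{G}(\tau)^{\alpha-1}\int_{\delta \ov{G}(\tau)}^{\infty} \frac{\lambda\,du}{u^\alpha(1+\lambda u)}\right].$$

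Sending $\delta$ to $0$, the inner integral increases monotonically to $\int_0^\infty \lambda u^{-\alpha}(1+\lambda u)^{-1}\,du$, which via the substitution $v=\lambda u$ and the classical Beta-function identity $\int_0^\infty v^{-\alpha}(1+v)^{-1}\,dv = \Gamma(\alpha)\Gamma(1-\alpha)$ equals $\lambda^\alpha\,\Gamma(\alpha)\Gamma(1-\alpha)$. Monotone convergence applied outside the bracket then yields
$$\psi(\lambda) = \alpha\,\Gamma(\alpha)\Gamma(1-\alpha)\,\E\bigl[\ov{G}(\tau)^{\alpha-1}\bigr]\,\lambda^\alpha = \Gamma(\alpha+1)\Gamma(1-\alpha)\,\E\bigl[\ov{G}(\tau)^{\alpha-1}\bigr]\,\lambda^\alpha,$$
and this agrees with (\ref{defpsi}) upon inserting the standard identity $\Gamma(1-\alpha)\lambda^\alpha = \int_0^\infty (1-e^{-\lambda u})\alpha u^{-\alpha-1}\,du$ (an integration by parts). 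As $\psi$ is a positive multiple of $\lambda^\alpha$, the resulting subordinator is $\alpha$-stable.

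The main delicate point is the finiteness of $\E[\ov{G}(\tau)^{\alpha-1}]$, without which the monotone-convergence step would merely deliver the trivial identity $+\infty=+\infty$. I would handle this by Hölder's inequality: since $\alpha-1 \in (-1,0)$, interpolating between $\ov{G}^{-1}$ and $\ov{G}^{0} = 1$ with exponents $1-\alpha$ and $\alpha$ gives $\E[\ov{G}(\tau)^{\alpha-1}] \le \E[\ov{G}(\tau)^{-1}]^{1-\alpha}$, and the remark immediately following the proof of Proposition~\ref{convenv} identifies $\E[\ov{G}(\tau)^{-1}] = c$, which is finite (and strictly positive) by Proposition~\ref{range}.
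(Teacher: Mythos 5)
Your proposal is correct and follows the same overall strategy as the paper: identify $\psi$ as $\lim_{\delta\to0}\psi_\delta$ using Proposition~\ref{p:diagram}, evaluate $\psi_\delta$ from (\ref{psideltaeq}) via the independence and explicit laws in Proposition~\ref{trioconv}, and pass to the limit. The computation itself is organized differently: the paper introduces the law $\nu$ of $e_\delta(1)\ov{G}(\tau_\delta(1))$, changes variables $u=xv$ to land directly on the L\'evy-measure form $\int(1-e^{-\lambda u})\alpha u^{-\alpha-1}\bigl(\int_0^{u/\delta}v^\alpha\,\d\nu\bigr)\d u$, and concludes by monotone convergence; you instead integrate out the exponential first to get $\lambda u/(1+\lambda u)$, reduce to the Beta integral $\int_0^\infty v^{-\alpha}(1+v)^{-1}\d v=\Gamma(\alpha)\Gamma(1-\alpha)$, and reconcile with (\ref{defpsi}) via $\Gamma(1-\alpha)\lambda^\alpha=\int_0^\infty(1-e^{-\lambda u})\alpha u^{-\alpha-1}\d u$. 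Both routes are valid and of comparable length; yours has the merit of making explicit the finiteness of $\E[\ov{G}(\tau)^{\alpha-1}]$ via Jensen/H\"older and the identity $c=\E[\ov{G}(\tau)^{-1}]$, a point the paper leaves implicit (and which is indeed needed for the monotone-convergence step to yield a finite $\psi$). One small imprecision: $M_1$ convergence gives $H_\delta(t)\to H(t)$ in distribution only for $t$ outside the countable set $T^\circ=\{t:\ov{\P}[H(t)\neq H(t^-)]>0\}$, not for every fixed $t$ as you assert; since $\psi_\delta(\lambda)$ is independent of $t$, the identity $\ov{\E}[e^{-\lambda H(t)}]=e^{-t\psi(\lambda)}$ first holds off $T^\circ$ and must then be extended to all $t$ by right-continuity of $H$, which is exactly how the paper closes the argument.
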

\begin{proof}
We begin by showing that the Laplace exponent $\psi_\delta(\lambda)$ of $H_\delta$ converges, for any $\lambda \ge 0$, to $\psi(\lambda)$ defined in (\ref{defpsi}). Let $\nu$ be the law of $e_\delta(1) \ov{G}(\tau_\delta(1))$. We recall from Proposition~\ref{trioconv} that the joint law of $(\tau_\delta^\circ(1),e_\delta(1),\tau_\delta(1))$ is known. As a consequence, one can check that the measure $\nu$ does not depend on $\delta$. From Proposition~\ref{cvHde}, we obtain that
$$
\psi_\delta(\lambda) = c \delta^{-\alpha} \int_{x \ge \delta} (1-e^{- \lambda x v}) \delta^\alpha  \frac{\alpha}{x^{\alpha + 1}} \ \d x \d \nu(v)  .
$$
The terms $\delta^\alpha$ cancel out, and the change of variables $u = x v$ leads to
\begin{equation}
\label{psidlim}
\psi_\delta(\lambda) = c \int_{u = 0}^{+ \infty} (1 - e^{-\lambda u}) \frac{\alpha}{u^{\alpha + 1}} \int_{v = 0}^{u/\delta} v^\alpha \ \d \nu(v) \ \d u.
\end{equation}
Moreover, one has that
$$
\int_{0}^{u/\delta} v^\alpha \ \d \nu(v) \xrightarrow[\delta \to 0]{} \int_{0}^{+\infty} v^\alpha \ \d \nu(v),
$$
and, using the description of $\nu$ provided by Proposition~\ref{trioconv}~:
$$
\int_{0}^{+\infty} v^\alpha \ \d \nu(v) = c^{-1} \ \E[\ov{G}(\tau)^{\alpha - 1}] \int_0^{+\infty} x^\alpha e^{-x} \ \d x,
$$
the last integral being equal to $\Gamma(\alpha + 1)$. From equation~(\ref{psidlim}) and using monotone convergence theorem, we obtain that $\psi_\delta(\lambda)$ converges to $\psi(\lambda)$ as $\delta$ tends to $0$.

It remains to check that $H$ is a subordinator, and that $\psi$ is its Laplace exponent. Some caution is necessary due to the fact that convergence for Skorokhod's $M_1$ topology does not imply convergence of all finite dimensional distributions in general. However, it is clear from the argument at the beginning of the proof of Proposition~\ref{limitHform} that convergence of finite-dimensional distributions holds whenever the times considered do not belong to the set 
$$
T^\circ = \{ t \in \R_+ : \ov{\P}[H(t) \neq H(t^-)] > 0 \}.
$$
This set is countable, as \cite[Section 15]{bill} shows. Hence, for any $\lambda_1,\ldots,\lambda_n \ge 0$, and any $t_1 \le \cdots \le t_n$ outside $T^\circ$, one has 
\begin{equation*}
\begin{split}
& \ov{\E}[e^{- \lambda_1 H(t_1) - \lambda_2 (H(t_2)-H(t_1)) - \cdots - \lambda_n (H(t_n)-H(t_{n-1}))}] \\
& \qquad = \lim_{\delta \to 0} \ov{\E}[e^{- \lambda_1 H_\delta(t_1) - \lambda_2 (H_\delta(t_2)-H_\delta(t_1)) - \cdots - \lambda_n (H_\delta(t_n)-H_\delta(t_{n-1}))}] \\
& \qquad = e^{-t_1 \psi(\lambda_1) - (t_2-t_1) \psi(\lambda_2) - \cdots - (t_n - t_{n-1}) \psi(\lambda_n)}.
\end{split}
\end{equation*}
Finally, right continuity of the process $H$ ensures that the above equality holds in fact for every $t_1,\ldots,t_n$, thus finishing the proof.
\end{proof}
%
%
%
%
%
%
\section{Joint convergence}
\label{s:joint}
\setcounter{equation}{0}
In this section we will identify the limit of the joint distribution of $(\hX\e, H^{(\eps)})$ under the annealed measure $\ov{\P}$. 

The first step is to describe the limit law of $\hX\e$. We state it directly in its quenched form, although in this section, the annealed version would be sufficient.

\begin{prop}
\label{tclquenched}
For almost every $\tau$, the law of $\hX\e$ under $\PPt_0$ converges, for the $J_1$ topology and as $\eps$ tends to $0$, to the law of a non-degenerate Brownian motion~$B$. 
\end{prop}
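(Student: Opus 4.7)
The plan is to reduce this statement to the quenched functional central limit theorem for random walks among random conductances established in \cite{bardeu}. Indeed, with jump rates $(\tau_x \tau_y)^a$ between neighbours, the process $\hX$ is a continuous-time Markov chain reversible for the counting measure; equivalently, it is a random walk in the conductance environment $c_{xy} = (\tau_x \tau_y)^a$. The conductances $(c_{xy})$ are stationary and ergodic under $\Z^d$-translations (they are not independent, since conductances of adjacent edges share a vertex and hence a common $\tau$-factor, but this locality is harmless). Moreover, since $\tau_x \ge 1$ for every $x$, one has the uniform lower bound $c_{xy} \ge 1$, while the upper tail is governed by~(\ref{regvar}). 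I would check that these conditions place the model within the scope of the main result of \cite{bardeu}, whose conclusion is precisely the announced quenched $J_1$-convergence of $\hX\e$ to a Brownian motion with some deterministic covariance matrix $\Sigma$.

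The conceptual skeleton behind \cite{bardeu} is the classical Kipnis-Varadhan/corrector approach. One writes $\hX_t = M_t + \chi(\htau(t)) - \chi(\tau)$, where $\chi$ is the corrector associated to the coordinate function on $\Omega$ and $M$ is a martingale in the filtration of the environment viewed by the particle. The ergodicity of $\htau$ provided by Proposition~\ref{ergodicity} yields a martingale CLT for $M_t/\sqrt{\eps^{-1}t}$, and the remaining task is $\PPt_0$-almost sure sublinearity of $\chi$ along the trajectory. This sublinearity is the technically demanding step, and is the reason the quenched invariance principle is non-trivial in our setting: because the conductances are unbounded above, one cannot rely on the classical bounded-conductance arguments, and one needs Gaussian-type on-diagonal and off-diagonal heat kernel estimates for $\hX$. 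Establishing those estimates is the principal achievement of \cite{bardeu}, and is the main obstacle that this proof outsources.

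It only remains to argue non-degeneracy of $\Sigma$. By the variational characterization of the limit covariance (effective conductivity), $\Sigma$ is monotone non-decreasing in the individual conductances. Since $c_{xy} \ge 1$ uniformly, $\Sigma$ dominates the covariance of the simple random walk, which is $(2d)^{-1} I_d$; hence $\Sigma$ is strictly positive definite and the limit Brownian motion is non-degenerate. This completes the plan.
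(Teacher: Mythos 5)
Your proposal is correct and takes the same route as the paper, which simply invokes \cite[Theorem 1.1]{bardeu} for this statement. The extra detail you provide (the corrector/Kipnis--Varadhan skeleton and the monotonicity argument for non-degeneracy of $\Sigma$ via $c_{xy} \ge 1$) is accurate context but not something the paper spells out.
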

\begin{proof}
We refer to \cite[Theorem 1.1]{bardeu} for a proof of this fact.
\end{proof}

\begin{prop}
\label{cvjointannealed}
The law of $(\hX\e,H\e)$ under $\ov{\P}$ converges, for the $J_1 \times M_1$ topology and as $\eps$ tends to $0$, to the law of two independent processes $(B,H)$, where $B$ and $H$ are the processes appearing respectively in Propositions~\ref{tclquenched} and~\ref{cvH}.
\end{prop}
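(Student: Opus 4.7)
My plan is to first establish joint tightness, then identify any subsequential limit by adapting the independent-increments argument of Proposition~\ref{indepincr}, and finally conclude by invoking the structure of two-dimensional L\'evy processes.

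\textbf{Tightness.} By Proposition~\ref{tclquenched} the quenched laws of $\hX\e$ converge weakly to the law of a Brownian motion for $\P$-a.e.\ $\tau$; integrating, the annealed laws also converge, so the family $(\hX\e)_{\eps > 0}$ is tight for the $J_1$ topology. By Proposition~\ref{cvH} the family $(H\e)_{\eps > 0}$ is tight for the $M_1$ topology. Tightness of the joint family for the product topology follows.

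\textbf{Independent increments of the joint limit.} Let $(\eps_k)$ be a subsequence along which $(\hX\ek, H\ek)$ converges in law under $\ov{\P}$ to some process $(B', H')$. The marginals are already identified by Propositions~\ref{tclquenched} and~\ref{cvH}: $B'$ is a Brownian motion and $H'$ an $\alpha$-stable subordinator. To conclude that $(B', H')$ has \emph{jointly} independent increments, I would rerun, almost word for word, the induction in the proof of Proposition~\ref{indepincr}. Fix $0 \le s_1 < \cdots < s_{2n+2}$ and bounded continuous test functions $\phi_1, \ldots, \phi_{n+1}$ of the joint increments $(\hX\e(s_{2i}) - \hX\e(s_{2i-1}), H\e(s_{2i}) - H\e(s_{2i-1}))$. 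Conditioning by the Markov property at time $\eps_k^{-1} s_{2n}$ writes the expectation of the product as $\ov{\E}[P_k F_k(\htau(\eps_k^{-1} s_{2n}))]$, where $F_k(\tau) = \EEt_0[g_k]$ and $g_k$ is a bounded functional of the trajectory on $[0, \eps_k^{-1}(s_{2n+2}-s_{2n+1})]$ depending only on the \emph{increments} of $\hX$ (and on the values of $\tau$ visited). Working with increments rather than absolute positions is precisely what makes $g_k$ translation invariant in the sense of Theorem~\ref{gtrajtau}, so that theorem applies and gives $\var(F_k) \le C \eps_k^{d/2-2}$. Cauchy--Schwarz and the induction hypothesis then close the step exactly as in Proposition~\ref{indepincr}, showing that $(B', H')$ is a two-dimensional L\'evy process.

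\textbf{From independent increments to independence of $B$ and $H$.} A two-dimensional L\'evy process $(B', H')$ whose first component is a continuous Brownian motion and whose second component is a pure-jump subordinator must have independent components. Indeed, by the L\'evy--It\^o decomposition, the continuity of $B'$ forces the L\'evy measure of $(B', H')$ to be supported on $\{0\} \times \R$, while the absence of a Gaussian part in $H'$ forces the Gaussian covariance matrix of $(B', H')$ to annihilate the second coordinate, and by positive semi-definiteness the mixed entry must then vanish. The L\'evy--Khintchine exponent of $(B', H')$ therefore splits as the sum of the marginal exponents, which is equivalent to the independence of $B'$ and $H'$. This pins down the joint limit uniquely; combined with tightness, the full convergence under $\ov{\P}$ follows.

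I expect the main obstacle to be the second paragraph: one must check carefully that the functional produced by the Markov step can be written in a translation invariant form so that Theorem~\ref{gtrajtau} is available, which is why I would work with increments of $\hX$ past time $\eps_k^{-1} s_{2n}$ rather than with its absolute position.
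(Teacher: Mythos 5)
Your proposal is correct and follows essentially the same route as the paper: tightness from the two marginal convergences, the L\'evy property of any subsequential limit by rerunning the mixing argument of Proposition~\ref{indepincr} with Fourier in place of Laplace transforms, and independence of the components from the L\'evy--It\^o structure (the paper invokes the uniqueness of the decomposition of a L\'evy process into independent continuous and pure-jump parts, which is the same fact you derive from the degenerate Gaussian covariance matrix and the support of the L\'evy measure). The one point you single out --- expressing the conditioned functional through increments of $\hX$ so that it is translation invariant and Theorem~\ref{gtrajtau} applies --- is precisely the detail the paper leaves implicit when it says to ``follow the proof of Proposition~\ref{indepincr}''.
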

\begin{proof}
We write $Z\e$ for $(\hX\e, H\e)$. Propositions~\ref{tclquenched} and \ref{cvH} ensure the convergence in distribution of the two marginals of $Z\e$. In particular, the law of $Z\e$ is tight. Let $(\eps_k)$ be a sequence such that the law of $Z\ek$ under $\ov{\P}$ converges, and let us write $Z = (B,H)$ for the limit. The distributions of $B$ and of $H$ are known, and what we need to show is that these random variables are independent.

First, it is clear that from the convergence in the product $J_1 \times M_1$ topology, one can deduce the convergence of the finite dimensional distributions of $Z\e$, following the argument given at the beginning of the proof of Proposition~\ref{limitHform}. Then, one can follow the proof of Proposition~\ref{indepincr}, replacing Laplace transform by Fourier transform for definiteness, and obtain that the limit $Z$ is a L\'evy process. 

It follows from the L\'evy-Khintchine decomposition that the L\'evy process $Z$ can be decomposed into $Z^{(1)} + Z^{(2)}$, where $Z^{(1)}$ is a continuous process, $Z^{(2)}$ is pure jump, and $Z^{(1)},Z^{(2)}$ are independent \cite[Section I.1]{bertoin}. The decomposition into the sum of a continuous process and a pure jump one being unique, it follows that $Z^{(1)} = (B,0)$ and $Z^{(2)} = (0,H)$, which proves the proposition.
\end{proof}
%
%
%
%
%
%
\section{From annealed to quenched}
\label{s:quenched}
\setcounter{equation}{0}
From the knowledge of the convergence of $Z\e = (\hX\e,H\e)$ towards $Z = (B,H)$ under the annealed law $\P\PPt_0$, we would like to obtain convergence under $\PPt_0$ for almost every~$\tau$. This can be obtained by a kind of concentration argument that is due to \cite{bs}, and consists in checking that the variance of certain functionals of $Z\e$ decays sufficiently fast when $\eps$ tends to $0$ (a polynomial decay being sufficient). 

As a first step, we consider the joint law of increments of $Z$ on intervals that do not contain~$0$. In other words, for some $0 < t_0 \le \cdots \le t_{n}$, we consider the law of 
\begin{equation}
\label{incremout0}
\left( Z\e(t_1) - Z\e(t_0),\ldots, Z\e(t_{n}) - Z\e(t_{n-1}) \right).
\end{equation}
Using \cite[Lemma~4.1]{bs} together with Theorem~\ref{gtrajtau}, we will see that, for almost every environment, the law of increments of the form (\ref{incremout0}) under $\PPt_0$ converges to the law of the increments of $Z$.

This statement concerning the law of increments of the form (\ref{incremout0}) is weaker than the convergence of all finite-dimensional distributions, but is still sufficient if one can prove the tightness of $Z\e$. We can borrow the tightness of $\hX\e$ from Proposition~\ref{tclquenched}. In order to prove the tightness of $H\e$, we will in fact prove the convergence of its finite-dimensional distributions (which is a sufficient condition, see Lemma~\ref{tightness}). As we pointed out, it is not enough for this purpose to control the distributions of increments of $H\e$ on intervals that do not contain $0$, so we will need additional information concerning the behaviour of $H\e$ for small times. 

We start by giving this necessary control of $H\e$ for small times.

\begin{prop}
\label{Hpres0}
For any $\nu > 0$ and any $\gamma < \nu/\alpha$, the probability
$$
\ov{\P}[H\e(\eps^\nu) > \eps^\gamma]
$$
decays polynomially fast to $0$ as $\eps$ tends to $0$.
\end{prop}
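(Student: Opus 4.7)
The plan is to truncate $H\e(\eps^\nu)$ at a scale $\delta = \eps^\beta$ and handle the shallow and deep contributions separately, via the decomposition $H\e(\eps^\nu) = H\de(\eps^\nu) + (H\e - H\de)(\eps^\nu)$. Observe that $H\de(\eps^\nu) = 0$ unless the walk discovers some site $x$ with $\eps^{1/\alpha}\tau_x \ge \delta$ within microscopic time $\eps^{\nu-1}$, so
$$\ov{\P}[H\e(\eps^\nu) > \eps^\gamma] \le \ov{\P}[H\de(\eps^\nu) > 0] + \ov{\P}[(H\e - H\de)(\eps^\nu) > \eps^\gamma].$$

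For the shallow complement, the computation leading to~(\ref{convunifHde}) applied at time $\eps^\nu$ gives $\ov{\E}[(H\e - H\de)(\eps^\nu)] \le \ov{c}\, \eps^\nu \delta^{1-\alpha}$, and Markov's inequality then bounds the corresponding probability by $\ov{c}\, \eps^{\nu - \gamma + (1-\alpha)\beta}$, which decays polynomially as soon as $\beta > (\gamma-\nu)/(1-\alpha)$.

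For the deep piece, I would adapt the splitting used in the proof of Lemma~\ref{unifint}: for any deterministic $K > 0$,
$$\ov{\P}[H\de(\eps^\nu) > 0] \le \ov{\P}[r(\eps^{\nu-1}) > K] + \ov{\P}[\exists i \le K : \eps^{1/\alpha}\tau_{x_i} \ge \delta].$$
Since by Proposition~\ref{tauiid} the $\tau_{x_i}$ are i.i.d., a union bound and~(\ref{regvar}) show that the second term is at most $C K \eps \delta^{-\alpha}$, while Chebyshev's inequality applied to Proposition~\ref{discovered} shows that the first is at most $C(\eps^{\nu-1}+1)^2/K^2$.

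Setting $\delta = \eps^\beta$ and $K = \eps^{-\kappa}$, the remaining task is to pick $\beta, \kappa > 0$ making each of the resulting error exponents strictly positive. The main (essentially algebraic) obstacle is the compatibility of the lower bound $\beta > (\gamma-\nu)/(1-\alpha)$ coming from the shallow estimate with the upper bound $\alpha \beta < 1-\kappa$ coming from the deep estimate; after letting $\kappa$ tend to the smallest value allowed by the second-moment bound on $r$, this compatibility reduces exactly to $\alpha \gamma < \nu$, which is the hypothesis $\gamma < \nu/\alpha$.
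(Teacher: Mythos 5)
Your argument is correct and genuinely different from the one in the paper. The paper first rescales to reduce the claim to the polynomial decay of $\ov{\P}[H\e(1) > \eps^{-\beta}]$, bounds $H\e(1)$ by $\eps^{1/\alpha}\sum_{i \le r(\eps^{-1})} l_i \tau_{x_i}$, truncates both the number of discovered sites (via Proposition~\ref{discovered}) and the local times $l_i$ (via their uniform exponential tails, Proposition~\ref{compG}), and then invokes the Baum--Katz theorem \cite{BK} to get a polynomial rate in the law of large numbers for the remaining heavy-tailed i.i.d.\ sum $\sum \tau_{x_i}$. You instead truncate by depth at $\delta = \eps^\beta$: the shallow contribution is killed by the first-moment estimate~(\ref{convunifHde}) plus Markov, and the deep contribution is killed outright by showing that with high probability no deep trap is even discovered in the microscopic window, via Proposition~\ref{discovered} and a union bound over the i.i.d.\ depths of Proposition~\ref{tauiid}. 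This buys you a more self-contained proof: no external rate-of-convergence theorem, and no need to control the local times at all. The exponent bookkeeping you sketch is correct, and the compatibility of $\beta > (\gamma-\nu)/(1-\alpha)$ with $\alpha\beta < 1-\kappa$, $\kappa > 1-\nu$ does reduce to $\alpha\gamma < \nu$.

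One caveat: that last reduction is exact only for $\nu \le 1$ (for $\nu > 1$ the constraint $\kappa > 1-\nu$ is vacuous and your scheme only reaches $\gamma < \nu - 1 + 1/\alpha$, which is strictly smaller than $\nu/\alpha$). This is not a defect of your proof: for $\nu > 1$ the statement itself fails, since with probability tending to one the walk does not jump before time $\eps^{\nu-1}$, whence $H\e(\eps^\nu) \ge \eps^{1/\alpha + \nu - 1} > \eps^\gamma$ for any $\gamma \in (\nu - 1 + 1/\alpha, \nu/\alpha)$; the paper's own reduction to $H\e(1)$ likewise implicitly requires $\nu < 1$, and the proposition is only ever applied with $\nu < 1/5$. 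So your proof covers exactly the range in which the proposition is true and used.
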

\begin{proof}
It is in fact sufficient to show that, for any $\beta > 0$, the probability
\begin{equation}
\label{Hpres0equiv}
\ov{\P}[H\e(1) > \eps^{-\beta}]
\end{equation}
decays polynomially fast to $0$ as $\eps$ tends to $0$, as one can check using the fact that $H\e(\eps^\nu) = \eps^{\nu/\alpha} H^{(\eps^\nu)}(1)$.

Up to time $\eps^{-1}$, the random walk $\hX$ discovers $r(\eps^{-1})$ sites. Writing $l_i$ for the total time spent by the random walk on the $i^{\text{th}}$ discovered site
$$
l_i = \int_0^{+\infty} \1_{\{\hX_s = x_i\}} \ \d s,
$$
we can bound $H\e(1)$ by
$$
\eps^{1/\alpha} \sum_{i=1}^{r(\eps^{-1})} l_i \tau_{x_i},
$$
where $(x_i)$ is the exploration process defined in (\ref{defxn}). For any $N$, we thus have
$$
\ov{\P}[H\e(1) > \eps^{-\beta}] \le \ov{\P}\left[ \eps^{1/\alpha} \sum_{i=1}^{\eps^{-1} N} l_i \tau_{x_i} > \eps^{-\beta} \right] + \ov{\P}[ r(\eps^{-1}) > \eps^{-1} N ].
$$
Because of Proposition~\ref{discovered}, the second term is bounded by $C N^{-2}$, uniformly over $\eps$. In order to ensure polynomial decay, we choose $N$ as a small negative power of $\eps$, say $\eps^{-\gamma}$ for some $\gamma > 0$ to be fixed. With this choice of $N$, the first term becomes
$$
\ov{\P}\left[ \eps^{1/\alpha} \sum_{i=1}^{\eps^{-1-\gamma}} l_i \tau_{x_i} > \eps^{-\beta} \right]. 
$$
We choose another small parameter $\gamma'$, and decompose the above probability as
$$
\ov{\P}[ \exists i \le \eps^{-1-\gamma} :  l_i > \eps^{-\gamma'} ] + \ov{\P}\left[ \eps^{1/\alpha - \gamma'} \sum_{i=1}^{\eps^{-1-\gamma}} \tau_{x_i} > \eps^{-\beta}\right].
$$
The random variable $l_i$ is an exponential random variable, and moreover, its mean value, which is the Green function at $x_i$, is bounded by some constant as one can see from Proposition~\ref{compG}. Hence, the first term of the sum above is bounded by 
$$
\sum_{i=1}^{\eps^{-1-\gamma}} \ov{\P}[l_i > \eps^{-\gamma'}] \le \eps^{-1-\gamma} e^{-\eps^{-\gamma'}/C},
$$
which converges to $0$ faster than any polynomial. There remains to check that
$$
\ov{\P}\left[ \eps^{1/\alpha - \gamma'} \sum_{i=1}^{\eps^{-1-\gamma}} \tau_{x_i} > \eps^{-\beta} \right]
$$
converges polynomially fast to $0$. We know from Proposition~\ref{tauiid} that under $\ov{\P}$, the random variables $(\tau_{x_i})$ are independent and identically distributed according to $\mu_0$. Hence, because of the tail behaviour (\ref{regvar}), the sum of $\tau_{x_i}$ appearing above is of order $\eps^{-(1+\gamma)/\alpha}$, and a natural condition for this polynomial decay to hold seems to be that $\gamma'+\gamma/\alpha < \beta$. This condition is shown to be sufficient in \cite[Theorem~3]{BK} (note that there is a misprint in condition (d) of this theorem, where the sign $\Sigma$ should be replaced by the sign $E$).
\end{proof}

We will now proceed to prove that, for almost every $\tau$, the law of $H\e$ converges under $\PPt_0$, although our only true concern for now is that of tightness. 

As we said before, the argument of \cite[Lemma~4.1]{bs} requires the decay of the variance of certain functionals of $H\e$. Let $\lambda_1,\ldots,\lambda_n \ge 0$, and $0 < t_1 < \cdots < t_n$. For any increasing process $h$, we define $F(h)$ as
\begin{equation}
\label{Ftype}
F(h) = \exp\big( -\lambda_1 h(t_1) - \lambda_2 (h(t_2) - h(t_{1})) - \cdots - \lambda_n (h(t_n) - h(t_{n-1})) \big).
\end{equation}
\begin{prop}
\label{concentrpoly}
For $F$ defined by (\ref{Ftype}) and $d \ge 5$, the variance of $\EEt_0[F(H\e)]$ converges to $0$ polynomially fast as $\eps$ tends to $0$. 
\end{prop}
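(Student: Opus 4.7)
The plan is to follow closely the proof of Proposition~\ref{indepincr}: rewrite $\EEt_0[F(H\e)]$ as $f_t(\tau)$ for a suitable translation invariant functional $f$, and apply Theorem~\ref{gtrajtau}. The new difficulty is that $F(H\e)$ involves $H\e(t_1)$, which integrates $\tau_{\hX_s}$ all the way from $s = 0$, whereas Theorem~\ref{gtrajtau} requires a separation time $t$ between the starting point of the trajectory and the support of the functional: the quotient $(s+t)^2/t^{d/2}$ is only useful against the $\eps^{-2}$ coming from taking $s = \eps^{-1} t_n$ if $t$ is comparably large in $\eps^{-1}$.

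I would first fix a small parameter $\nu > 0$, set $t^* = \eps^\nu$ (taking $\eps$ small enough that $t^* < t_1$), and replace $F$ by
$$
\tilde F(H\e) = \exp\!\Bigl(-\lambda_1 (H\e(t_1) - H\e(t^*)) - \sum_{i=2}^n \lambda_i (H\e(t_i) - H\e(t_{i-1}))\Bigr).
$$
Now $\tilde F(H\e)$ only involves $\tau_{\hX_s}$ for $s \in [\eps^{-1} t^*, \eps^{-1} t_n]$, so exactly as in the proof of Proposition~\ref{indepincr} one writes $\EEt_0[\tilde F(H\e)] = f_{\eps^{-1} t^*}(\tau)$ for a bounded translation invariant $f(\tau) = \EEt_0[g]$, where $g$ depends on the trajectory on a time interval of length at most $\eps^{-1} t_n$. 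Theorem~\ref{gtrajtau} then bounds $\var(\EEt_0[\tilde F(H\e)])$ by a constant times $t_n^2\, \eps^{d/2 - 2 - \nu d/2}$, which is polynomially small in $\eps$ as soon as $\nu < 1 - 4/d$, a range that is non-empty for $d \ge 5$.

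The remaining step is to show that replacing $F$ by $\tilde F$ introduces only a polynomially small error. Since $F(H\e) = \tilde F(H\e)\, e^{-\lambda_1 H\e(t^*)}$, $\tilde F \le 1$, and $|1 - e^{-x}| \le \min(x,1)$, on the event $\{H\e(t^*) \le \eps^\gamma\}$ the difference $|F(H\e) - \tilde F(H\e)|$ is bounded by $\lambda_1 \eps^\gamma$, and by $1$ otherwise; hence, using $\PPt_0[A]^2 \le \PPt_0[A]$,
$$
\E\bigl[(\EEt_0[F(H\e)] - \EEt_0[\tilde F(H\e)])^2\bigr] \le 2 \lambda_1^2 \eps^{2\gamma} + 2\,\ov\PP[H\e(t^*) > \eps^\gamma].
$$
Proposition~\ref{Hpres0} says exactly that this last term decays polynomially whenever $\gamma < \nu/\alpha$. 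Any choice of $\nu \in (0, 1 - 4/d)$ and $\gamma \in (0, \nu/\alpha)$ then makes both contributions to $\var(\EEt_0[F(H\e)])$ decay polynomially.

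The hard part is precisely this interplay: Theorem~\ref{gtrajtau} on its own cannot absorb the contribution of $H\e$ near time $0$, and without the quantitative small-time control provided by Proposition~\ref{Hpres0} one could not afford to cut off an initial segment without spoiling the rate. Tuning the two exponents $\nu$ and $\gamma$ simultaneously against the constraints $\nu < 1 - 4/d$ and $\gamma < \nu/\alpha$ is where the restriction $d \ge 5$ is genuinely used.
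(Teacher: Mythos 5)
Your proposal is correct and follows essentially the same route as the paper: the paper also cuts off the initial segment $[0,\eps^{\nu}]$ (your $\tilde F$ is its $P\e$), applies Theorem~\ref{gtrajtau} to the shifted, translation-invariant functional to get the variance bound with exponent $(1-\nu)d/2-2$, and controls the discarded piece via Proposition~\ref{Hpres0}. The only cosmetic difference is in the bookkeeping for the error term, where the paper exploits the monotone inequality $0 \le P\e - F(H\e) \le 1-e^{-\lambda_1 H\e(\eps^\nu)}$ to compare first and second moments directly rather than splitting on the event $\{H\e(\eps^\nu)\le\eps^\gamma\}$; both arguments are valid.
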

\begin{proof}
Let $\nu \in (0,1)$. We define
$$
P\e = \exp\left( -\lambda_1 (H\e(t_1) - H\e(\eps^\nu))  - \cdots - \lambda_n (H\e(t_n) - H\e(t_{n-1})) \right),
$$
which enables us to decompose $F(H\e)$ as
\begin{equation}
\label{FHdecomp}
F(H\e) = e^{-\lambda_1 H\e(\eps^\nu)}  P\e.
\end{equation}
We momentarily admit the following lemma.
\begin{lem}
\label{l:concentrpoly}
If $d \ge 5$ and $\nu < 1/5$, then the variance of $\EEt_0[P\e]$ converges to $0$ polynomially fast as $\eps$ tends to $0$.
\end{lem}
Let us see how to finish the proof of Proposition~\ref{concentrpoly}, choosing some $\nu < 1/5$ (and $d \ge 5$). We will show that the variances of $\EEt_0[P\e]$ and $\EEt_0[F(H\e)]$ are close enough to conclude. Note that, from the decomposition~(\ref{FHdecomp}), one has
$$
0 \le P\e - F(H\e)  \le 1 - e^{-\lambda_1 H\e(\eps^\nu)}.
$$
It readily follows that
\begin{equation}
\label{boundPeps}
0 \le \E\left[ \EEt_0[P\e] \right] - \E\left[ \EEt_0[F(H\e)] \right]  \le 1 - \ov{\E}\left[ e^{-\lambda_1 H\e(\eps^\nu)} \right].
\end{equation}
It follows from Proposition~\ref{Hpres0} that the term on the right hand side converges to~$0$ polynomially fast, as $\eps$ tends to $0$. Similarly, we have
$$
0 \le \EEt_0[P\e]^2 - \EEt_0[F(H\e)]^2  \le 2 \left( \EEt_0[P\e] - \EEt_0[F(H\e)] \right).
$$
Integrating this inequality, and using the upper bound from~(\ref{boundPeps}), we obtain that the difference
$$
\E\left[ \EEt_0[P\e]^2 \right] - \E\left[ \EEt_0[F(H\e)]^2 \right]
$$
also converges polynomially fast to $0$, as $\eps$ tends to $0$. As a consequence, the difference between the variances of $\EEt_0[P\e]$ and $\EEt_0[F(H\e)]$ converges to $0$ polynomially fast, and Proposition~\ref{concentrpoly} is obtained using Lemma~\ref{l:concentrpoly}.
\end{proof}
\begin{proof}[Proof of Lemma~\ref{l:concentrpoly}]
We define the function $g\e(h)$ as
\begin{equation*}
g\e(h) = \exp\left(  -\lambda_1 (h(t_1-\eps^\nu) - h(0)) - \cdots - \lambda_n (h(t_n-\eps^\nu) - h(t_{n-1}-\eps^\nu) \right),
\end{equation*}
and we let $f(\tau) = \EEt_0[g\e(H\e)]$. Then $g\e(H\e)$ depends only on the trajectory up to time $\eps^{-1} (t_{n} - \eps^\nu) \le \eps^{-1} t_n$, and is translation invariant. As given by (\ref{vareq72}), one can rewrite $P\e$ as
$$
P\e = \EEt_0[f(\htau(\eps^{-1} \eps^\nu))] = f_{\eps^{\nu-1}}(\tau).
$$
As we assume that $d \ge 5$, Theorem~\ref{gtrajtau} shows that $\var(P\e) = \var(f_{\eps^{\nu-1}})$ is bounded by a constant times $\eps^{(1-\nu) d/2 - 2}$, so it is enough to chose $\nu < 1/5$ to guarantee a polynomial decay of the variance.
\end{proof}

We can now derive, following the method of proof of \cite[Lemma 4.1]{bs}, the convergence of the law of $H\e$ in the quenched sense.
\begin{prop}
\label{cvHquenched}
For almost every $\tau$, the law of $H\e$ under $\PPt_0$ converges, for the $M_1$ topology and as $\eps$ tends to $0$, to the law of $H$. 
\end{prop}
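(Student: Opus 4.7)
The plan is to apply the Ben Arous--Sznitman concentration argument \cite[Lemma~4.1]{bs}, combining the variance bound of Proposition~\ref{concentrpoly} with the small-time control of Proposition~\ref{Hpres0}. First, I fix a countable family $\mathcal{F}$ of functionals $F$ of the form~(\ref{Ftype}), with rational parameters $(\lambda_i, t_i)$ and $t_i > 0$, rich enough that convergence of $\EEt_0[F(\cdot)]$ for every $F \in \mathcal{F}$ characterizes the finite-dimensional marginals of an increasing cadlag process at all times $t > 0$. Along the geometric subsequence $\eps_k = 2^{-k}$, Proposition~\ref{concentrpoly} and Chebyshev's inequality give, for each such $F$ and some $\gamma > 0$,
$$
\P\!\left[\,\bigl|\EEt_0[F(H^{(\eps_k)})] - \ov{\E}[F(H^{(\eps_k)})]\bigr| > \eps_k^{\gamma/4}\,\right] \le C \eps_k^{\gamma/2},
$$
summable in $k$. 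Borel--Cantelli, together with the annealed convergence $\ov{\E}[F(H^{(\eps_k)})] \to \E[F(H)]$ furnished by Proposition~\ref{cvH}, yields $\P$-a.s.\ convergence $\EEt_0[F(H^{(\eps_k)})] \to \E[F(H)]$ simultaneously for every $F \in \mathcal{F}$, i.e.\ quenched convergence of the finite-dimensional marginals of $H^{(\eps_k)}$ at times $t > 0$ to those of $H$.

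Next, I would establish quenched tightness of $(H^{(\eps)})_{\eps > 0}$ under $\PPt_0$ for $\P$-a.e.~$\tau$ by verifying the conditions of Lemma~\ref{tightness}. Conditions~(\ref{tightness1}) and~(\ref{tightness3}) reduce to Laplace-transform inequalities of the form $\PPt_0[H^{(\eps)}(t) - H^{(\eps)}(t-\iota) \ge \eta'] \le (1 - \EEt_0[\exp(-\lambda(H^{(\eps)}(t) - H^{(\eps)}(t-\iota)))])/(1 - e^{-\lambda \eta'})$, which are controlled by Step~1 at least along the subsequence $\eps_k$. The delicate condition~(\ref{tightness2}) is precisely where Proposition~\ref{Hpres0} is needed: Fubini turns the annealed polynomial bound on $\ov{\P}[H^{(\eps)}(\eps^\nu) > \eps^\gamma]$ into $\E[\PPt_0[H^{(\eps_k)}(\eps_k^\nu) > \eps_k^\gamma]] \le C \eps_k^{\gamma'}$, and a Chebyshev/Borel--Cantelli step then gives, $\P$-a.s., $\PPt_0[H^{(\eps_k)}(\eps_k^\nu) > \eps_k^\gamma] \to 0$. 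The scaling identity $H^{(\eps)}(t) = (\eps/\eps_k)^{1/\alpha} H^{(\eps_k)}(t\,\eps_k/\eps)$ together with the monotonicity of $H^{(\eps_k)}$ gives, for $\eps \in [\eps_{k+1}, \eps_k]$, $H^{(\eps)}(\iota) \le H^{(\eps_k)}(2\iota)$, so the discrete-in-$k$ bound extends uniformly to all $\eps$; analogous comparisons handle the other two conditions.

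Combining quenched tightness with the subsequential convergence of marginals, any $M_1$-subsequential limit of $H^{(\eps)}$ under $\PPt_0$ (for $\tau$ in a set of full $\P$-measure) must share the finite-dimensional marginals of $H$ at times $t > 0$, the identification of marginals along an arbitrary sequence $\eps'_n \to 0$ being performed by extracting a further subsequence along which the $L^2(\P)$-consequence of Proposition~\ref{concentrpoly} is promoted to $\P$-a.s.\ convergence and interleaved with $\eps_k$. Since $H^{(\eps)}$ is increasing, the $M_1$-criterion of \cite[Theorem~12.5.2~(iii)]{whitt} applies and forces the limit to be $H$, yielding quenched $M_1$ convergence along the full continuous parameter. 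The main obstacle, I expect, is the quenched tightness near $t = 0$: since $\tau_0$ has infinite mean, there is no uniform Markov bound on $\EEt_0[H^{(\eps)}(t)]$, and the polynomial decay of Proposition~\ref{Hpres0}, rather than mere convergence to zero, is exactly what makes the Borel--Cantelli transfer from the annealed bound to a quenched one work.
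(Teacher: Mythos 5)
Your overall strategy --- polynomial variance decay from Proposition~\ref{concentrpoly}, Chebyshev and Borel--Cantelli along a geometric sequence, then monotonicity/scaling to interpolate, and finally Lemma~\ref{tightness} to reduce $M_1$-tightness of increasing processes to finite-dimensional convergence --- is exactly the paper's. The genuine gap is in the passage from the single subsequence $\eps_k = 2^{-k}$ to the full family $\eps \to 0$. With a fixed ratio $1/2$, the scaling identity only sandwiches $H\e(t)$ between $H^{(\eps_k)}(t/2)$ and $H^{(\eps_k)}(2t)$ (up to constants), and the resulting error does not vanish: in the limit it is of order $\ov{\E}\left[(H(2t)-H(t/2))\wedge 1\right]$, which is bounded away from $0$. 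This breaks both the identification of the finite-dimensional limits for general $\eps$ and your extension of condition~(\ref{tightness3}) beyond the subsequence. Your proposed repair --- given an arbitrary sequence $\eps'_n \to 0$, extract a further subsequence along which the $L^2(\P)$ bound is promoted to $\P$-a.s.\ convergence --- does not close the gap, because the exceptional null set produced by that extraction depends on the sequence $(\eps'_n)$; since there are uncountably many sequences tending to $0$, this does not yield a single full-measure set of environments on which convergence holds along every sequence, which is what the quenched statement requires. The paper's device is precisely designed to avoid this: one runs the Borel--Cantelli argument simultaneously for \emph{all} geometric sequences $(\mu^n)_{n}$ with $\mu$ rational in $(0,1)$ (a countable family, hence a single null set), and then the interpolation error is $\ov{\E}\left[\max_i (H(t_i) - H(\mu t_i)) \wedge 1\right]$, which tends to $0$ as $\mu \to 1$ by a.s.\ continuity of $H$ at deterministic times.

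Two smaller remarks. First, Proposition~\ref{Hpres0} is not where you place it: once your functionals of the form~(\ref{Ftype}) (whose first increment starts at $0$) converge quenched at small rational times, condition~(\ref{tightness2}) follows directly from
$$
\PPt_0[H\e(\iota) \ge \eta'] \le \frac{1 - \EEt_0\left[e^{-\lambda H\e(\iota)}\right]}{1 - e^{-\lambda \eta'}} \xrightarrow[\eps \to 0]{} \frac{1 - e^{-\iota \psi(\lambda)}}{1 - e^{-\lambda \eta'}},
$$
which is small for $\iota$ small; the real role of Proposition~\ref{Hpres0} is inside the proof of Proposition~\ref{concentrpoly}, where the interval $[0,\eps^\nu]$ must be discarded before Theorem~\ref{gtrajtau} can be applied --- but you take that proposition as given, so this is a misattribution rather than an error. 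Second, the paper's ordering is the cleaner one: establish quenched convergence of all finite-dimensional distributions first (for the full family $\eps \to 0$, via the $\mu \to 1$ interpolation), and then obtain tightness for free from Lemma~\ref{tightness}, rather than attempting to verify the three tightness conditions separately along a subsequence.
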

\begin{proof}
We know from Proposition~\ref{p:diagram} that $H\e$ converges to $H$ under the measure $\ov{\P}$ for the $M_1$ topology. As we saw before, this convergence, together with the knowledge that the limit described in Proposition~\ref{cvH} has no deterministic times with positive probability of jump, implies convergence of finite dimensional distributions under the annealed measure. For $F$ defined by (\ref{Ftype}), we thus have
\begin{equation}
\label{convergeanneal}
\E\EEt_0[F(H\e)] \xrightarrow[\eps \to 0]{} \ov{\E}[F(H)] = \exp(-t_1 \psi(\lambda_1) - \cdots - (t_n-t_{n-1}) \psi(\lambda_n)),
\end{equation}
where $\psi$ is the Laplace exponent of $H$ defined in (\ref{defpsi}).

Moreover, we have seen in Proposition~\ref{concentrpoly} that $\var(\EEt_0[F(H\e)])$ decays to $0$ polynomially fast. Let $\mu \in (0,1)$. We thus have that
$$
\sum_{n = 1}^{+ \infty} \var(\EEt_0[F(H^{(\mu^n)})]) < + \infty.
$$
As a consequence, the convergence of
$$
\EEt_0[F(H^{(\mu^n)})]
$$
towards $\ov{\E}[F(H)]$ holds almost surely. In fact, with probability one, this convergence holds jointly for any function $F$ of the form (\ref{Ftype}) with $\lambda_1,\ldots,\lambda_n,t_1,\ldots,t_n$ and $\mu$ rationals. Using the monotonicity of $H\e$ and the continuity of the limit (see (\ref{convergeanneal})), the convergence can be extended to any $\lambda_1,\ldots,\lambda_n,t_1,\ldots,t_n$ simultaneously.

On the set of full measure where this joint convergence holds, we will show that for any $F$ of the form (\ref{Ftype}), one has
\begin{equation}
\label{cvfdquench}
\EEt_0[F(H\e)] \xrightarrow[\eps \to 0]{} \ov{\E}[F(H)] . 
\end{equation}
In other words, we will show that for any $\tau$ belonging to this set of full measure, the finite-dimensional distributions of $H\e$ converge to those of $H$. In order to do so, we approximate $H\e$ by some $H^{(\mu^n)}$, for a well chosen $n$. Let $n_\eps$ be the smallest integer satisfying $\mu^{n_\eps} < \eps$. The function $F$ defined in (\ref{Ftype}) is such that, for any two increasing processes $h$ and $h'$ starting from $0$~:
$$
|F(h) - F(h')| \le C \max_{1 \le i \le n} |h(t_i) - h'(t_{i})| \wedge 1.
$$
Observe that 
$$
H\e(t_i) = \left( \frac{\eps}{\mu^{n_\eps}} \right)^{1/\alpha} H^{(\mu^{n_\eps})}\left( \frac{\mu^{n_\eps}}{\eps} t_i \right),
$$
and moreover, because of the definition of $n_\eps$ (and the monotonicity of $H^{(\mu^{n_\eps})}$), the latter is greater than $H^{(\mu^{n_\eps})}(\mu t_i)$, and as a consequence,
\begin{equation}
\label{compHmuH}
0 \le H^{(\mu^{n_\eps})}(t_i) - H\e(t_i)  \le  H^{(\mu^{n_\eps})}(t_i) - H^{(\mu^{n_\eps})}(\mu t_i)
\end{equation}

The quantity
$$
\limsup_{\eps \to 0} \left| \EEt_0[F(H\e)] - \EEt_0[F(H^{(\mu^{n_\eps})})] \right|
$$
is thus, up to a constant, bounded by
$$
\limsup_{\eps \to 0}  \EEt_0\left[\max_{1 \le i \le n} |H\e(t_i) - H^{(\mu^{n_\eps})}(t_{i})| \wedge 1 \right],
$$
which, as we obtain from the inequalities (\ref{compHmuH}), is bounded by 
\begin{multline*}
\limsup_{\eps \to 0}  \EEt_0\left[\max_{1 \le i \le n} (H^{(\mu^{n_\eps})}(t_i) - H^{(\mu^{n_\eps})}(\mu t_i)) \wedge 1 \right] 
\\ =  \ov{\E}\left[\max_{1 \le i \le n} (H(t_i) - H(\mu t_{i})) \wedge 1 \right].
\end{multline*}
The process $H$ being almost surely continuous at deterministic times, this last quantity tends to $0$ as $\mu$ converges to $1$. We thus obtain the claim (\ref{cvfdquench}), letting $\mu$ tend to $1$ along rationals. 

What is left to do is to check the tightness of the process in the sense of the $M_1$ topology. Lemma~\ref{tightness} shows that, as far as increasing processes are concerned, convergence of the finite-dimensional distributions is sufficient. 
\end{proof}

We can now prove our main result, namely the almost sure convergence of the joint process $(\hX\e,H\e)$. We recall from Proposition~\ref{cvjointannealed} that the process $(B,H)$ is such that $B$ is the Brownian motion appearing in Proposition~\ref{tclquenched}, $H$ is the subordinator whose Laplace exponent is given in Proposition~\ref{cvH}, and the random variables $B$, $H$ are independent.
\begin{prop}
\label{cvjointquenched}
For almost every $\tau$, the law of $(\hX\e,H\e)$ under $\PPt_0$ converges, for the $J_1 \times M_1$ topology and as $\eps$ tends to $0$, to the law of $(B,H)$.
\end{prop}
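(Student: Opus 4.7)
The proof combines tightness of the joint law, identification of the limit via quenched asymptotic independence, and a variance estimate to pass from the annealed to the quenched setting. Tightness of $(\hX\e, H\e)$ under $\PPt_0$ in the product $J_1 \times M_1$ topology is immediate for almost every $\tau$, since Propositions~\ref{tclquenched} and~\ref{cvHquenched} give quenched convergence, hence quenched tightness, of each marginal, and tightness in a product topology is equivalent to tightness of each marginal. It therefore suffices to show that every subsequential limit equals in law $(B,H)$ with $B$ and $H$ independent, which reduces to proving quenched asymptotic independence of $\hX\e$ and $H\e$. Concretely, for every bounded continuous $\varphi : (\R^d)^p \to \R$, every functional $F$ of the form~(\ref{Ftype}), and every choice of positive times $u_1,\ldots,u_p$ and $t_1,\ldots,t_n$, I aim to establish
\begin{equation}
\label{eqquenchedjointindep}
\EEt_0\!\left[\varphi(\hX\e(u_1),\ldots,\hX\e(u_p))\,F(H\e)\right] \xrightarrow[\eps \to 0]{} \E\!\left[\varphi(B(u_1),\ldots,B(u_p))\right] \ov\E[F(H)]
\end{equation}
for almost every $\tau$; combined with tightness, this identifies every subsequential limit uniquely.

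The annealed version of~(\ref{eqquenchedjointindep}) follows from Proposition~\ref{cvjointannealed} and the independence of $B$ and $H$, using that $H$ has no fixed times of discontinuity. Following the strategy of Proposition~\ref{cvHquenched}, the quenched statement would then be obtained by establishing a polynomial decay in $\eps$ for the variance (as a function of $\tau$) of the left-hand side of~(\ref{eqquenchedjointindep}), then applying Borel--Cantelli along a geometric subsequence $\eps = \mu^n$ with $\mu \in (0,1) \cap \Q$, and finally extending to all $\eps \to 0$ using the monotonicity of $H\e$ (exactly as in the end of the proof of Proposition~\ref{cvHquenched}) together with the quenched invariance principle of Proposition~\ref{tclquenched} to control the $\hX\e$ part between two consecutive values of the geometric subsequence.

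The only new ingredient is thus the variance bound for a functional depending on both $\hX\e$ and $H\e$, which I would obtain by adapting the argument of Lemma~\ref{l:concentrpoly}. Fix $\nu \in (0,1/5)$ and approximate the quantity inside the quenched expectation by a truncated version depending only on the trajectory on $[\eps^{\nu-1}, \eps^{-1} t_n]$. For the factor $F(H\e)$ this is done exactly as in Proposition~\ref{concentrpoly}, the approximation error being controlled by $1 - e^{-\lambda_1 H\e(\eps^\nu)}$ and Proposition~\ref{Hpres0}. For the factor $\varphi$, I would replace each $\hX\e(u_i) = \sqrt\eps\,\hX_{\eps^{-1} u_i}$ by its centered version $\sqrt\eps\,(\hX_{\eps^{-1} u_i} - \hX_{\eps^{\nu - 1}})$; the correction $\sqrt\eps\,\hX_{\eps^{\nu-1}}$ is of order $\eps^{\nu/2}$ by Proposition~\ref{tclquenched}, and a uniform-continuity-on-compacts argument (after truncating on an event where $\hX_{\eps^{\nu-1}}$ is not too large) shows that the induced error tends to zero. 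The truncated functional so obtained is translation invariant in $\hX$ and depends only on the trajectory up to time $\eps^{-1}(t_n - \eps^\nu) \le \eps^{-1} t_n$, so by the Markov property at time $\eps^{\nu-1}$ and the identity~(\ref{vareq72}) its quenched expectation is of the form $f^\eps_{\eps^{\nu-1}}(\tau)$ required by Theorem~\ref{gtrajtau}. Applying that theorem yields a variance bound of order $\eps^{(1-\nu)d/2 - 2}$, which is polynomially small when $d \ge 5$ and $\nu < 1/5$. The main obstacle, and the only step needing genuine care, is this centering procedure for $\varphi$, since $\varphi$ is only bounded continuous rather than Lipschitz, so that the error from centering must be controlled via a compactness argument rather than by a direct Lipschitz bound.
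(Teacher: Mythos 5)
Your overall architecture (quenched tightness of the marginals, identification of subsequential limits through functionals whose quenched expectations are shown to converge almost surely via annealed convergence, polynomial variance decay, Borel--Cantelli along a geometric subsequence, and extension to all $\eps$) is the same as the paper's. The deviation, and the problem, is in your choice of test functionals. You work with $\varphi(\hX\e(u_1),\ldots,\hX\e(u_p))\,F(H\e)$ and then must \emph{center} the $\hX$ part at time $\eps^{\nu-1}$ to make the functional translation invariant so that Theorem~\ref{gtrajtau} applies. The error you introduce by centering is $\sqrt{\eps}\,\hX_{\eps^{\nu-1}}$, and your control of it is only qualitative: tightness of $\hX^{(\eps^{1-\nu})}(1)$ plus uniform continuity of $\varphi$ on compacts gives an error that tends to $0$, but with no rate. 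That is not enough. The Borel--Cantelli step needs $\sum_n \var\big(\EEt_0[\Phi^{(\mu^n)}]\big)<\infty$, and the bound $\var(\EEt_0[\Phi\e])\le 2\var(\EEt_0[\td\Phi\e])+2\,\ov{\E}\big[(\Phi\e-\td\Phi\e)^2\big]$ leaves you with a second term that must itself be summable along $\eps=\mu^n$. To get a summable rate you would need both a Lipschitz (not merely continuous) $\varphi$ and a quantitative, polynomially decaying bound on $\ov{\P}\big[\sqrt{\eps}\,|\hX_{\eps^{\nu-1}}|\ge \eps^{\nu/4}\big]$ --- essentially the estimate $\ov{\P}[\sup_{s\le t}|\hX_s|\ge t^\beta]\to 0$ polynomially for $\beta>1/2$, which the paper explicitly points out (in the Remark closing Section~\ref{s:quenched}) is an \emph{additional} ingredient that it does not establish and deliberately avoids. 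Contrast this with the treatment of the $H$ part, where Proposition~\ref{Hpres0} does supply the needed polynomial rate; nothing analogous is available to you for $\hX$ from the results you cite.

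The paper sidesteps the issue entirely by testing against Fourier functionals of the \emph{increments} $Z\e(t_i)-Z\e(t_{i-1})$ with $t_0>0$: increments of $\hX$ are translation invariant by construction, so no centering is needed, the functional depends only on the trajectory on $[\eps^{-1}t_0,\eps^{-1}t_n]$, and Theorem~\ref{gtrajtau} applies directly with variance of order $\eps^{d/2-2}t_n^2/t_0^{d/2}$. The price --- that one only controls increments on intervals bounded away from $0$ --- is recovered at the end by letting $t_0\downarrow 0$ using right continuity of the limit and $Z(0)=0$. I would recommend you rewrite the identification step in this form; as it stands, the centering step is a genuine gap, not merely a point ``needing care.''
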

\begin{proof}
We recall that we write $Z\e$ for the process $(\hX\e, H\e)$, and $Z$ for the process $(B,H)$. As a first step, Propositions~\ref{tclquenched} and \ref{cvHquenched} ensure that, for $\tau$ in a set of full measure $\Omega_1$, the laws of $Z\e$ under $\PPt_0$ are tight. 

We now show that the laws of the increments of $Z\e$, on intervals that do not contain $0$, converges almost surely to those of $Z$. Let $\lambda_1,\ldots,\lambda_n \in \R^{d+1}$, and $0 < t_0 < \cdots < t_n$. For a process $z$ with values in $\R^{d+1}$, we define $G(z)$ as
\begin{equation}
\label{Gtype}
G(z) = \exp\big( i \lambda_1 \cdot (z(t_1)-z(t_0)) + \cdots + i \lambda_n \cdot (z(t_n)-z(t_{n-1})) \big).
\end{equation}

From Proposition~\ref{cvjointannealed}, we know that 
$$
\ov{\E}[G(Z\e)] \xrightarrow[\eps \to 0]{} \ov{\E}[G(Z)].
$$
Moreover, one can adapt the proof of Lemma~\ref{l:concentrpoly} to show that the variance of $\EEt_0[G(Z\e)]$ converges to $0$ polynomially fast as $\eps$ tends to $0$. Indeed, the main difference between $P\e$ and $G(Z\e)$ is that $\eps^\nu$ should be replaced by $t_0 > 0$. For any $\mu \in (0,1)$, the sum
$$
\sum_{n = 1}^{+ \infty} \var(\EEt_0[G(Z^{(\mu^n)})])
$$
is thus finite, and as a consequence, the convergence 
$$
\EEt_0[G(Z^{(\mu^n)})] \xrightarrow[n \to +\infty]{} \ov{\E}[G(Z)]
$$
holds almost surely. In fact, for $\tau$ in a set of full measure, say $\Omega_2$, this convergence holds for any function $G$ of the form (\ref{Gtype}) with $\lambda_1,\ldots,\lambda_n,t_0,\ldots,t_n,\mu$ rationals. We can then proceed as in the proof of Proposition~\ref{cvHquenched} to show that, for any such $G$ and for any $\tau \in \Omega_2$, one has
$$
\EEt_0[G(Z\e)] \xrightarrow[\eps \to 0]{} \ov{\E}[G(Z)].
$$

Let $\tau$ be an element of $\Omega_1 \cap \Omega_2$, and let $\eps_k$ be a sequence such that the law of $Z\ek$ under $\PPt_0$ converges to the law of some $\td{Z}$ (for convenience, we assume that it is defined on the same probability space equipped with the measure $\ov{\P}$). As $\tau$ belongs to $\Omega_2$, we know that for any function $G$ of the form (\ref{Gtype}) with $\lambda_1,\ldots,\lambda_n,t_0,\ldots,t_n$ rationals, one has
$$
\ov{\E}[G(Z)] = \ov{\E}[G(\td{Z})].
$$
Using right continuity of the processes, the equality extends to any $G$ with $0 \le t_0 \le \cdots \le t_n$. The Fourier transform being continuous, it holds as well for any $\lambda_1,\ldots,\lambda_n \in \R^{d+1}$, and thus $Z$ and $\td{Z}$ have the same law.

To summarize, we have shown that, for $\tau \in \Omega_1 \cap \Omega_2$, the laws of $Z\e$ are tight and have a unique possible limit point, namely $Z$. This proves the proposition, as the set $\Omega_1 \cap \Omega_2$ is of full measure.
\end{proof}

\noindent \textbf{Remark.} What we really used from Proposition~\ref{tclquenched} is the annealed invariance principle, and the tightness of $\hX\e$ under the quenched measure. One can also prove tightness directly, in a way similar to what we did here for the tightness of~$H\e$. However, one then needs some equivalent of Proposition~\ref{Hpres0} for $\hX$. Precisely, one needs to show that, for any $\beta > 1/2$, the probability
$$
\ov{\P}\left[ \sup_{s \le t} |\hX_s| \ge t^\beta \right]
$$
decays polynomially fast as $t$ tends to infinity. 
%
%
%
%
%
%
\section{Conclusion}
\label{s:concl}
\setcounter{equation}{0}
\begin{prop}
\label{concl}
For almost every $\tau$, the law of $X\e$ under $\PPt_0$ converges, for the $J_1$ topology and as $\eps$ tends to $0$, to the law of $B \circ H^{-1}$.
\end{prop}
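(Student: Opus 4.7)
The plan is to derive this from the joint convergence of $(\hX\e, H\e)$ established in Proposition~\ref{cvjointquenched}, via a continuous mapping argument applied to the functional $(f, g) \mapsto f \circ g^{-1}$.

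First I would record the algebraic identity $X\e(t) = \hX\e((H\e)^{-1}(t))$, where $(H\e)^{-1}$ denotes the right-continuous inverse of the increasing process $H\e$. This follows directly from the relation $X_t = \hX_{A^{-1}(t)}$ together with the definitions of the rescaled processes: a short calculation gives $(H\e)^{-1}(t) = \eps A^{-1}(\eps^{-1/\alpha} t)$, so that $\hX\e((H\e)^{-1}(t)) = \sqrt{\eps}\, \hX_{A^{-1}(\eps^{-1/\alpha} t)} = X\e(t)$. Since the limit $B \circ H^{-1}$ has exactly the same structure, the strategy is transparent.

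Next I would apply the continuous mapping theorem to $\Phi : (f, g) \mapsto f \circ g^{-1}$ and verify its continuity at the particular random point $(B, H)$. Two standard facts about the limit are needed. First, because $\alpha \in (0,1)$ and $H$ has zero drift, its L\'evy measure has infinite total mass and $H$ is almost surely strictly increasing; this ensures that $H^{-1}$ is almost surely continuous, and that the inversion map $g \mapsto g^{-1}$ is continuous at $H$ when one equips the domain with the $M_1$ topology and the image with the topology of uniform convergence on compact sets (see for instance \cite[Corollary~13.6.4]{whitt}). Hence $(H\e)^{-1} \to H^{-1}$ uniformly on compact sets in law. Second, $B$ is continuous, which upgrades the $J_1$-convergence $\hX\e \to B$ to uniform convergence on compact sets. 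Composing a uniformly convergent sequence of functions with a uniformly convergent sequence of time changes whose limit is continuous yields uniform convergence of the compositions, so $\hX\e \circ (H\e)^{-1} \to B \circ H^{-1}$ uniformly on compacts. The limit $B \circ H^{-1}$ being continuous, this implies $J_1$-convergence, and the continuous mapping theorem combined with Proposition~\ref{cvjointquenched} delivers the claim.

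The main technical delicacy lies in this continuity-of-composition step: composition is generically discontinuous for Skorokhod topologies, and here one even mixes a $J_1$-converging coordinate with an $M_1$-converging one. That composition nevertheless behaves well rests crucially on the continuity of $B$ and on the strict monotonicity of $H$. Both are structural properties of the limit already at hand, so no new probabilistic input is required beyond Proposition~\ref{cvjointquenched} and the basic properties of stable subordinators.
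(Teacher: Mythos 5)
Your proposal is correct and follows essentially the same route as the paper: both rest on the identity $X\e = \hX\e \circ (H\e)^{-1}$, the continuity of the inversion map at strictly increasing unbounded limits (Whitt, Corollary~13.6.4), and the continuity of composition at pairs of continuous limit functions (which the paper cites as \cite[Theorem~13.2.1]{whitt} and you re-derive by hand). The only cosmetic difference is that you make explicit the strict monotonicity of $H$ and the hand-verification of the composition step, which the paper delegates to Whitt's theorem.
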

\begin{proof}
Following the notation in \cite{whitt}, let us write $D_{\uparrow}$ (resp.~$D_{u,\uparrow \uparrow}$) for the subset of $D([0,+\infty),\R)$ made of increasing (resp.~unbounded and strictly increasing) functions, with value $0$ at $0$. We also let $C$ be the set of continuous functions from $[0,+\infty)$ to $\R$, equipped with the uniform topology, that we will write $U$. According to \cite[Corollary 13.6.4]{whitt}, the inverse map
$$
\left\{
\begin{array}{ccc}
D_{u,\uparrow \uparrow}, M_1 & \to & C,U \\
x & \mapsto & x^{-1}
\end{array}
\right.
$$
is continuous. As a consequence, $(H\e)^{-1}$ converges in distribution to $H^{-1}$ for the uniform topology, and a fortiori for the $J_1$ topology. Moreover, as we learn from \cite[Theorem 13.2.1]{whitt}, the composition map
$$
\left\{
\begin{array}{ccc}
D([0,+\infty),\R^d) \times D_{\uparrow}, J_1 \times J_1 & \to & D([0,+\infty),\R^d),J_1 \\
(x,y) & \mapsto & x \circ y
\end{array}
\right.
$$
is measurable, and continuous on pairs of continuous functions. Hence, $X\e = \hX\e \circ (H\e)^{-1}$ converges in distribution to $B \circ H^{-1}$, and the proposition is proved.
\end{proof}

%
%
%
%
%
%
\section{Appendix}
\label{s:trans}
\setcounter{equation}{0}
Let $\mfk{L}$ be the generator of the random walk $\hX$, defined by~:
$$
\mfk{L} f (x) = \sum_{y \sim x}  (\tau_x \tau_y)^a (f(y)-f(x)).
$$
We write $(\cdot,\cdot)$ for the scalar product with respect to the counting measure. We define the Dirichlet form associated to $\mfk{L}$, as
$$
\mfk{E}(f,f) = (-\mfk{L} f,f) = \frac{1}{2} \sum_{x,y \in \Z^d} (\tau_x \tau_y)^a (f(y)-f(x))^2,
$$
together with the Dirichlet form $\mfk{E}^\circ$ associated with the simple random walk, obtained by taking $a = 0$ in the expression above. Note that from the definition, as a consequence of our hypothesis that conductances are uniformly bounded from below by $1$, one Dirichlet form dominates the other~:
\begin{equation}
\label{compDirichlet}
\mfk{E}^\circ(f,f) \le \mfk{E}(f,f).
\end{equation}
Let $B_n = \{-n,\ldots, n\}^d$ be the box of size $n$, and $B_n'$ be its complement in $\Z^d$. We introduce the effective conductance $C_n$ between the origin and $B_n'$, which is given by the following variational formula~:
\begin{equation}
\label{defCn}
C_n(\tau) = \inf  \left\{\mfk{E}(f,f) \ | \ f(0) = 1, f_{|B_n'} = 0 \right\},
\end{equation}
and we let $C_n^\circ$ be defined the same way, with $\mfk{E}$ replaced by $\mfk{E}^\circ$. Furthermore, we define $\ov{C}_n(\tau)$ as
\begin{equation}
\label{defovCn}
\ov{C}_n(\tau) = \inf  \left\{\mfk{E}(f,f) \ | \ f_{\mcD(0)} = 1, f_{|B_n'} = 0 \right\},
\end{equation}
where we recall that $\mcD(0)$ is the set formed by the origin and its neighbours. It is intuitively clear that $\ov{C}_n(\tau)$ does not depend on $\tau_0$, and that $\ov{C}_n(\tau)$ is the limit as $\tau_0$ goes to infinity of $C_n(\tau)$. The next proposition provides a quantitative estimate on this convergence. We write $q^\tau(x,y)$ for the probability for the walk starting from $x$ to jump to the site $y$. 

\begin{prop}
\label{compC}
For any environment $\tau$, and any integer $n$, the following comparisons hold~:
$$
C_n^\circ \le C_n(\tau) \le \ov{C}_n(\tau),
$$
$$
\ov{C}_n(\tau) \le \left( \min_{y \sim 0} q^\tau(y,0) \right)^{-2} C_n(\tau).
$$
\end{prop}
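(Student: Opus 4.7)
The first two inequalities should be immediate from the definitions. Both $C_n^\circ$ and $C_n(\tau)$ are infima of $\mfk{E}^\circ$ and $\mfk{E}$ respectively, taken over the same admissible class $\{f : f(0)=1,\ f|_{B_n'}=0\}$, so the pointwise bound~(\ref{compDirichlet}) yields $C_n^\circ \le C_n(\tau)$ after taking the infimum. For $C_n(\tau) \le \ov{C}_n(\tau)$, any competitor for $\ov{C}_n(\tau)$ automatically satisfies $f(0)=1$ (since $0 \in \mcD(0)$), hence is also a competitor for $C_n(\tau)$; the infimum in~(\ref{defovCn}) is thus taken over a smaller class.

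The substantial content is the bound $\ov{C}_n(\tau) \le q_{\min}^{-2}\, C_n(\tau)$, where $q_{\min} := \min_{y \sim 0} q^\tau(y,0)$. The plan is to construct a competitor for $\ov{C}_n(\tau)$ by modifying the minimizer of $C_n(\tau)$, paying only a factor $q_{\min}^{-2}$ in energy. Let $f^*$ denote the harmonic minimizer of $C_n(\tau)$; standardly, $\mfk{E}(f^*,f^*) = C_n(\tau)$ and $f^*$ admits the probabilistic representation $f^*(y) = \PPt_y[\mathcal{T}_0 < \mathcal{T}_{B_n'}]$ for $y \in B_n$ (where $f^*$ is extended by $0$ outside $B_n$). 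The key observation is that for every neighbour $y$ of $0$, jumping directly to $0$ on the very first step is one way to hit $0$ before leaving $B_n$, giving
$$
f^*(y) \ge q^\tau(y,0) \ge q_{\min}.
$$

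Define $\bar f(x) := \min\!\bigl(f^*(x)/q_{\min},\,1\bigr)$ on $\Z^d$. Since $q_{\min} \le 1$, we have $\bar f(0) = 1$; the lower bound above forces $\bar f(y) = 1$ for every $y \sim 0$; and $\bar f|_{B_n'} = 0$. Hence $\bar f$ is admissible for $\ov{C}_n(\tau)$. Because the truncation $t \mapsto \min(t,1)$ is $1$-Lipschitz, it cannot increase the quadratic Dirichlet form, so
$$
\ov{C}_n(\tau) \le \mfk{E}(\bar f,\bar f) \le \mfk{E}(f^*/q_{\min},\,f^*/q_{\min}) = q_{\min}^{-2}\,\mfk{E}(f^*,f^*) = q_{\min}^{-2}\, C_n(\tau),
$$
as required. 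The only step calling for a moment's care is the probabilistic identification of $f^*$ used to extract the lower bound $f^*(y) \ge q^\tau(y,0)$; this is a standard fact about the Dirichlet problem on finite graphs, so no genuine obstacle is expected.
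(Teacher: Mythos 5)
Your proof is correct and follows essentially the same route as the paper: the paper also takes the harmonic minimizer $f(x)=\PPt_x[\mathcal{T}_0<\mathcal{T}_{B_n'}]$, truncates $\min(m^{-1}f,1)$ with $m=\min_{y\sim 0}f(y)$, and then bounds $m\ge\min_{y\sim 0}q^\tau(y,0)$ at the end, whereas you normalize by $q_{\min}$ directly — a purely cosmetic difference. No gaps.
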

\begin{proof}
The first two inequalities are obvious, using (\ref{compDirichlet}). Recall that we write $\mathcal{T}_0$ for the hitting time of $0$. Let $\mathcal{T}_{B_n'}$ be the hitting time of $B_n'$. There exists a unique function $f$ that minimizes (\ref{defCn}), which is given by
\begin{equation}
\label{minimizer}
f(x) = \PPt_x[\mathcal{T}_0 < \mathcal{T}_{B_n'}].
\end{equation}
Let us write $m$ for $\min_{y \sim 0} f(y)$, and consider the function
$$
g(x) = \min(m^{-1} f(x) ,1).
$$
Then $g$ is constant equal to $1$ on $\mcD(0)$, and is $0$ outside $B_n$. It is thus clear that
$$
\ov{C}_n(\tau) \le \mfk{E}(g,g).
$$
On the other hand, one has
$$
\mfk{E}(g,g) \le m^{-2} \mfk{E}(f,f) = m^{-2} C_n(\tau).
$$
The last claim of the Proposition follows from the observation that, for any $y$ neighbour of the origin,
$$
f(y) = \PPt_y[\mathcal{T}_0 < \mathcal{T}_{B_n'}] \ge q^\tau(y,0).
$$
\end{proof}

Recall the definition of $\sigma^\tau(x,y)$ from (\ref{defsigma}). If $y$ is a neighbour of $0$, one has~:
$$
q^\tau(y,0) = \frac{(\tau_0)^a}{\sum_{z \sim y} (\tau_z)^a} = \left( 1 + \frac{\sigma^\tau(y,0)}{(\tau_0)^a} \right)^{-1},
$$
from which it follows that $\ov{C}_n(\tau)$ is indeed the limit of $C_n(\tau)$ as $\tau_0$ tends to infinity.

For $n \in \N \cup \{\infty\}$, let $\mathcal{G}_n^\tau(\cdot,\cdot)$ (resp. $\mathcal{G}_n^\circ(\cdot,\cdot)$) be the Green function of the walk $\hX$ (resp. of the simple random walk) killed when exiting $B_n$, or without killing if $n = \infty$. The function $f$ in (\ref{minimizer}) that minimizes (\ref{defCn}) can be rewritten as
$$
f = \frac{\mathcal{G}_n^\tau(\cdot,0)}{\mathcal{G}_n^\tau(0,0)},
$$
and, as $-\mfk{L} \mathcal{G}_n^\tau(\cdot,0) = \1_{0}$ on $B_n$, we obtain~:
\begin{equation}
\label{CnG-1}
C_n(\tau) = \mfk{E}(f,f) = (-\mfk{L} f, f) = {\mathcal{G}_n^\tau(0,0)}^{-1}.
\end{equation}
We define $C_\infty^\circ$, $C_\infty(\tau)$ and $\ov{C}_\infty(\tau)$ as the limits of, respectively, $C_n^\circ$, $C_n(\tau)$ and $\ov{C}_n(\tau)$. Monotonicity ensures that these limits are well defined. Because of the transience of the simple random walk in dimension three and higher, we also know that $C_\infty^\circ$ is strictly positive (\cite[Theorem 2.3]{trees}), and thus $C_\infty(\tau)$ and $\ov{C}_\infty(\tau)$ as well. 

We recall that we write $G(\tau)$ for $\mathcal{G}_\infty^\tau(0,0)$, which is also $C_\infty(\tau)^{-1}$. We let $\ov{G}(\tau)$ be the inverse of $\ov{C}_\infty(\tau)$. In the next proposition, we will see that this definition coincides with the one given in~(\ref{limitG}).

\begin{prop}
\label{compG}
For any environment $\tau$, the following inequalities hold~:
$$
G(\tau) \le (C_\infty^\circ)^{-1},
$$
$$
\ov{G}(\tau) \le {G}(\tau) \le \left( \min_{y \sim 0} q^\tau(y,0) \right)^{-2} \ov{G}(\tau).
$$
In particular, $\ov{G}(\tau)$ satisfies~(\ref{limitG}).
\end{prop}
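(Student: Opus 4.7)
The plan is to deduce Proposition~\ref{compG} from Proposition~\ref{compC} by passing to the limit $n \to \infty$ and using the identity $C_n(\tau) = \mathcal{G}_n^\tau(0,0)^{-1}$ already recorded in~(\ref{CnG-1}).

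First I would handle the bounds. Writing $C_\infty(\tau) = \lim_n C_n(\tau) = G(\tau)^{-1}$ and setting $\ov{C}_\infty(\tau) = \lim_n \ov{C}_n(\tau) = \ov{G}(\tau)^{-1}$, Proposition~\ref{compC} gives, after taking $n \to \infty$:
$$
C_\infty^\circ \le C_\infty(\tau) \le \ov{C}_\infty(\tau) \le \left(\min_{y \sim 0} q^\tau(y,0)\right)^{-2} C_\infty(\tau).
$$
Inverting each inequality (and using the strict positivity of $C_\infty^\circ$, inherited from transience of simple random walk in $d \ge 3$) yields
$$
G(\tau) \le (C_\infty^\circ)^{-1}, \qquad \ov{G}(\tau) \le G(\tau) \le \left(\min_{y \sim 0} q^\tau(y,0)\right)^{-2} \ov{G}(\tau),
$$
which are the first two claims.

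It remains to verify~(\ref{limitG}). The key observation is that $\ov{C}_n(\tau)$, and hence $\ov{G}(\tau)$, depends only on $(\tau_z)_{z \neq 0}$. Indeed, in the variational problem (\ref{defovCn}), any admissible $f$ is constant equal to $1$ on $\mcD(0)$, so for every edge $\{0,y\}$ incident to the origin one has $f(y) - f(0) = 0$. These are the only edges whose weight $(\tau_0 \tau_y)^a$ involves $\tau_0$, so they contribute nothing to $\mfk{E}(f,f)$, which is therefore independent of $\tau_0$. Consequently the infimum in~(\ref{defovCn}) is also $\tau_0$-independent.

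Finally, for any neighbour $y$ of the origin we have (recalling~(\ref{defsigma}))
$$
q^\tau(y,0) = \left(1 + \frac{\sigma^\tau(y,0)}{(\tau_0)^a}\right)^{-1},
$$
and since $a > 0$ (we work in the case $a \neq 0$) and $\sigma^\tau(y,0)$ does not depend on $\tau_0$, one has $\min_{y \sim 0} q^\tau(y,0) \to 1$ as $\tau_0 \to +\infty$. The sandwich
$$
\ov{G}(\tau) \le G(\tau) \le \left(\min_{y \sim 0} q^\tau(y,0)\right)^{-2} \ov{G}(\tau)
$$
then forces $G(\tau) \to \ov{G}(\tau)$ as $\tau_0 \to +\infty$, which identifies the limit defined in~(\ref{limitG}) with $\ov{G}$. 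The only mildly delicate point is the $\tau_0$-independence of $\ov{C}_n(\tau)$, which is however transparent once one writes out the Dirichlet form and observes that the origin-incident edges are neutralized by the boundary condition $f_{|\mcD(0)} \equiv 1$.
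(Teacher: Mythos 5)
Your proposal is correct and follows essentially the same route as the paper: take $n\to\infty$ in Proposition~\ref{compC}, invert via the identity~(\ref{CnG-1}), and identify $\ov{G}$ with the limit in~(\ref{limitG}) by observing that $\min_{y\sim 0}q^\tau(y,0)\to 1$ as $\tau_0\to+\infty$ (using $a\neq 0$). Your explicit check that the boundary condition $f_{|\mcD(0)}\equiv 1$ neutralizes the $\tau_0$-dependent edges is a detail the paper only asserts as ``intuitively clear,'' but it is the same argument.
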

\begin{proof}
These are direct consequences of Proposition~\ref{compC}, together with the identity~(\ref{CnG-1}).
\end{proof}

Finally, we recall here a classical result concerning the decay of the transition probability of the random walk.

\begin{prop}
\label{diagonaldecay}
There exists $C > 0$ such that, for any $x,y \in \Z^d$ and any $t \ge 0$, one has~:
$$
\PPt_x[\hX_t = y] \le \frac{C}{t^{d/2}}.
$$
\end{prop}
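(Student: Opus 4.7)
The plan is to prove this via the Nash inequality, exploiting the fact that the conductances $(\tau_x\tau_y)^a$ are uniformly bounded below by $1$.

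First I would reduce to the on-diagonal case. Let $p_t(x,y)=\PPt_x[\hX_t=y]$. Since the counting measure is reversible, $p_t(x,y)=p_t(y,x)$, and by Cauchy--Schwarz together with the semigroup property,
\[
p_t(x,y) \;=\; \sum_{z\in\Z^d} p_{t/2}(x,z)\,p_{t/2}(z,y) \;\le\; p_t(x,x)^{1/2}\,p_t(y,y)^{1/2}.
\]
So it suffices to bound $p_t(x,x)$ uniformly in $x\in\Z^d$ by $C/t^{d/2}$.

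Next I would invoke the classical Nash inequality for $\Z^d$: there is a constant $c_d>0$ such that for every finitely supported $f:\Z^d\to\R$,
\[
\|f\|_2^{2+4/d}\;\le\; c_d\,\mfk{E}^\circ(f,f)\,\|f\|_1^{4/d}.
\]
Using inequality~(\ref{compDirichlet}) (which holds because $\tau_x\ge 1$ gives $(\tau_x\tau_y)^a\ge 1$), we get the same Nash inequality with $\mfk{E}$ in place of $\mfk{E}^\circ$.

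Fix $x\in\Z^d$ and set $u_t(\cdot)=p_t(x,\cdot)$, $\phi(t)=\|u_t\|_2^2$. Since the counting measure is reversible and $\|u_t\|_1=1$, a standard computation yields
\[
\phi'(t) \;=\; -2\,\mfk{E}(u_t,u_t) \;\le\; -\frac{2}{c_d}\,\phi(t)^{1+2/d}.
\]
Integrating this differential inequality (with the trivial initial bound $\phi(0^+)\le 1$ dropped by starting at arbitrarily small time) gives $\phi(t)\le C t^{-d/2}$ for some $C=C(d)$. Finally, $p_t(x,x)=\langle u_{t/2},u_{t/2}\rangle=\phi(t/2)\le 2^{d/2}C t^{-d/2}$, and combining with the Cauchy--Schwarz reduction above concludes the proof.

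There is no real obstacle here: the only point requiring a moment of care is that the Nash inequality must be applied to the Dirichlet form of the walk $\hX$, and this is immediate from the uniform lower bound on conductances. This is why the bound is uniform over the environment $\tau$, which is precisely the form in which the proposition is used (e.g., in the proof of Proposition~\ref{distM1}).
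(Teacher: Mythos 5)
Your proposal is correct and follows essentially the same route as the paper: Nash inequality for the simple random walk on $\Z^d$, transferred to $\mfk{E}$ via the domination (\ref{compDirichlet}) coming from $\tau_x \ge 1$, and then the standard Nash/Carlen--Kusuoka--Stroock argument to deduce the on-diagonal bound (the paper simply cites the references for this last implication, whereas you write out the differential inequality and the Cauchy--Schwarz reduction to the diagonal explicitly).
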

\begin{proof}
Using \cite[Proposition 14.1]{woe} together with \cite[Corollary 4.12]{woe}, one knows that a Nash inequality holds for the simple random walk on $\Z^d$, in the sense that there exists $C > 0$ such that for any function $f$,
$$
\| f \|_2^{2+4/d} \le C_1 \mfk{E}^\circ(f,f) \| f \|_1^{4/d}.
$$
By~(\ref{compDirichlet}), the inequality is preserved if one changes $\mfk{E}^\circ$ by $\mfk{E}$. From the Nash inequality, one deduces the announced claim, following the argument of \cite{nash}, or equivalently \cite[Theorem 2.1]{cks}.
\end{proof}

\noindent \textbf{Acknowledgments.} The author would like to thank Pierre Mathieu for many insightful discussions about this work as well as detailed comments on earlier drafts.


\begin{thebibliography}{99}

\bibitem[B\v{C}09]{barcer} M.~Barlow, J.~\v{C}ern\'y. Convergence to fractional kinetics for random walks associated with unbounded conductances. Preprint (2009).

\bibitem[BD09]{bardeu} M.~Barlow, J.-D.~Deuschel. Invariance principle for the random conductance model with unbounded conductances. To appear in \emph{Ann. Probab}.

\bibitem[BK65]{BK} L.E.~Baum, M.~Katz. Convergence rates in the law of large numbers. \emph{Trans. Amer. Math. Soc.} \textbf{120} (1), 108-123 (1965).

\bibitem[B\v{C}05]{dim1} G.~Ben Arous, J.~\v{C}ern\'y. Bouchaud's model exhibits two different aging regimes in dimension one. \emph{Ann. Appl. Probab.} \textbf{15} (2), 1161-1192 (2005).

\bibitem[B\v{C}06]{rev} G.~Ben Arous, J.~\v{C}ern\'y. Dynamics of trap models. \emph{Les Houches summer school lecture notes}, Elsevier (2006).

\bibitem[B\v{C}07]{dim2} G.~Ben Arous, J.~\v{C}ern\'y. Scaling limit for trap models on $\Z^d$. \emph{Ann. Probab.} \textbf{35} (6), 2356-2384 (2007).

\bibitem[B\v{C}M06]{bcm} G.~Ben Arous, J.~\v{C}ern\'y, T.~Mountford. Aging in two-dimensional Bouchaud's model. \emph{Probab. Theory Related Fields} \textbf{134} (1), 1-43 (2006). 

\bibitem[BB03]{bb} E.~Bertin, J.-P.~Bouchaud. Subdiffusion and localization in the one-dimensional trap model. \emph{Phys. Rev. E} \textbf{67}, 026128 (2003).

\bibitem[Be]{bertoin} J.~Bertoin. \emph{L\'evy processes}. Cambridge tracts in mathematics \textbf{121}, Cambridge University Press (1996).

\bibitem[Bi]{bill} P.~Billingsley. \emph{Convergence of probability measures}. Second edition. Wiley series in probability and statistics, John Wiley \& Sons, Inc. (1968).


\bibitem[BS02]{bs} E.~Bolthausen, A.-S.~Sznitman. On the static and dynamic points of view for certain random walks in random environment. \emph{Methods Appl. Anal.} \textbf{9} (3), 345-376 (2002).


\bibitem[Bo92]{bou} J.-P.~Bouchaud. Weak ergodicity breaking and aging in disordered systems. \emph{J. Phys. I} (France) \textbf{2}, 1705-1713 (1992).

\bibitem[BCKM97]{bckm} J.-P.~Bouchaud, L.~Cugliandolo, J.~Kurchan, M.~M\'ezard. Out of equilibrium dynamics in spin-glasses and other glassy systems. \emph{Spin glasses and random fields}. Series on directions in condensed matter physics \textbf{12}, A.~P.~Young Ed., World scientific (1997).

\bibitem[CKS87]{cks} E.A.~Carlen, S.~Kusuoka, D.W.~Stroock. Upper bounds for symmetric Markov transition functions. \emph{Ann. Inst. H.~Poincar\'e Probab. Statist.} \textbf{23} (S2), 245-287 (1987).



\bibitem[DOL85]{domin} C.~De Dominicis, H.~Orland, F.~Lain\'ee. Stretched exponential relaxation in systems with random free energies. \emph{J. Physique Lett.} \textbf{46}, L463-L466 (1985).

\bibitem[DFGW89]{masi} A.~De Masi, P.A.~Ferrari, S.~Goldstein, W.D.~Wick. An invariance principle for reversible Markov processes. Applications to random motions in random environments. \emph{J. Statist. Phys.} \textbf{55} (3-4), 787-855 (1989).

\bibitem[FIN02]{fin} L.R.G.~Fontes, M.~Isopi, C.M.~Newman. Random walks with strongly inhomogeneous rates and singular diffusions: convergence, localization and aging in one dimension. \emph{Ann. Probab.} \textbf{30} (2), 579-604 (2002).


\bibitem[FM08]{fm08} L.R.G.~Fontes, P.~Mathieu. $K$-processes, scaling limit and aging for the trap model in the complete graph. \emph{Ann. Probab.} \textbf{36} (4), 1322-1358 (2008).

\bibitem[FMV]{fmv} L.R.G.~Fontes, P.~Mathieu, M.~Vachkovskaia. On the dynamics of trap models in $\Z^d$. In preparation.

\bibitem[Ki68]{kingman} J.F.C.~Kingman. The ergodic theory of subadditive stochastic processes. \emph{J. Roy. Statist. Soc. Ser. B} \textbf{30}, 499-510 (1968).


\bibitem[LSNB83]{uppsala} L.~Lundgren, P.~Svedlindh, P.~Nordblad, O.~Beckman. Dynamics of the relaxation-time spectrum in a CuMn spin-glass. \emph{Phys. Rev. Lett.} \textbf{51}, 911-914 (1983).

\bibitem[LP]{trees} R.~Lyons, with Y.~Peres. \emph{Probability on trees and networks}. Cambridge University Press, in preparation. Current version available at \url{http://mypage.iu.edu/~rdlyons/}.


\bibitem[MPV]{mezparvir} M.~M\'ezard, G.~Parisi, M.A.~Virasoro. \emph{Spin glass theory and beyond}. World scientific lecture notes in physics \textbf{9} (1987).

\bibitem[MB96]{mb} C.~Monthus, J.-P.~Bouchaud. Models of traps and glass phenomenology. \emph{J. Phys. A: Math. Gen.} \textbf{29}, 3847-3869 (1996).

\bibitem[Mo09]{vardecay} J.-C.~Mourrat. Variance decay for functionals of the environment viewed by the particle. Preprint, arXiv:0902.0204v3 (2009).


\bibitem[Na58]{nash} J.~Nash. Continuity of solutions of parabolic and elliptic equations. \emph{Amer. J. Math.} \textbf{80}, 931-954 (1958).


\bibitem[RMB00]{rmb} B.~Rinn, P.~Maass, J.-P.~Bouchaud. Multiple scaling regimes in simple aging models. \emph{Phys. Rev. Lett.} \textbf{84} (23), 5403-5406 (2000).

\bibitem[RMB01]{rmb01} B.~Rinn, P.~Maass, J.-P.~Bouchaud. Hopping in the glass configuration space: subaging and generalized scaling laws. \emph{Phys. Rev. B} \textbf{64}, 104417 (2001).



\bibitem[VHOBC97]{vhobc} E.~Vincent, J.~Hammann, M.~Ocio, J.-P.~Bouchaud, L.~Cugliandolo. Slow Dynamics and Aging in Spin Glasses. \emph{Lecture notes in physics} \textbf{492}, 184-219 (1997).

\bibitem[Wh]{whitt} W.~Whitt. \emph{Stochastic-process limits}. Springer series in operations research, Springer-Verlag (2002).

\bibitem[Wo]{woe} W.~Woess. \emph{Random walks on infinite graphs and groups}. Cambridge tracts in mathematics \textbf{138}, Cambridge University Press (2000).

\bibitem[Za02]{zas} G.M.~Zaslavsky. Chaos, fractional kinetics, and anomalous transport. \emph{Phys. Rep.} \textbf{371} (6), 461-580 (2002).

\end{thebibliography}
\end{document}